\def    \C      {{\mathbb C}}
\def    \R      {{\mathbb R}}
\def \Z {{\mathbb Z}}
\def \SS {{\mathbb S}}
\newcommand{\xx}{\mathbf{x}}
\newcommand{\yy}{\mathbf{y}}
\renewcommand{\epsilon}{\varepsilon}
\newtheorem{theorem}{Theorem}[section]
\newtheorem{lemma}[theorem]{Lemma}
\newtheorem{prop}[theorem]{Proposition}
\newtheorem{rmk}[theorem]{Remark}
\title{Symplectic embeddings into disk cotangent bundles}
\date{}
\author{Brayan Ferreira and Vinicius G. B. Ramos}
\begin{document}

\maketitle

\begin{abstract}
In this paper, we compute the embedded contact homology (ECH) capacities of the disk cotangent bundles $D^*S^2$ and $D^*\R P^2$. We also find sharp symplectic embeddings into these domains. In particular, we compute their Gromov widths. In order to do that, we explicitly calculate the ECH chain complexes of $S^*S^2$ and $S^* \R P^2$ using a direct limit argument on the action inspired by Bourgeois's Morse--Bott approach and ideas from Nelson--Weiler's work on the ECH of prequantization bundles. Moreover, we use integrable systems techniques to find explicit symplectic embeddings. In particular, we prove that the disk cotangent bundles of a hemisphere and of a punctured sphere are symplectomorphic to an open ball and a symplectic bidisk, respectively.
\end{abstract}

\section{Introduction}
The study of symplectic embeddings is central in symplectic topology. Much is known about embeddings from and into four-dimensional toric domains, see \cite{concaveconvex,schlenk}. In \cite{bidisk}, the second author studies symplectic embeddings from and into the lagrangian bidisk $D^2\times D^2$. In particular, it is shown that $D^2\times D^2$ is symplectomorphic to a toric domain which allows one to use the ECH machinery developped in \cite{hutchings2011quantitative,concaveconvex}.

An important example of a symplectic manifold is the cotangent bundle $T^*\Sigma$ of a surface $\Sigma$ equipped with the canonical symplectic form. After a choice of a Riemannian metric on $\Sigma$, one can look at the disk cotangent bundle $D^* \Sigma\subset T^* \Sigma$, which is the subset of the covectors $p\in T^*\Sigma$ such that $\Vert p\Vert\le 1$. The goal of this article is to study some symplectic embedding problems into $D^*S^2$ and $D^*\R P^2$, where $S^2$ and $\R P^2$ are equipped with the standard round Riemannian metrics. The main result is the calculation of the Gromov widths of these disk cotangent bundles.

If $(X_1,\omega_1)$ and $(X_2,\omega_2)$ are symplectic manifolds, we write $(X_1,\omega_1)\hookrightarrow (X_2,\omega_2)$ whenever there exists a symplectic embedding $\varphi:X_1\to X_2$ such that $\varphi^*\omega_2=\omega_1$. For $a>0$, let $B(a)$ denote the four-dimensional ball of capacity $a$, i.e., 
\[B(a)=\left\{z\in\R^4\mid \pi\Vert z\Vert^2\le a\right\},\]
endowed with the standard symplectic structure $\omega_0$.
Given a symplectic manifold $(X,\omega)$, its Gromov width $c_{Gr}(X,\omega)$ is defined to be the supremum of $a$ such that $(B(a),\omega_0)\hookrightarrow (X,\omega)$.
For $a,b>0$, the ellipsoid $E(a,b)$ and the symplectic polydisk $P(a,b)$ are defined as
\begin{equation}
\label{def:ebp}
\begin{aligned}
 E(a,b) &= \left\{(z_1,z_2) \in \C^2=\R^4 \mid \left(\frac{\pi|z_1|^2}{a} + \frac{\pi |z_2|^2}{b}\right) \le 1\right\},\\
 P(a,b)&=\left\{(z_1,z_2)\in\C^2=\R^4 \mid \pi|z_1|^2<a,\pi|z_2|^2<b\right\}.\\ 
\end{aligned}\end{equation}
We denote by $\mathrm{int} X$ the interior of a set $X\subset \R^4$. We now state our main result.
\begin{theorem}\label{thm:embedding}
Let $\omega_0$ be the standard symplectic form on $\R^4$ and let $\omega_{can}$ be the canonical symplectic form on a cotangent bundle. Then
\[c_{Gr}(D^* S^2,\omega_{can})=c_{Gr}(D^* \R P^2,\omega_{can})=2\pi.\] Moreover,
\begin{enumerate}[label=(\roman*)]
\item $(\mathrm{int}B(2\pi),\omega_0)\hookrightarrow (D^* S^2,\omega_{can})$,
\item $(\mathrm{int}B(2\pi),\omega_0)\hookrightarrow (D^* \R P^2,\omega_{can})$,
\item $(\mathrm{int}E(2\pi,4\pi),\omega_0)\hookrightarrow (D^* S^2,\omega_{can})$,
\item $(\mathrm{int}P(2\pi,2\pi),\omega_0)\hookrightarrow (D^* S^2,\omega_{can})$.
\end{enumerate}
\end{theorem}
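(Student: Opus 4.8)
The plan is to derive the two inequalities $c_{Gr}\le 2\pi$ and $c_{Gr}\ge 2\pi$ from independent sources: the upper bound from ECH capacities, and the lower bound together with the explicit embeddings (i)--(iv) from the integrable geodesic flow. The upper bound is the ``obstruction'' half and the lower bound is the ``construction'' half; the Gromov--width statement then follows by combining them, since (i) and (ii) already exhibit embedded balls of capacity arbitrarily close to $2\pi$.

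For the upper bound I would invoke the ECH capacity computations of this paper; concretely, all that is needed here is that the first ECH capacity of each domain equals $2\pi$. Since the ECH capacities $c_k$ are monotone under symplectic embeddings and $c_1(B(a))=a$, any symplectic embedding $B(a)\hookrightarrow D^*S^2$ forces $a=c_1(B(a))\le c_1(D^*S^2)=2\pi$, and likewise for $D^*\R P^2$. Taking the supremum over admissible $a$ gives $c_{Gr}(D^*S^2)\le 2\pi$ and $c_{Gr}(D^*\R P^2)\le 2\pi$.

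For the lower bound and the embeddings, the key tool is the integrable system on $T^*S^2\setminus S^2$ whose commuting Hamiltonians are $H=\Vert p\Vert$, generating the unit-speed geodesic flow, and $L=p_\phi$, generating rotation about an axis. Because the round metric has all geodesics closed of length $2\pi$, the flow of $H$ is $2\pi$-periodic, and separating variables in $\theta$ gives action variables $(L,\Vert p\Vert-|L|)$; thus, away from the singular fibers, $D^*S^2$ is a semitoric system over the triangle with vertices $(\pm 2\pi,0)$ and $(0,2\pi)$ in the standard normalization, consistent with $\mathrm{Vol}(D^*S^2)=4\pi^2$. I would use this model to identify the distinguished subdomains appearing in the statement. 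First, I expect to produce an explicit symplectomorphism $D^*(\text{open hemisphere})\cong\mathrm{int}\,B(2\pi)$ (the volumes already agree, both being $2\pi^2$); since an open hemisphere embeds isometrically into both $S^2$ and $\R P^2$ as a fundamental domain, composing with the tautological inclusion $D^*(\text{hemisphere})\hookrightarrow D^*S^2$ (resp.\ $D^*\R P^2$) yields (i) and (ii), hence $c_{Gr}\ge 2\pi$ in both cases; in the $\R P^2$ case this embedding is even volume-filling, as $\mathrm{Vol}(D^*\R P^2)=2\pi^2$. Next, I would identify $D^*(S^2\setminus\{\mathrm{pt}\})$---the complement of a single cotangent fiber, of full volume $4\pi^2$---with $\mathrm{int}\,P(2\pi,2\pi)$, giving (iv), and read off the ellipsoid embedding (iii) from the same base triangle after the integral-affine unfolding that straightens the semitoric monodromy along $L=0$, comparing the resulting convex toric region with the triangle of $E(2\pi,4\pi)$ (note $\mathrm{Vol}\,E(2\pi,4\pi)=4\pi^2$ as well, so (iii) and (iv) are volume-filling and therefore optimal).

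The main obstacle will be the two explicit symplectomorphisms in the previous paragraph. The difficulty is that $D^*S^2$ is not a genuine toric domain: the zero section is a Lagrangian sphere and there is nontrivial affine monodromy (the kink of the action $\Vert p\Vert-|L|$ along the meridian geodesics $L=0$), so one cannot simply restrict the moment map. Moreover, the hemisphere disk bundle is not invariant under the geodesic flow, so its identification with a ball is not a moment-map restriction and must be built by hand---through action--angle coordinates adapted to the region of interest, or via symplectic reduction by the rotation $S^1$-action followed by a Traynor-type toric reconstruction. Matching the boundaries and the singular loci (the cotangent fibers over the poles and the zero section) so that the maps are honest symplectomorphisms onto the \emph{open} toric domains is where the real work lies; the $\R P^2$ case additionally requires tracking the antipodal quotient, which halves the geodesic length to $\pi$ while leaving the relevant widths unchanged.
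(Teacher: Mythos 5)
Your construction half is essentially the paper's: the paper proves (via the integrable system $(H,J)$ with $J=\xx\times\yy$, action--angle coordinates, and Eliasson's normal form) that $D^*(S^2\setminus\{q\})\cong \mathrm{int}\,P(2\pi,2\pi)$ and $D^*(\text{open hemisphere})\cong \mathrm{int}\,B(2\pi)$, and then gets (i), (ii), (iv) exactly as you propose, including the observation that an open hemisphere embeds isometrically in $\R P^2$ and that the ball fills $D^*\R P^2$. The one divergence is (iii): the paper does not attempt an affine-unfolding argument but simply cites Frenkel--M\"uller for $\mathrm{int}\,E(2\pi,4\pi)\hookrightarrow \mathrm{int}\,P(2\pi,2\pi)$ and composes with (iv); your monodromy-straightening suggestion is vague and unnecessary given that known embedding.

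The genuine gap is in your obstruction half: your claim that ``the first ECH capacity of each domain equals $2\pi$'' is false. By the paper's Theorem \ref{thm:capacities}, $(c_k(D^*S^2))_k=2\pi M_2(N(1,1))=(0,4\pi,4\pi,4\pi,8\pi,\ldots)$ and $(c_k(D^*\R P^2))_k=\pi M_4(N(1,1))=(0,4\pi,4\pi,4\pi,4\pi,4\pi,8\pi,\ldots)$, so $c_1=4\pi$ in both cases. The geometric reason is that simple closed geodesics are not nullhomologous in $S^*S^2\cong \R P^3$ (resp.\ $S^*\R P^2\cong L(4,1)$): the shortest orbit sets in the class $\Gamma=0$, which is the class relevant to ECH capacities, are doubled (resp.\ quadrupled) geodesics of action $4\pi$. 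Hence monotonicity applied to $c_1$ only yields $a\le 4\pi$, which does not pin down the Gromov width; indeed $c_1$ is not sharp here. The paper instead uses a higher capacity for $S^2$: if $B(a)\hookrightarrow D^*S^2$ then $2a=c_3(B(a))\le c_3(D^*S^2)=4\pi$, since $c_3(B(a))=a\,N(1,1)_3=2a$, giving $a\le 2\pi$; and for $\R P^2$ it needs no capacity at all, because the embedding in (ii) is volume filling ($\mathrm{Vol}(B(2\pi))=2\pi^2=\mathrm{Vol}(D^*\R P^2)$), so the volume obstruction gives $c_{Gr}(D^*\R P^2)=2\pi$ directly (alternatively $c_5$ would work). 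Replace your $c_1$ argument with one of these and the proof closes.
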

\begin{rmk}
The symplectic embeddings in (ii), (iii) and (iv) above are volume filling. We also note that since there is no closed exact lagrangian submanifold in $\C^2$ \cite{gromov1985pseudo}, embeddings in the converse directions do not exist.
\end{rmk}

The proof of Theorem \ref{thm:embedding} has two parts. The first one is the computation of ECH capacities and the second one is the proof that full measure subsets of $D^* S^2$ and $D^* \R P^2$ are symplectomrphic to an open polydisk and an open ball, respectively. To explain the first part, recall that ECH capacities are a nondecreasing sequence of non-normalized symplectic capacities for four-dimensional symplectic manifolds. In other words, given a four-dimensional $(X,\omega)$, there exists a sequence
\[0=c_0(X,\omega)\le c_1(X,\omega)\le c_2(X,\omega)\le \dots\le \infty.\]
These satisfy the usual homogeneity, monotonicity and non-triviality conditions. Moreover, they have been computed for many toric domains in $\R^4$, see \cite{hutchings2011quantitative,concave}. They have been shown to be sharp for many symplectic embedding problems, e.g. for embedding concave into convex toric domains, see \cite{concaveconvex}.

We now state the main result that is used to prove Theorem \ref{thm:embedding}. Let $N(a,b)$ denote the sequence of all nonnegative integer linear combinations of $a$ and $b$, arranged in nondecreasing order, and indexed starting at $0$. Hutchings has shown that the sequence of ECH capacities of $E(a,b)$ agrees with the sequence $N(a,b)$ in \cite{hutchings2011quantitative}. If $\mathcal{S}$ is a sequence of integers and $j\in\Z$, we denote by $M_j(\mathcal{S})$ the subsequence of $\mathcal{S}$ of the multiples of $j$, in the order that they appear.
\begin{theorem}\label{thm:capacities}
The ECH capacities of $D^*S^2$ and $D^* \R P^2$ are given by
\begin{enumerate}[label= (\alph*)]
\item $(c_k(D^*S^2,\omega_{can}))_k  = 2\pi M_2(N(1,1)) = (0,4\pi,4\pi,4\pi,8\pi,8\pi,8\pi,8\pi,8\pi,\ldots).$ \label{parta}
\item $(c_k(D^*\R P^2,\omega_{can}))_k = \pi M_4(N(1,1))= (0,4\pi,4\pi,4\pi,4\pi,4\pi,8\pi,\ldots)$. \label{partb}
\end{enumerate}
\end{theorem}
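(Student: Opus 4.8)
The plan is to reduce the computation of the ECH capacities to the embedded contact homology of the two contact boundaries, and then to compute that homology by degenerating the cogeodesic flow. The domains $D^*S^2$ and $D^*\R P^2$ are Liouville fillings of $(S^*S^2,\lambda)$ and $(S^*\R P^2,\lambda)$, where $\lambda$ is the restriction of the canonical $1$-form and the associated Reeb flow is the cogeodesic flow of the round metric. For a Liouville filling the ECH capacities are read off from the boundary: there is a distinguished sequence of classes $\sigma_0=[\emptyset],\sigma_1,\sigma_2,\dots$ in $ECH(\partial X)$ with $U\sigma_{k+1}=\sigma_k$, and $c_k(X,\omega_{can})$ equals the minimal action of a cycle representing $\sigma_k$ in the filtered complex. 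It therefore suffices to determine $ECH(S^*S^2,\lambda)$ and $ECH(S^*\R P^2,\lambda)$ as $\Z[U]$-modules, together with their action filtrations, and to locate the tower through $\emptyset$.

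The round metric makes $\lambda$ Morse--Bott: every cogeodesic is closed with period equal to the length of the underlying geodesic, and the orbits foliate $S^*S^2\cong \R P^3$ and $S^*\R P^2\cong L(4,1)$ as Seifert circle bundles over the space of geodesics, which is $S^2$ in both cases. Following Bourgeois's Morse--Bott scheme and the prequantization computations of Nelson--Weiler, I would perturb $\lambda$ by a Morse function on the orbit space, so that each Morse--Bott family, together with each covering multiplicity, contributes explicit nondegenerate Reeb orbits sitting over the critical points. The ECH generators are the orbit sets built from these, graded by their $H_1$-class and the ECH index and filtered by action, the action of a degree-$d$ orbit set being approximately $d$ times the fiber period. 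To control the perturbation I would run a direct limit over action windows $[0,L]$: within each window only finitely many total multiplicities occur, the filtered complex stabilizes, and letting $L\to\infty$ recovers the full homology.

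The next step is to compute the differential together with the $U$-map and to locate the distinguished tower. The ECH differential counts $I=1$ currents in the symplectization; in the Morse--Bott/prequantization model these reduce to gradient trajectories on $S^2$ together with fibered contributions, which makes each $H_1$-summand computable. The crucial structural input is that the class of a single fiber orbit generates $H_1(S^*S^2)=\Z/2$, respectively $H_1(S^*\R P^2)=\Z/4$, so that a degree-$d$ orbit set represents $d$ times this generator; since the tower through $\emptyset$ stays in the trivial class, it is supported only in degrees divisible by the Euler number $e$ of the fibration, namely $e=2$ for $S^2$ and $e=4$ for $\R P^2$. Informally, $\R P^3$ and $L(4,1)$ are the quotients $S^3/(\Z/2)$ and $S^3/(\Z/4)$, and retaining the trivial-$H_1$ part amounts to keeping the degrees divisible by $e$. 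Evaluating the ECH index and the action---each fiber orbit contributing its period, $2\pi$ for the round $S^2$ and $\pi$ for the round $\R P^2$---and comparing the resulting degree-count with Hutchings' identity $c_k(E(a,b))=N(a,b)_k$, one finds that the tower realizes exactly the terms of $N(1,1)$ divisible by $e$. Multiplying by the fiber period yields $2\pi M_2(N(1,1))$ for $D^*S^2$ and $\pi M_4(N(1,1))$ for $D^*\R P^2$, as claimed.

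The main obstacle is the computation of the homology itself: proving that the perturbed, direct-limit complex genuinely computes $ECH$ and that its differential is the one predicted by the Morse--Bott model. This demands the transversality and gluing analysis for the degenerate cogeodesic flow, a count ruling out unexpected $I=1$ currents between the perturbed generators, and an exact evaluation of the ECH index---including the Conley--Zehnder contributions of the iterated fiber orbits and the $H_1$-splitting---so that the tower lands in the correct gradings. Getting this index bookkeeping and the action normalization precisely right, so that the coefficients $2\pi$ and $\pi$ and the selection functions $M_2$ and $M_4$ emerge, is where the real work lies; once the tower is identified, reading off the capacities $c_k$ is immediate.
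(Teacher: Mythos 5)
Your proposal follows essentially the same route as the paper: a Bourgeois-style Morse--Bott perturbation of the Zoll contact form by a Morse function on the orbit space $S^2$, a direct limit over action windows in the spirit of Nelson--Weiler, restriction to the trivial $H_1$-class (total multiplicity divisible by $2$ for $S^*S^2$, resp.\ $4$ for $S^*\R P^2$), and identification of the $U$-tower through $[\emptyset]$, with the fiber periods $2\pi$ and $\pi$ producing $2\pi M_2(N(1,1))$ and $\pi M_4(N(1,1))$. The paper executes exactly this program, filling in the details you flagged as the real work: explicit Conley--Zehnder/Robbin--Salamon indices via the coverings by $S^3$, self-linking and linking numbers for the index formula, vanishing of the differential by index parity, and the $U$-map count via gradient flow lines and meromorphic $1$-forms on $S^2$ (Riemann--Roch), alongside the Seiberg--Witten comparison.
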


The second part of the proof of Theorem \ref{thm:embedding} is the following result.
\begin{theorem}\label{thm:symplecto}
Let $q\in S^2$. Then $(D^*(S^2\setminus\{q\},\omega_{can})$ is symplectomorphic to $(\mathrm{int}P(2\pi,2\pi),\omega_0)$. Moreover for any open hemisphere $\Sigma\subset S^2$, $(D^*\Sigma,\omega_{can})$ is symplectomorphic to $(\mathrm{int}B(2\pi),\omega_0)$.
\end{theorem}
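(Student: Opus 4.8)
The plan is to exploit the rotational symmetry of the round metric, reduce each problem to a one–dimensional system by symplectic reduction, and read off the answer from an explicit ``area profile.'' Since $O(3)$ acts on $S^2$ transitively on points and on open hemispheres, I may assume the removed point is the north pole and that the hemisphere is centered at the north pole $N$. In geodesic polar coordinates $(\rho,\psi)$ centered at $N$ the round metric is $d\rho^2+\sin^2\rho\,d\psi^2$, so the fiber norm is $|p|^2=p_\rho^2+p_\psi^2/\sin^2\rho$ and the canonical form is $d\rho\wedge dp_\rho+d\psi\wedge dp_\psi$. In both cases rotation about the $N$–axis restricts to a Hamiltonian $S^1$–action on the relevant disk cotangent bundle, with moment map the angular momentum $p_\psi$; this action is free away from the single fixed point $(N,0)$, the zero covector at the center. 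For the hemisphere the base coordinate runs over $\rho\in[0,\pi/2)$, while for the punctured sphere I center instead at the antipode $-q$ of the puncture, so that $\rho\in[0,\pi)$.

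The second step is reduction. For a fixed value $\ell$ of $p_\psi$ the reduced space $X_\ell:=\{p_\psi=\ell\}\cap D^*/S^1$ is a two–dimensional symplectic surface with reduced form $d\rho\wedge dp_\rho$, so its area is $A(\ell)=\iint_{R_\ell}dp_\rho\,d\rho$ over $R_\ell=\{|p_\rho|^2\le 1-\ell^2/\sin^2\rho\}$ intersected with the allowed range of $\rho$. The key computation is the elementary integral, which after the substitution $z=\cos\rho$ reads
\[
\int_0^{z_0}\frac{\sqrt{z_0^2-z^2}}{1-z^2}\,dz=\frac{\pi}{2}\bigl(1-\sqrt{1-z_0^2}\bigr),\qquad z_0=\sqrt{1-\ell^2}.
\]
This yields $A_{\mathrm{hemi}}(\ell)=\pi(1-|\ell|)$ for the hemisphere and, doubling by the symmetry $\rho\leftrightarrow\pi-\rho$, $A_{\mathrm{punct}}(\ell)=2\pi(1-|\ell|)$ for the punctured sphere, with $\ell$ ranging over $(-1,1)$ in both cases. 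As a consistency check, integrating these profiles recovers the symplectic volumes $\pi\cdot\mathrm{Area}(\Sigma)=2\pi^2$ and $\pi\cdot\mathrm{Area}(S^2\setminus\{q\})=4\pi^2$, matching $\mathrm{int}\,B(2\pi)$ and $\mathrm{int}\,P(2\pi,2\pi)$.

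The third step is to recognize these profiles on the target domains. I equip $\mathrm{int}\,B(2\pi)$ and $\mathrm{int}\,P(2\pi,2\pi)$ with the anti–diagonal circle action $(z_1,z_2)\mapsto(e^{it}z_1,e^{-it}z_2)$, which is $2\pi$–periodic, has moment map $\tfrac12(|z_1|^2-|z_2|^2)$ valued in $(-1,1)$, and has the origin as its unique fixed point. A direct computation shows that its reduced spaces have areas $\pi(1-|\ell|)$ for the ball and $2\pi(1-|\ell|)$ for the polydisk, the factor distinguishing the two cases being exactly the factor $2$ in the length of the cross–sections of the simplex versus the square (the apparent extra factor $\tfrac12$ reflects that diagonal and anti–diagonal rotations by $\pi$ coincide). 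Hence, for every $\ell$, the reduced space of $D^*\Sigma$ and that of $B(2\pi)$ (respectively of $D^*(S^2\setminus\{q\})$ and $P(2\pi,2\pi)$) are disks of equal symplectic area, so they are symplectomorphic by Moser's theorem; identifying the moment maps via $p_\psi=\tfrac12(|z_1|^2-|z_2|^2)$, I then aim to assemble these fiberwise symplectomorphisms into a single $S^1$–equivariant symplectomorphism.

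The hard part is precisely this assembly. One must choose the fiberwise maps to depend smoothly on $\ell$, reconstruct the total space as an $S^1$–bundle over the $\ell$–interval with the reduced disks as fibers (trivial because the fibers are contractible), and then match the connection/coupling data so that the full canonical form — not merely the Liouville volume — is preserved; the normalization constants, and in particular the factor of $2$ separating the ball from the polydisk, are pinned down here. The fixed point $(N,0)$ and the value $\ell=0$ require an equivariant Darboux model, and one must take care that the reduced dynamics is \emph{not} periodic (geodesics exit the hemisphere across the equator), so the argument genuinely runs through the reduced spaces and their areas rather than through closed–orbit action variables. I expect the technical core of the proof to be this reconstruction of a global symplectomorphism from the matched reduced data.
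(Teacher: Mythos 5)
Your reduction computations are correct and consistent with the paper's answer: the area profiles $A_{\mathrm{hemi}}(\ell)=\pi(1-|\ell|)$ and $A_{\mathrm{punct}}(\ell)=2\pi(1-|\ell|)$ are exactly the cross-sectional lengths of the moment domains $\Omega_1$ (the triangle giving $B(2\pi)$) and $\Omega_\infty=[0,2\pi)\times[0,2\pi)$ that the paper obtains, and your $\arcsin$-type integral is essentially the $h=1$ slice of the paper's action integral \eqref{eq:int3}. But there is a genuine gap at the decisive step, and you have named it yourself without closing it: matching the reduced areas for every $\ell$ (i.e.\ matching the Duistermaat--Heckman measures), even together with the fixed-point data, does not by itself yield a symplectomorphism of the four-dimensional total spaces. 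Choosing the fiberwise Moser maps smoothly in $\ell$ produces a fiber-preserving diffeomorphism that pulls $\omega_0$ back to a form agreeing with $\omega_{can}$ on the reduced directions, but the two forms can still differ by ``horizontal'' terms encoding the connection of the $S^1$-bundle over the $\ell$-interval; killing those terms is equivalent to constructing a second, globally defined action coordinate with the correct normalization. That is precisely the content you set aside with the remark that the reduced dynamics is not periodic, so your proposal as written establishes equality of invariants that are necessary but not sufficient, and the ``assembly'' paragraph is an expectation rather than an argument. To make it rigorous you would need either a Karshon-type uniqueness theorem for Hamiltonian $S^1$-spaces extended to these noncompact settings (with normal forms at the fixed point $(N,0)$ and at the degenerate levels $\ell=\pm1$), or the action-angle construction itself.

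This is where the paper's route differs and why it works: instead of a single $S^1$-symmetry, it upgrades to a genuine integrable system by perturbing the kinetic Hamiltonian with the barrier term $\epsilon/(C-|\xx|^2)$, which makes the joint level sets of $(H^{\epsilon,C},J)$ compact and connected; Arnold--Liouville then produces action-angle coordinates on the regular part, the second action being the loop integral $\oint p_r\,dr$ as a function of $(h,j)$ (not just at $h=1$), Eliasson's normal form extends the toric identification across the elliptic singular fibers, and an exhaustion $\epsilon\to 0$ (with $C=1/\sqrt{\epsilon}$ or $C=1$) recovers $W_\infty\cong X_{\Omega_\infty}$ and $W_1\cong X_{\Omega_1}$. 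In short, the paper manufactures exactly the global second action coordinate whose absence is the hole in your argument. Your computation of the profiles would become a complete proof only after supplying that construction (or an applicable classification theorem), including the noncompactness/exhaustion issues you currently pass over.
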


Using Theorems \ref{thm:capacities} and \ref{thm:symplecto}, we can now prove Theorem \ref{thm:embedding}.

\begin{proof}[Proof of Theorem \ref{thm:embedding}]
We first observe that (i), (ii) and (iv) follow directly from Theorem \ref{thm:symplecto}. Moreover, it follows from  work of Frenkel--M\"{u}ller \cite{frenkelmuller} that \[(\mathrm{int}E(2\pi,4\pi),\omega_0)\hookrightarrow (\mathrm{int} P(2\pi,2\pi),\omega_0).\] Hence (iii) also holds. 

Since the embedding $(\mathrm{int} B(2\pi),\omega_0)\hookrightarrow (D^*\R P^2,\omega_{can})$ is volume filling, it follows that $c_{Gr}(D^*\R P^2,\omega_{can})=2\pi$. From Theorem \ref{thm:capacities}(a) if $(B(a),\omega_0)\hookrightarrow (D^*S^2,\omega_{can})$, then \[2a=c_3(B(a),\omega_0)\le c_3(D^*S^2,\omega_{can})=4\pi.\]
So $a\le 2\pi$. Therefore $c_{Gr}(D^*S^2,\omega_{can})=2\pi$. 
\end{proof}

\begin{rmk}
Felix Schlenk pointed out to us that the Gromov width of $D^* S^2$ can be alternatively computed by combining results from \cite{oakleyusher} and \cite{lagbarriers}.
\end{rmk}

\noindent{\bf Structure of the paper:} In Section 2, we recall the definition of ECH and introduce our strategy to prove Theorem \ref{thm:capacities}. In Section 3, we prove Theorem \ref{thm:capacities}(a) and in Section 4, we prove Theorem \ref{thm:capacities}(b). Lastly, in Section 5, we use integrable systems to prove Theorem \ref{thm:symplecto}.

\vskip 8pt

\noindent{{\bf Acknowledgments:}}
We would like to thank Jo\'{e} Brendel, Jo Nelson and Felix Schlenk for helpful conversations. The second author is partially supported by a grant from the Serrapilheira Institute, the FAPERJ grant Jovem Cientista do Nosso Estado and the CNPq grants~407510/2018-4 and~306405/2020-2.

\section{Foundations of embedded contact homology}
Let $Y$ be a closed $3$-manifold equipped with a contact form $\lambda$, i.e., $\lambda$ is an $1$-form such that $\lambda \wedge d\lambda>0$, and let $\xi = \ker \lambda$ be the contact structure. The Reeb vector field $R_\lambda$ is the unique vector field in $Y$ satisfying:
$$i_{R_\lambda}d\lambda = 0 \quad \text{and} \quad \lambda(R_\lambda) = 1.$$
Let $\varphi_t$ the Reeb flow, i.e., the flow associated to $R_\lambda$. The closed trajectories of $\varphi_t$ are called Reeb orbits. A Reeb orbit $\gamma \colon \R/T\Z \to Y$ is nondegenerate when the linearized return map $P_\gamma := d\varphi_T|_\xi \colon \xi_{\gamma(0)} \to \xi_{\gamma(0)}$ does not admit $1$ as an eigenvalue. The contact form $\lambda$ is nondegenerate if all Reeb orbits are nondegenerate. Suppose that $\lambda$ is nondegenerate. Since $P_\gamma$ is a linear symplectomorphism, it turns out that the Reeb vector field admits three types of closed orbits:
\begin{enumerate}
\item \emph{Elliptic}: orbits $\gamma$ such that the eigenvalues of the linearized return map $P_\gamma$ are norm one complex numbers.
\item \emph{Positive hyperbolic}: when eigenvalues of $P_\gamma$ are positive real numbers.
\item \emph{Negative hyperbolic}: when eigenvalues of $P_\gamma$ are negative real numbers.
\end{enumerate}
An orbit set $\alpha = \{(\alpha_i,m_i)\}$ is a finite set, where $\alpha_i$ are distinct embedded Reeb orbits on $Y$ and $m_i$ are positive integers. An admissible orbit set is an orbit set such that $m_i = 1$ whenever $\alpha_i$ is hyperbolic. We denote the homology class of an orbit set $\alpha$ by
$$[\alpha] = \sum_i m_i [\alpha_i] \in H_1(Y).$$
For a fixed $\Gamma \in H_1(Y)$, and a generic symplectization-admissible almost complex structure $J$ on $\R \times Y$, the chain complex $ECC_*(Y,\lambda, \Gamma, J)$ is the $\Z_2$-vector space generated by the admissible orbit sets in homology class $\Gamma$, and its differential counts certain $J$-holomorphic curves in $\R \times Y$, as explained below. This chain complex gives rise to the embedded contact homology $ECH_*(Y,\lambda, \Gamma, J)$. Taubes proved in \cite{taubes2010embedded} that the latter is isomorphic to the \emph{from} version of Seiberg-Witten Floer cohomology $\widehat{HM}^{-*}(Y,\mathfrak{s}_\xi + PD(\Gamma))$. In particular, $ECH_*(Y,\lambda, \Gamma, J)$ does not depend on $\lambda$ or $J$, and so we write $ECH_*(Y,\xi,\Gamma)$. There is a refinement of ECH: the filtered ECH, whose definition we quickly recall. \\
The symplectic action of an orbit set $\alpha = \{(\alpha_i,m_i)\}.$ is defined by
$$\mathcal{A}(\alpha) = \sum_i m_i \int_{\alpha_i} \lambda.$$
Since the almost complex structure is admissible on the symplectization, the restriction of $d\lambda$ to any $J$-holomorphic curve in $\R \times Y$ is pointwise nonnegative. Therefore, Stokes' theorem yields that the differential decreases the symplectic action, and so there is a subcomplex $ECC^L(Y,\lambda,\Gamma,J)$ which is the span of chain complex generators $\alpha$ such that $\mathcal{A}(\alpha)<L$. The homology of this subcomplex is the filtered ECH, denoted by $ECH^L(Y,\lambda,\Gamma)$. We summarize some concepts and properties of ECH ingredients in next three sections. A nice introduction to this theory can be found in \cite{hutchings2014lecture}.

\subsection{The ECH index}
Let $\mathcal{T}(\gamma)$ be the set of homotopy classes of symplectic trivializations of $\xi|_\gamma$. This is an affine space over $\Z$: given two trivializations $\tau_1, \tau_2 \colon \xi|_\gamma \to S^1 \times \R^2$, we denote by $\tau_1 - \tau_2$ the degree of $\tau_1 \circ \tau_2^{-1}\colon S^1 \to Sp(2,\R) \equiv S^1$. Let $\alpha=\{(\alpha_i,m_i)\}, \beta=\{(\beta_j,n_j)\}$ be two orbit sets. If $\tau \in \mathcal{T}(\alpha,\beta)\colon= \Pi_i \mathcal{T}(\alpha_i) \times \Pi_j \mathcal{T}(\beta_j)$, the elements of $\mathcal{T}(\alpha_i)$ and $\mathcal{T}(\beta_j)$ are denoted by $\tau_i^+$ and $\tau_j^-$.
\subsubsection{Conley-Zehnder index} Let $\gamma \colon \R/T\Z \to Y$ be a parametrized Reeb orbit, and $\tau \in \mathcal{T}(\gamma)$. If $\varphi_t$ is the Reeb flow, the derivative
$$d\varphi_t \colon T_{\gamma(0)}Y \to T_{\gamma(t)}Y$$
restricts to a linear symplectomorphism $\psi_t \colon \xi_{\gamma(0)} \to \xi_{\gamma(t)}$. Using the trivialization $\tau$, the latter can be viewed as a $2 \times 2$ symplectic matrix for each $t$. Since $\lambda$ is nondegenerate, this gives rise to a path of symplectic matrices starting at the identity $I_{2 \times 2}$ and ending at the linearized return map $\psi_T = P_\gamma$, which does not have $1$ as an eigenvalue. So the Conley-Zehnder index $CZ_\tau(\gamma) \in \Z$ is defined as the Conley-Zehnder index of the path $\{\psi_t\}_{t \in [0,T]}$. For the definition of this index, see e.g. \cite[\S 2.4]{salamon1999lectures}. It can be explicitly given as follows.\

If $\gamma$ is hyperbolic, the matrices $\psi_t$ rotate the eigenvectors by angle $\pi k$ for some integer $k$ (which is even when $\gamma$ is positive hyperbolic and odd in the negative hyperbolic case), then $CZ_\tau(\gamma) = k.$ When $\gamma$ is elliptic, up to a homotopy, we can assume that $\psi_t$ is a rotation by $2\pi\theta_t$, where $\theta_0 = 0$ and $\theta_t$ is a continuous function of $t \in [0,T]$. Then we call $\theta_T$ by rotation angle of $\gamma$ with respect to $\tau$ and $CZ_\tau(\gamma) = 2\lfloor \theta_T \rfloor + 1$. In particular, if a simple orbit and all its interations have odd Conley-Zehnder index, then it must be elliptic.\

If one changes the trivialization $\tau$, the Conley-Zehnder index changes in the following way:
\begin{equation}\label{czmuda}
CZ_\tau(\gamma^k) - CZ_{\tau^\prime}(\gamma^k) = 2k(\tau - \tau^\prime).
\end{equation}

\subsubsection{Relative first Chern class} We denote by $H_2(Y,\alpha,\beta)$ the affine space over $H_2(Y)$ that consists of $2$-chains $\Sigma$ in $Y$ with
$$\partial \Sigma = \sum_i m_i \alpha_i - \sum_j n_j \beta_j$$
modulo boundaries of $3$-chains. Let $Z \in H_2(Y,\alpha,\beta)$ and $\tau \in \mathcal{T}(\alpha,\beta)$. Given a surface $S$ with boundary and a smooth map $f\colon S \to Y$ representing $Z$, the relative first Chern class $c_\tau(Z) = c_1(\xi|_{f(S)},\tau) \in \Z$ is defined as the signed count of zeros of a generic section $\mathfrak{s}$ of $f^*\xi$ obtained by extending a nonvanishing section of $f^*\xi_{\partial S}$ with is trivial with respect to $\tau$.\

Let $\alpha^\prime$ and $\beta^\prime$ be other two orbit sets. The number $c_\tau$ is linear in the relative homology class, i.e.,
\begin{equation}\label{ctaulinear}
c_\tau(Z+Z^\prime) = c_\tau(Z) + c_\tau(Z^\prime).
\end{equation}
Moreover, if we change the trivialization $\tau$, then
\begin{equation}\label{ctaumuda}
c_\tau(Z) - c_{\tau^\prime}(Z) = \sum_i m_i ({\tau^\prime_i}^+ - \tau_i^+) - \sum_j n_j({\tau^\prime_j}^- - \tau_j^-).
\end{equation}

\subsubsection{Relative intersection number}

Let $\pi_Y\colon \R \times Y \to Y$ denotes the projection and take a smooth map $f \colon S \to [-1,1] \times Y$, where $S$ is a compact oriented surface with boundary, such that $f|_{\partial S}$ consists of positively oriented covers of $\{1\} \times \alpha_i$ with multiplicity $m_i$ and negatively oriented covers of $\{-1\} \times \beta_j$ with multiplicity $n_j$, $\pi_Y \circ f$ represents $Z$, the restriction $f|_{\dot{S}}$ to the interior of $S$ is an embedding, and $f$ is transverse to $\{-1,1\} \times Y$. Such an $f$ is called an admissible representative for $Z \in H_2(Y,\alpha,\beta)$ and we abuse notation denoting this representative as $S$. Furthermore, suppose that $\pi_Y|S$ is an immersion near $\partial S$ and $S$ contains $m_i$ (resp. $n_j$) singly covered circles at $\{1\} \times \alpha_i$ (resp. $\{-1\} \times \beta_j$) such that the $m_i$ (resp. $n_j$) nonvanishing sections of $\xi$ over $\alpha_i$ (resp. $\beta_j$), given by projecting conormal vectors in $S$, are $\tau$-trivial. Moreover, in each fiber of $\xi$ over $\alpha_i$ or $\beta_j$, these sections lie in distinct rays. Then $S$ is a $\tau$-representative. \

Let $\tau \in \mathcal{T}(\alpha \cup \alpha^\prime, \beta \cup \beta^\prime)$. Moreover, let $S$ and $S^\prime$ be $\tau$-representatives of two classes, $Z \in H_2(Y,\alpha,\beta),\ Z^\prime \in H_2(Y,\alpha^\prime,\beta^\prime)$, respectively, such that the projected conormal vectors at the boundary all lie in different rays. Then $Q_\tau(Z,Z^\prime) \in \Z$ is the signed count of (transverse) intersections of $S$ and $S^\prime$ in $(-1,1) \times Y$. We denote write $Q_\tau(Z) := Q_\tau(Z,Z)$ and we note that $Q_\tau$ is quadratic in the following sense
\begin{equation}\label{qtauquadratic}
Q_\tau(Z+Z^\prime) = Q_\tau(Z) + 2Q_\tau(Z,Z^\prime) + Q_\tau(Z^\prime).
\end{equation}
Finally, if $Z,Z^\prime \in H_2(Y,\alpha,\beta)$, changing the trivialization yields
\begin{equation}\label{qtaumuda}
Q_\tau(Z,Z^\prime) - Q_{\tau^\prime}(Z,Z^\prime) = \sum_i m_i^2({\tau_i^\prime}^+ - \tau_i^+) - \sum_j n_j^2 ({\tau_j^\prime}^- - \tau_j^-).
\end{equation}
For these and more about the numbers $CZ_\tau, c_\tau$ and $Q_\tau$, see e.g. \cite{hutchings2002index,hutchings2009embedded}.

\subsubsection{Linking numbers} Given two nullhomologous oriented knots $K,K^\prime \subset Y$, the linking number between them is defined as the intersection number
$$lk(K,K^\prime) = K \cdot S_{K^\prime} \in \Z,$$
where $S_{K^\prime}$ is an embedded Seifert surface for $K^\prime$ transverse to $K$. If $K$ is a transverse knot, i.e., it can be realized as an embedding $\gamma\colon S^1 \to Y$ such that $\gamma^\prime(t) \notin \xi_{\gamma(t)}$ for all $t \in S^1$, there is defined the self-linking number
$$sl(K, S_K) = lk(K, K^\prime) \in \Z,$$
where $K^\prime$ is a parallel copy of $K$ obtained by pushing $K$ in the direction of a non-vanishing section of $\xi|_{S_K}$ and $S_K$ is an embedded Seifert surface for $K$. When the Euler class $e(\xi) \in H^2(Y)$ vanishes, the self-linking number does not depend on $S_K$ and, hence, we write $sl(K)$. We recommend \cite[Chapter 3]{geiges2008introduction} for further details on these and others knots invariants.
\

Let $\alpha = \{(\alpha_i,m_i)\}, \beta = \{(\beta_j,n_j)\}$ be two orbit sets in the homology class $\Gamma$ and $Z \in H_2(Y, \alpha, \beta)$. The ECH index is defined by
\begin{equation}\label{echindex}
I(\alpha, \beta; Z) = c_\tau(Z) + Q_\tau(Z) + CZ^I_\tau(\alpha) - CZ^I_\tau(\beta),
\end{equation}
where $CZ^I_\tau(\alpha) = \sum_i \sum_{k=1}^{m_i} CZ_\tau(\alpha_i^k)$ and similarly for $CZ^I_\tau(\beta)$.
This index has the following properties.

\begin{prop}~\cite[Proposition 1.6]{hutchings2002index} \label{ECHind}
The ECH index satisfies:
\begin{enumerate}[label=\alph*)]
\item (Well defined) $I(\alpha,\beta, Z)$ does not depend on $\tau$ (although each term in the formula does).
\item (Additivity) $I(\alpha,\beta, Z+W) = I(\alpha,\delta, Z) + I(\delta,\beta, W)$, whenever $\delta$ is another orbit set in $\Gamma$, $Z \in H_2(Y,\alpha,\delta)$ and $W \in H_2(Y,\delta,\beta)$. \label{additivity}
\item (Index parity) If $\alpha$ and $\beta$ are chain complex generators, then
$$(-1)^{I(Z)} = \varepsilon(\alpha)\varepsilon(\beta),$$
where $\varepsilon(\alpha)$ denotes $(-1)$ to the number of positive hyperbolic orbits in $\alpha$ and similarly does $\varepsilon(\beta)$.
\item (Index ambiguity Formula) $I(\alpha,\beta, Z) - I(\alpha,\beta, Z^\prime) = \langle c_1(\xi)+2PD(\Gamma), Z-Z^\prime \rangle$, where $c_1(\xi)$ is the first chern class of the vector bundle $\xi$ and $PD$ denotes the Poincaré dual.
\end{enumerate}
\end{prop}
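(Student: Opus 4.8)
The plan is to deduce all four items from the behavior of the three ingredients $c_\tau$, $Q_\tau$ and $CZ_\tau$ under a change of trivialization, recorded in \eqref{czmuda}, \eqref{ctaumuda} and \eqref{qtaumuda}, together with the linearity relation \eqref{ctaulinear}, the quadraticity relation \eqref{qtauquadratic}, and the definition \eqref{echindex}.

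Item (a) is the computational core, and I would attack it first, since the remaining items are cleaner once a trivialization may be chosen freely. Fix two trivializations and set $\delta_i = {\tau_i'}^+ - \tau_i^+$ and $\eta_j = {\tau_j'}^- - \tau_j^-$. Summing the change in the Conley--Zehnder term over the iterates $k = 1,\dots,m_i$ and using \eqref{czmuda} gives a total change $\sum_{k=1}^{m_i} 2k\,\delta_i = m_i(m_i+1)\delta_i$ for each orbit $\alpha_i$, and similarly $n_j(n_j+1)\eta_j$ for $\beta_j$. Comparing this with the changes $-m_i\delta_i + n_j\eta_j$ and $-m_i^2\delta_i + n_j^2\eta_j$ coming from \eqref{ctaumuda} and \eqref{qtaumuda}, the coefficient of $\delta_i$ in $I_{\tau'} - I_\tau$ is $-m_i - m_i^2 + m_i(m_i+1) = 0$, and likewise the coefficient of $\eta_j$ vanishes. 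Hence $I$ does not depend on $\tau$. With (a) in hand, item (d) is immediate: fixing one $\tau$, the Conley--Zehnder terms do not depend on the relative class, so $I(\alpha,\beta;Z) - I(\alpha,\beta;Z') = c_\tau(Z-Z') + [Q_\tau(Z) - Q_\tau(Z')]$ with $Z - Z' \in H_2(Y)$. I would then identify $c_\tau$ on an absolute class with $\langle c_1(\xi), \cdot\rangle$, and, using \eqref{qtauquadratic} and the interpretation of $Q_\tau$ on absolute classes as the intersection pairing, identify the $Q_\tau$ difference with $\langle 2\,PD(\Gamma), Z-Z'\rangle$, yielding the stated formula.

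For item (b) I would again work with a single trivialization defined on all orbits of $\alpha$, $\delta$ and $\beta$. The Conley--Zehnder sum is additive by grouping, and $c_\tau$ is additive by \eqref{ctaulinear}. The only delicate point is $Q_\tau$: taking admissible representatives of $Z$ and $W$ stacked in the disjoint regions $[0,1]\times Y$ and $[-1,0]\times Y$ and meeting only along the shared ends $\delta$, their interiors are disjoint, so the interior intersection count defining $Q_\tau(Z+W)$ splits as $Q_\tau(Z) + Q_\tau(W)$; additivity then follows once one checks that the framings at $\delta$ introduce no extra count.

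The main obstacle is item (c). Reducing \eqref{echindex} modulo $2$ and recording that $CZ_\tau(\gamma^k)$ is odd for every iterate of an elliptic orbit, even for a positive hyperbolic orbit, and odd for a negative hyperbolic orbit, a short count gives $CZ^I_\tau(\alpha) \equiv \sum_i m_i + h_+(\alpha) \pmod 2$, where $h_+(\alpha)$ denotes the number of positive hyperbolic orbits in $\alpha$ and we use that these have multiplicity one. The identity $(-1)^{I} = \varepsilon(\alpha)\varepsilon(\beta)$ then reduces to the congruence $c_\tau(Z) + Q_\tau(Z) \equiv \sum_i m_i + \sum_j n_j \pmod 2$. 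I expect the genuine work to be establishing this last congruence: its $\tau$-independence is free, since the coefficients $m_i + m_i^2$ and $n_j + n_j^2$ in \eqref{ctaumuda}--\eqref{qtaumuda} are even, and its $Z$-independence follows because $\langle c_1(\xi), A\rangle = \chi(A)$ is even for every $A \in H_2(Y)$; but pinning down the actual parity requires evaluating $c_\tau + Q_\tau$ on an explicit $\tau$-representative and analyzing the braids formed by the projected conormal framings at the ends.
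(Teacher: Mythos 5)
The paper itself offers no proof of this proposition---it is quoted verbatim from \cite[Proposition 1.6]{hutchings2002index}---so your attempt can only be measured against the standard argument there. For items (a), (b) and (d) your route is essentially that argument and is correct: the cancellation $-m_i - m_i^2 + m_i(m_i+1) = 0$ in (a) is exactly right; in (b) the telescoping of $CZ^I$, the linearity \eqref{ctaulinear}, and the stacked-representative argument for $Q_\tau(Z+W) = Q_\tau(Z)+Q_\tau(W)$ (with the distinct-ray, $\tau$-trivial framing condition ruling out extra boundary intersections) is precisely how Hutchings proves it; and in (d) the decomposition $c_\tau(Z-Z') + 2Q_\tau(Z',A) + Q_\tau(A)$ with $A = Z-Z'\in H_2(Y)$, $Q_\tau(A)=0$ and $2Q_\tau(Z',A) = \langle 2PD(\Gamma), A\rangle$ is the standard identification, though you assert rather than verify the last equality (it follows by pushing a closed surface representing $A$ to a slice near the positive end, where it meets $Z'$ in $[\alpha]\cdot A$ points).

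The genuine gap is item (c), and you name it yourself. Your reduction is correct: mod $2$ one has $CZ^I_\tau(\alpha) \equiv \sum_i m_i + h_+(\alpha)$ for admissible orbit sets, so parity is equivalent to the congruence $c_\tau(Z) + Q_\tau(Z) \equiv \sum_i m_i + \sum_j n_j \pmod 2$; and your two invariance checks are sound (for $Z$-independence the correct statement is $\langle c_1(\xi), A\rangle \equiv \chi(A) \equiv 0 \pmod 2$, via $w_2(\xi) = w_2(TY) = 0$ for an oriented $3$-manifold---your written equality $\langle c_1(\xi),A\rangle = \chi(A)$ is false on the nose, but only the mod $2$ statement is used). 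What is missing is the congruence itself, which invariance alone cannot supply: invariance under changes of $\tau$ and $Z$ reduces nothing unless the value is computed for at least one choice per pair $(\alpha,\beta)$. The standard way to finish is the relative adjunction identity for an embedded $\tau$-representative $S$ of $Z$, namely $c_\tau(Z) = \chi(S) + Q_\tau(Z) + w_\tau(S)$; for a $\tau$-representative the ends consist of $\tau$-trivial parallel strands in distinct rays, so $w_\tau(S) = 0$, whence $c_\tau(Z) + Q_\tau(Z) \equiv \chi(S) \pmod 2$, and $\chi(S) \equiv \#\partial S = \sum_i m_i + \sum_j n_j \pmod 2$. So your ``analyzing the braids'' is exactly the right instinct, but without the adjunction formula (or an equivalent explicit evaluation) item (c) remains unproven in your write-up.
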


\subsection{Differential and grading}\label{grading}
The other important index is the Fredholm index which describes, for a generic almost complex structure $J$, the dimension of the moduli space near a $J$-holomorphic curve $C$, see~\cite[Proposition 3.1]{hutchings2014lecture}. It is defined as
\begin{equation}\label{fredholm}
\text{ind}(C) = -\chi(C) + 2c_\tau(C) + \sum_{i=1}^k CZ_\tau(\gamma_i^+) - \sum_{i=1}^l CZ_\tau(\gamma_i^-),
\end{equation}
where $\chi(C)$ denotes the Euler characteristic of the $J$-holomorphic curve $C$ with $k$ positive ends at Reeb orbits $\gamma_1^+,\ldots, \gamma_k^+$ and $l$ negative ends at Reeb orbits $\gamma_1^-,\ldots, \gamma_l^-$.
\

Furthermore, for a generic almost complex structure $J$, we can understand the behavior of low ECH index $J$-holomorphic currents. Here a trivial cylinder means a cylinder $\R \times \gamma$, where $\gamma$ is a Reeb orbit.

\begin{prop}~\cite[Proposition 3.7]{hutchings2014lecture} \label{lowind}
Suppose $J$ is generic. Let $\alpha$ and $\beta$ be orbit sets and let $\mathcal{C} \in \mathcal{M}(\alpha,\beta)$ be any $J$-holomorphic current in $\R \times Y$, not necessarily somewhere injective. Then
\begin{enumerate}[label=\roman*)]
\item $I(\mathcal{C}) \geq 0$, with equality if and only if $\mathcal{C}$ is a union of trivial cylinders with multiplicities.
\item If $I(\mathcal{C}) = 1$, then $\mathcal{C} = \mathcal{C}_0 \sqcup C_1$, where $I(\mathcal{C}_0) = 0$, and $C_1$ is embedded and has $\text{ind}(C_1) = I(C_1) = 1$.
\item If $I(\mathcal{C}) = 2$, and if $\alpha$ and $\beta$ are chain complex generators, then $\mathcal{C} = \mathcal{C}_0 \sqcup C_2$, where $I(\mathcal{C}_0) = 0$, and $C_2$ is embedded and has $\text{ind}(C_2) = I(C_2) = 2$.
\end{enumerate}
\end{prop}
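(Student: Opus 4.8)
The plan is to reduce all three statements to a single master estimate, the \emph{ECH index inequality}, and then to analyze carefully when it degenerates to an equality. The inequality is first proved for somewhere injective curves and afterwards upgraded to arbitrary $J$-holomorphic currents by decomposing them into their irreducible components.

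First I would establish, for a somewhere injective curve $C \in \mathcal{M}(\alpha,\beta)$, the inequality
\[
\text{ind}(C) \le I(C) - 2\delta(C),
\]
where $\delta(C) \ge 0$ is the algebraic count of singularities and double points of $C$, weighted by local intersection multiplicities. The two inputs are the relative adjunction formula
\[
c_\tau(C) = \chi(C) + Q_\tau(C) + w_\tau(C) - 2\delta(C),
\]
in which $w_\tau(C)$ is the total writhe of the braids traced out by the ends of $C$ near the embedded orbits $\alpha_i,\beta_j$ (computed via the linking and self-linking numbers recalled above), together with the \emph{writhe bound}, which controls each such writhe in terms of the Conley--Zehnder indices of the corresponding orbit and its covers. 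Substituting the writhe bound into the adjunction formula and comparing with the definition \eqref{fredholm} of $\text{ind}(C)$ yields the displayed inequality; in particular $I(C) \ge \text{ind}(C) \ge 0$ for generic $J$, since transversality forces $\text{ind}\ge 0$ on nonconstant somewhere injective curves.

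Next I would pass to a general current by writing $\mathcal{C} = \sum_a d_a C_a$ as a sum of distinct irreducible somewhere injective curves $C_a$ with multiplicities $d_a \ge 1$. Because $Q_\tau$ is quadratic (see \eqref{qtauquadratic}), the index $I$ is not additive; instead one obtains a lower bound of the shape
\[
I(\mathcal{C}) \ge \sum_a d_a\,\text{ind}(C_a) + 2\sum_{a<b} d_a d_b\,(C_a \cdot C_b) + \sum_a 2\delta(C_a),
\]
in which every correction term is manifestly nonnegative: the intersection numbers $C_a\cdot C_b \ge 0$ between distinct holomorphic curves and the self-intersection data $\delta(C_a)\ge 0$. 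This immediately gives $I(\mathcal{C}) \ge 0$, the first half of (i). For the equality case, nonnegativity of every term forces $\delta(C_a)=0$ (each $C_a$ embedded), $C_a\cdot C_b=0$ (disjoint components), $\text{ind}(C_a)=0$, and, crucially, that the writhe bound is sharp at every end; sharpness imposes the ECH \emph{partition conditions} on the multiplicities of the ends, and the only configurations meeting all of these constraints---including for the covered components---are unions of trivial cylinders $\R\times\gamma$ with multiplicities. This proves (i).

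Finally, for (ii) and (iii) I would run the same bookkeeping with $I(\mathcal{C})\in\{1,2\}$. Since each term in the lower bound for $I(\mathcal{C})$ is a nonnegative integer, only one or two units of index can be distributed among them. All components except one are therefore forced into the equality regime of (i) and must be trivial cylinders; collecting these into $\mathcal{C}_0$ gives $I(\mathcal{C}_0)=0$. The remaining component $C_1$ (resp.\ $C_2$) must carry multiplicity one---any higher cover would contribute strictly more index through the writhe and partition terms---and must satisfy $\text{ind}(C_1)=I(C_1)=1$ (resp.\ $\text{ind}(C_2)=I(C_2)=2$), with $C_1$ (resp.\ $C_2$) embedded because $\delta=0$. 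In the case $I=2$ the hypothesis that $\alpha$ and $\beta$ are admissible orbit sets (chain complex generators) is used to exclude the exceptional partitions coming from doubly covered hyperbolic orbits, which is precisely where that assumption enters. The main obstacle is the writhe bound of the first step, whose proof requires a precise understanding of the asymptotic braids of pseudoholomorphic ends and of how their writhe is constrained by the Conley--Zehnder rotation data; the secondary difficulties are the combinatorial equality analysis via the partition conditions and the delicate treatment of multiply covered components, where the naive index inequality can fail.
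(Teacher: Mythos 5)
The paper offers no proof of this proposition: it is imported verbatim as \cite[Proposition 3.7]{hutchings2014lecture}, so the only meaningful comparison is with the standard argument in that reference --- and your sketch does follow that route. The skeleton is correct and in the right order: the index inequality $\text{ind}(C)\le I(C)-2\delta(C)$ for somewhere injective curves, obtained from the relative adjunction formula together with the asymptotic writhe bound; decomposition of an arbitrary current $\mathcal{C}=\sum_a d_a C_a$ into irreducible somewhere injective components; positivity of intersections making the cross-terms $d_a d_b\, C_a\cdot C_b$ nonnegative; the partition conditions governing the equality analysis; and the admissibility of $\alpha,\beta$ entering only in part (iii) to exclude configurations involving multiply covered hyperbolic orbits.

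There is, however, one genuine gap, and it is the engine of all three parts: your transversality input is too weak. You assert only $\text{ind}\ge 0$ for nonconstant somewhere injective curves, but a trivial cylinder $\R\times\gamma$ is itself embedded and nonconstant with $\text{ind}=0$, so this bound cannot distinguish trivial cylinders from anything else. The statement actually needed is that for generic $J$ every somewhere injective curve that is \emph{not} $\R$-invariant has $\text{ind}\ge 1$: the moduli space near such a curve is a manifold of dimension $\text{ind}$, and it contains the $1$-parameter family of $\R$-translates. Without this, your equality case in (i) fails --- a hypothetical nontrivial component with $\text{ind}=0$, $\delta=0$, and ends satisfying the partition conditions passes every test you impose, since sharpness of the writhe bound and the partition conditions constrain only the asymptotic braids and do not force $\R$-invariance. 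The same bound is what makes each nontrivial component cost at least one unit of index in the bookkeeping for (ii) and (iii); with only $\text{ind}\ge 0$ the ``distribute one or two units'' argument does not get started. Once $\text{ind}\ge 1$ is inserted, your outline matches the cited proof; a secondary, cosmetic point is that the coefficient $2\delta(C_a)$ in your displayed master inequality is not the correct weight when $d_a>1$ (the corrections grow with the covering multiplicity), but since you use only the nonnegativity of those terms, this does not affect the logic --- the truly delicate multiple-cover estimate is exactly the step you flag at the end, and in (iii) it is where admissibility of the generators must be invoked, as you correctly note.
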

Given two chain complex generators $\alpha$ and $\beta$, the chain complex differential $\partial$ coefficient $\langle \partial \alpha, \beta \rangle \in \Z_2$ is a$\mod 2$ count of ECH index $1$ $J$-holomorphic curves in the symplectization of $Y$ that \emph{converge as currents} to $\sum_i m_i \alpha_i$ as $s \to \infty$ and to $\sum_j n_j \beta_j$ as $s \to -\infty$, see e.g. \cite{hutchings2014lecture}.
\

It follows from Proposition \ref{ECHind} that $I$ gives rise to a relative $\Z_d$ grading on the chain complex $ECC_*(Y,\lambda,\Gamma,J)$, where $d$ is the divisibility of $c_1(\xi) + 2PD(\Gamma)$ in $H^2(Y;\Z) \mod$ torsion. In order to define an (non-canonical) absolute $\Z_d$ grading, it is enough to fix some generator $\beta \in \Gamma$ and set
$$|\alpha| = I(\alpha,\beta):= [I(\alpha,\beta,Z)],$$
for an arbitrary $Z \in H_2(Y,\alpha,\beta)$. By additivity property \ref{additivity} in Proposition \ref{ECHind}, the differential decreases this absolute grading by $1$. Moreover, when $c_1(\xi) + 2PD(\Gamma)$ is torsion in $H^2(Y;\Z)$, we obtain a $\Z$ grading on $ECC_*(Y,\lambda,\Gamma,J)$.
\subsection{U map and ECH capacities} \label{Umap}
There is a map
$$U \colon ECH_*(Y,\xi,\Gamma) \to ECH_{*-2}(Y,\xi,\Gamma),$$
when $Y$ is connected, induced by a chain map which counts ECH index $2$ curves passing through a base point $(0,z)$ in $\R \times Y$, see \cite[\S 3.8]{hutchings2014lecture}. Hutchings used this map to define a sequence
$$c_0(Y,\lambda) \leq c_1(Y,\lambda) \leq c_2(Y,\lambda) \leq c_3(Y,\lambda) \leq \cdots \leq \infty.$$
It is defined as $c_0(Y,\lambda) = 0$ and
$$c_k(Y, \lambda) = \inf_L \left\{\exists \ \eta \in ECH^L(Y,\lambda,0)| \ U^k\eta = [\emptyset]\right\}, \quad \text{for} \quad k\geq 1.$$
This sequence is the ECH spectrum of $(Y,\lambda)$. When $\lambda$ is a degenerate contact form, one defines
\begin{equation}\label{cklim}
c_k(Y,\lambda) = \lim_{n\to \infty} c_k(Y,f_n\lambda),
\end{equation}
where $f_n \colon Y \to \R_{>0}$ are functions on $Y$, with $f_n{\lambda}$ nondegenerate contact forms and $\lim_{n\to \infty} f_n = 1$ in the $C^0$ topology. Now suppose that $(X,\omega)$ is a symplectic filling of $(Y,\lambda)$, meaning that $X$ is a four dimensional manifold with boundary $\partial X = Y$ and $\omega|_Y = d\lambda$. 
It turns out that for each $L>0$ there are homomorphisms
$$\Phi^L(X,\omega)\colon ECH^L(Y,\lambda,0) \to \Z/2,$$
such that $\Phi^L([\emptyset]) = 1$, in particular, this ensures that $[\emptyset] \neq 0 \in ECH(Y,\lambda,0)$.
For the symplectic four dimensional manifold $(X,\omega)$, its ECH capacities are defined as
$$c_k(X,\omega) = c_k(Y,\lambda) \in [0,\infty],$$
and these are in fact symplectic invariants. For a collection of properties of the numbers $c_k(X,\omega)$ including an unexpected property relating the asymptotics of ECH capacities to the volume of the manifold $(X,\omega)$, see Theorem 1.3 in \cite{hutchings2014lecture}.

\begin{rmk}\label{rmk:umap}
Taubes proved in \cite{taubes2010embeddedV} that the $U$ map on ECH agrees with the analogous $U$ map on Seiberg-Witten Floer cohomology. In particular, it is a topological invariant of $Y$.
\end{rmk}

\subsection{Morse-Bott approach to Zoll contact forms}
A contact form $\lambda$ on $Y$ is Morse-Bott if the action spectrum
$$\sigma(\lambda) = \{\mathcal{A}(\gamma)| \ \gamma \ \text{Reeb orbit of} \ \lambda\}$$
is discrete and if, given $T \in \sigma(\lambda)$ we always have $N_T=\{p \in Y; \ \varphi_T(p) = p\}$, where $\varphi$ denotes the Reeb flow, as a closed submanifold of $Y$ such that the rank of $d\lambda|_{N_T}$ is locally constant and $T_pN_T = \ker(d(\varphi_T) - I)_p$. In particular, if $\lambda$ is a Zoll contact form, i.e., every Reeb orbit is closed and has the same minimal period, $\lambda$ is Morse-Bott and such that $N_T = Y$ for the common minimal period $T$ of the orbits; suppose that $\lambda$ is Zoll. The idea then is to find a function $\overline{f} \colon Y \to \R$ such that $d\overline{f}(R_\lambda) = 0$ related with a Morse function $f \colon \SS \to \R$, where $\SS = Y/S^1$ is the orbit space, and doing a small perturbation on the contact form without changing the contact structure. It is done in such a way that, when one fixes an action range, the Reeb orbits from the new one are nondegenerate and correspond to the critical points of $f$. Indeed, the vanishing condition along the Reeb direction ensures that $\overline{f}$ descends to the quotient, and $f$ is this induced function.  We note that since $\lambda$ is Zoll, the action on $Y$ given by the Reeb flow is a free $S^1$-action, and thus the orbit space $\SS$ is a compact surface. Let $\mathfrak{q}\colon Y \to \SS$ be the quotient map of the orbit space.
\begin{lemma}\label{orbitspace}
Let $\lambda$ be a Zoll contact form on a closed 3 dimensional manifold $Y$. Then the Reeb orbits space $\SS = Y/S^1$ is diffeomorphic to an orientable closed surface.
\begin{proof}
The closed surface $\SS$ does admit a symplectic form. In fact, since $\ker d\lambda = \langle R_\lambda \rangle$ and the Reeb vector field $R_\lambda$ is the infinitesimal generator of the action, $d\lambda$ is basic, i.e., there exists a unique $2$-form $\omega_\lambda$ defined on $\SS$ such that $\mathfrak{q}^*\omega_\lambda = d\lambda$. Moreover, the splitting $TY = \langle R_\lambda \rangle \oplus \xi$ and the differential of quotient map $d\mathfrak{q}$ ensure that $T\SS \cong \xi$, and thus the fact that $d\lambda|_\xi$ is nondegenerate yields the nondegeneracy of $\omega_\lambda$.
\end{proof}
\end{lemma}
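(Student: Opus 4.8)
The plan is to produce the surface $\SS$ as a quotient and exhibit a nondegenerate closed $2$-form on it, which by the classification of closed surfaces forces orientability. First I would observe that since $\lambda$ is Zoll, the Reeb flow $\varphi_t$ is periodic with common minimal period $T$, and rescaling time gives a smooth, \emph{free} $S^1$-action on $Y$ whose orbits are exactly the Reeb orbits. Freeness is the key structural input: it guarantees that the quotient $\SS=Y/S^1$ is a smooth closed manifold and that $\mathfrak{q}\colon Y\to\SS$ is a principal $S^1$-bundle, in particular a submersion. Since $\dim Y=3$, we get $\dim\SS=2$, and compactness of $Y$ passes to $\SS$, so $\SS$ is a closed surface.

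Next I would show $\SS$ carries a symplectic (equivalently, for a surface, a nowhere-vanishing) $2$-form, which is exactly what the statement's proof sketch indicates. The form $d\lambda$ is $S^1$-invariant because $\mathcal{L}_{R_\lambda}d\lambda=d\,i_{R_\lambda}d\lambda+i_{R_\lambda}d(d\lambda)=0$, using $i_{R_\lambda}d\lambda=0$. It is also \emph{horizontal}: $i_{R_\lambda}d\lambda=0$ says $d\lambda$ annihilates the vertical direction $\langle R_\lambda\rangle$. An invariant horizontal form is basic, so there is a unique $2$-form $\omega_\lambda$ on $\SS$ with $\mathfrak{q}^*\omega_\lambda=d\lambda$. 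The reason it is nondegenerate is the splitting $TY=\langle R_\lambda\rangle\oplus\xi$ together with the fact that $d\mathfrak{q}$ restricts to an isomorphism $\xi_p\xrightarrow{\sim} T_{\mathfrak{q}(p)}\SS$ (the kernel of $d\mathfrak{q}$ is precisely the vertical line $\langle R_\lambda\rangle$). Since $d\lambda|_\xi$ is nondegenerate by the contact condition $\lambda\wedge d\lambda>0$, transporting through this isomorphism shows $\omega_\lambda$ is nondegenerate at every point of $\SS$.

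Finally I would conclude orientability. A closed surface admitting a nowhere-vanishing $2$-form is orientable, since such a form is a volume form and trivializes $\Lambda^2 T^*\SS$; equivalently, the real line bundle $\Lambda^2 T^*\SS$ is trivial, which is exactly the orientability of $\SS$. Alternatively, a closed surface admits a symplectic form if and only if it is orientable, so the existence of $\omega_\lambda$ settles the claim. This gives that $\SS$ is diffeomorphic to an orientable closed surface, as desired.

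I expect the only genuine subtlety to be the passage from ``$d\lambda$ is $S^1$-invariant and horizontal'' to ``$d\lambda$ is basic,'' i.e.\ descends to $\SS$; this is the standard characterization of basic forms for a free action, and I would cite it rather than reprove it. Verifying freeness of the action (no nontrivial isotropy) is where Zollness is used essentially: it ensures all orbits have the same minimal period, so no point is fixed by a proper subgroup of $S^1$, which is what makes $\mathfrak{q}$ a genuine principal bundle rather than merely a map to an orbifold. Everything else is a direct unwinding of the definitions of $R_\lambda$, $\xi$, and the contact condition.
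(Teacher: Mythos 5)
Your proposal is correct and follows essentially the same route as the paper: freeness of the $S^1$-action (which the paper notes just before the lemma follows from Zollness) makes $\mathfrak{q}\colon Y\to\SS$ a submersion onto a closed surface, the invariant horizontal form $d\lambda$ descends to a nondegenerate $\omega_\lambda$ via $T\SS\cong\xi$, and the resulting area form forces orientability. You merely spell out details the paper leaves implicit, such as the invariance computation $\mathcal{L}_{R_\lambda}d\lambda=0$ and the freeness argument.
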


Now we choose a Morse function $f \colon \SS \to \R$ and define $\overline{f} = \mathfrak{q}^*f = f \circ \mathfrak{q} \colon Y \to \R$. For small positive $\varepsilon$, we define the perturbed contact form
\begin{equation} \label{lambdaepsilon}
\lambda_\varepsilon = (1+\varepsilon \mathfrak{q}^*f)\lambda.
\end{equation}
Using the splitting $TY = \langle R_\lambda \rangle \oplus \xi$, where $R_\lambda$ is the Reeb vector field defined by $\lambda$ and $\xi = \ker \lambda$, one can understand better the periodic orbits of the new Reeb vector field with fixed action range.

\begin{lemma}{\cite[Lemma 2.3]{bourgeois2002morse}} \label{bourg}
For each $T$, there exists $\varepsilon=\varepsilon(T)>0$ small enough such that the periodic orbits of $R_{\lambda_\varepsilon}$ in $Y$ of action $T^\prime \leq T$ are nondegenerate and correspond to the critical points of $f$.
\end{lemma}
Given a critical point $p\in \SS$ of $f$, and denoting by $\gamma_p^k$, $k = 1,2,\ldots$, the corresponding Reeb orbit and its iterations, the construction of Bourgeois gives the Conley-Zehnder indices of these orbits, for small $\varepsilon>0$ as in the previous lemma. We recall here that using a trivialization $\tau$ of $\xi$ along a simple Reeb orbit $\gamma$, we obtain a path of symplectic matrices $\Psi_\gamma(t)$, which has a Robin-Salamon index $\mu_{RS}(\Psi_\gamma(t))$ as defined in $\cite{robbin1993maslov}$. This index can be seen as a generalized Conley-Zenhder index when the path of symplectic matrices does not necessarily end at a non-degenerate matrix. One then sets $\mu^\tau_{RS}(\gamma):=\mu_{RS}(\Psi_\gamma(t))$. Using this index, Bourgeois computed the Conley-Zehnder index of a Reeb orbit of a perturbed contact form.
\begin{lemma}\cite[ Lemma 2.4]{bourgeois2002morse} 
Let $T>0$, $\varepsilon>0$ as in Lemma \ref{bourg}, and let $k$ be a positive integer such that the action $kT^\prime$ of $\gamma_p^k$ is less than or equal to $T$. Therefore the Conley-Zehnder index of $\gamma_p^k$ as a Reeb orbit of $\lambda_\varepsilon$ is given by
\begin{equation}
CZ_\tau(\gamma_p^k) = \mu^\tau(\gamma_p^k) - \frac{1}{2} \dim \SS + \text{ind}_f(p),
\end{equation}
where $\mu^\tau(\gamma_p^k)$ denotes the Robin-Salamon index of $\gamma_p^k$ as a Reeb orbit of $\lambda$.
\end{lemma}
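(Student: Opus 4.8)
The plan is to compute the Conley–Zehnder index of the perturbed Reeb orbit $\gamma_p^k$ directly from the linearized return map, by splitting the tangent space $TY = \langle R_\lambda\rangle \oplus \xi$ into a ``base'' direction tangent to the orbit space $\SS$ and a ``fiber'' direction along the Reeb flow, and then accounting for the three ingredients in the formula separately: the Robbin–Salamon contribution $\mu^\tau(\gamma_p^k)$ coming from the unperturbed Zoll form, the shift $-\tfrac{1}{2}\dim\SS$, and the Morse index $\mathrm{ind}_f(p)$ of the critical point. Since $\dim\SS = 2$ here, the shift is simply $-1$, but I would keep it general to mirror Bourgeois's statement.

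First I would set up the linearized flow of $R_{\lambda_\varepsilon}$ along $\gamma_p^k$ and expand it to first order in $\varepsilon$. Writing $\lambda_\varepsilon = (1+\varepsilon\,\mathfrak{q}^*f)\lambda$, the Reeb vector field $R_{\lambda_\varepsilon}$ differs from $R_\lambda$ by a correction whose leading term is governed by $df$ pulled back from $\SS$; because $d\overline{f}(R_\lambda)=0$, the perturbation acts tangentially to the orbit space and its linearization along a critical fiber decomposes compatibly with $TY = \langle R_\lambda\rangle\oplus\xi$. The key point is that the symplectic path $\Psi_{\gamma_p^k}^\varepsilon(t)$ associated to the perturbed orbit is, to leading order, the concatenation of the unperturbed degenerate path $\Psi_{\gamma_p^k}(t)$ with a small rotation in the directions where $d^2 f$ is nondegenerate, the sign of the rotation being dictated by whether $f$ is increasing or decreasing at $p$.

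The main computation is then to relate the Conley–Zehnder index of the \emph{nondegenerate} perturbed path to the Robbin–Salamon index of the \emph{degenerate} unperturbed path. Here I would invoke the general principle for the Robbin–Salamon index of a path that ends at a degenerate matrix: a small perturbation resolving the degeneracy shifts the index by $\pm\tfrac{1}{2}$ per degenerate direction, and the total shift is recorded by the signature of the relevant crossing form (or Hessian). Because the critical point $p$ has Morse index $\mathrm{ind}_f(p)$, exactly that many of the degenerate directions get resolved ``downward'' and the remaining $\dim\SS - \mathrm{ind}_f(p)$ resolve ``upward''; combining these half-integer corrections with the half-integer ambiguity $-\tfrac{1}{2}\dim\SS$ built into the Robbin–Salamon normalization yields precisely $CZ_\tau(\gamma_p^k) = \mu^\tau(\gamma_p^k) - \tfrac{1}{2}\dim\SS + \mathrm{ind}_f(p)$.

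The hard part will be making the crossing-form bookkeeping rigorous: one must verify that the degenerate crossing of $\Psi_{\gamma_p^k}(t)$ at the endpoint is exactly the $\dim\SS$-dimensional $\xi$-subspace descending to $T_p\SS$, and that the second-order behavior of the perturbation there is governed by $\varepsilon\,\mathrm{Hess}_p f$ in the right sign convention, so that the number of negative eigenvalues equals $\mathrm{ind}_f(p)$. I would lean on Lemma \ref{bourg} to guarantee nondegeneracy of the perturbed orbit for the chosen action range, so that $CZ_\tau$ is well-defined, and on the homotopy invariance and catenation (additivity under concatenation) properties of the Robbin–Salamon index to reduce the computation to this single endpoint crossing. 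The dependence on the trivialization $\tau$ is harmless since both sides transform identically under \eqref{czmuda}, so the formula is consistent.
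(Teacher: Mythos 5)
First, a framing remark: the paper offers no proof of this lemma --- it is quoted verbatim from Bourgeois --- so the comparison is against Bourgeois's original argument, and your outline does follow its structure (linearize the perturbed flow along the critical fiber, use the splitting $TY=\langle R_\lambda\rangle\oplus\xi$, and reduce to Robbin--Salamon crossing-form bookkeeping at the degenerate terminal crossing, governed by $\mathrm{Hess}_p f$). There is, however, a concrete error at the heart of your third paragraph. If, among the $\dim\SS$ degenerate directions at the terminal crossing, $u$ resolve ``upward'' (the linearized flow rotates \emph{past} the crossing, changing that direction's contribution by $+\tfrac{1}{2}$ relative to the $\tfrac{1}{2}\mathrm{sign}$ already counted in $\mu^\tau$) and $\dim\SS-u$ resolve ``downward'' ($-\tfrac{1}{2}$ each), then $CZ_\tau(\gamma_p^k)=\mu^\tau(\gamma_p^k)+u-\tfrac{1}{2}\dim\SS$, so the formula forces $u=\mathrm{ind}_f(p)$. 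You assert the opposite assignment ($\mathrm{ind}_f(p)$ directions resolving downward), which yields $\mu^\tau+\tfrac{1}{2}\dim\SS-\mathrm{ind}_f(p)$, and the rescue you invoke --- a ``half-integer ambiguity $-\tfrac{1}{2}\dim\SS$ built into the Robbin--Salamon normalization'' --- does not exist: $\mu^\tau(\gamma_p^k)$ is a fixed number (e.g.\ $2k$ for the round metric, as computed in the paper), already including the endpoint half-contribution, not something defined up to an additive normalization; and even granting the extra $-\tfrac{1}{2}\dim\SS$, your expression still carries $-\mathrm{ind}_f(p)$ instead of $+\mathrm{ind}_f(p)$. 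A sanity check against the paper's own numbers exposes this: at the minimum $p_1$ of $f$ on $S^*S^2$ your signs give $CZ_\tau(\gamma_{p_1}^k)=2k+1$, contradicting \eqref{CZesfera}, which gives $2k-1$.

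The repair is exactly the point you flagged as ``the hard part'' but then resolved with the wrong sign: with the paper's perturbation $\lambda_\varepsilon=(1+\varepsilon\mathfrak{q}^*f)\lambda$, it is the \emph{negative} eigendirections of $\mathrm{Hess}_p f$ along which the linearized return map rotates past the terminal crossing, so the upward count is $\mathrm{ind}_f(p)$ and one gets $\tfrac{1}{2}\bigl(\mathrm{ind}_f(p)-(\dim\SS-\mathrm{ind}_f(p))\bigr)=\mathrm{ind}_f(p)-\tfrac{1}{2}\dim\SS$ in one stroke. A cleaner implementation, and essentially what Bourgeois does, avoids endpoint surgery altogether: show that up to terms of order $\varepsilon^2$ and homotopy rel endpoints, the perturbed path is the pointwise product of the unperturbed path $\Psi_{\gamma_p^k}(t)$ with a short path generated by the quadratic form $\varepsilon\,\mathrm{Hess}_p f$ (in the correct convention), then use the catenation/product property of $\mu_{RS}$ together with the standard computation that such a short path has Robbin--Salamon index $-\tfrac{1}{2}\mathrm{sign}(\mathrm{Hess}_p f)=\mathrm{ind}_f(p)-\tfrac{1}{2}\dim\SS$. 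Finally, a smaller omission: for iterates $k\ge 2$ the unperturbed path has fully degenerate \emph{interior} crossings at $t=T',\dots,(k-1)T'$; your endpoint-only analysis tacitly assumes these survive the perturbation with unchanged contribution $\mathrm{sign}=\dim\SS$ each (they do, being displaced but not destroyed), and this should be stated for the argument to be complete.
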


In particular, if the orbit space $\SS$ is homeomorphic to the sphere $S^2$, we take a Morse function $f\colon \SS \to \R$ with exactly two critical points $p_1,p_2 \in \SS$ of Morse indices $0$ and $2$, respectively. This way, we get the Conley-Zehnder indices:
\begin{equation}\label{CZ}
CZ_\tau(\gamma_{p_1}^k) =  \mu^\tau(\gamma_{p_1}^k) -1 \quad \text{and} \quad CZ_\tau(\gamma_{p_2}^k) =\mu^\tau(\gamma_{p_2}^k) + 1.
\end{equation}

\subsection{Example: three dimensional sphere}\label{s3} \label{esferas3}
Let $Y=\partial B(1)\subset \R^4 \cong \C^2$ and let
$$\lambda_0 =  \frac{1}{2}\sum_{k=1}^2 \left(x_k dy_k - y_k dx_k\right)$$ the standard Liouville form in $\C^2$. Note that $\lambda_0$ is indeed a contact form on $Y\cong S^3$.
It turns out that the Reeb orbits of $\lambda_0$ are the circles of the Hopf fibration. In particular, $\lambda_0$ is Zoll and the orbit space is the sphere $S^2$. Furthermore, considering a trivialization $\tau$ given by $T\C^2 = \xi_0 \oplus \xi_0^\omega$, where $\xi_0^\omega$ denotes the symplectic complement of $\xi_0$, and using an identification between the symplectization $\R \times Y$ and $\C^2 \backslash \{0\}$, one concludes that $\mu^\tau(\gamma^k) = 4k$, see \cite[Lemma 4.6]{nelson2020automatic}, when $\gamma$ is a simple Reeb orbit. Then \eqref{CZ} yields
\begin{equation}\label{CZs3}
CZ_\tau(\gamma_{p_1}^k) =  4k -1 \quad \text{and} \quad CZ_\tau(\gamma_{p_2}^k) = 4k + 1,
\end{equation}
and thus, since these are odd numbers, the Reeb orbits that generates the filtered chain complex $ECC^T_*(Y,\lambda_{\varepsilon(T)},0,J)$ are all elliptic. By the index parity property in Proposition \ref{ECHind}, the ECH index will be always even, and this implies that the differential
$$\partial\colon ECC^T_*(Y,\lambda_{\varepsilon(T)},0,J) \to ECC^T_*(Y,\lambda_{\varepsilon(T)},0,J)$$
vanishes for any almost complex structure $J$. As an immediate consequence, the filtered ECH homology $ECH^T_*(Y,\lambda_{\varepsilon(T)},0,J)$ is isomorphic to its chain complex. The period of any simple Reeb orbit before the perturbation is $1$. Therefore, $ECC^T_*(Y,\lambda_{\varepsilon(T)},0,J)$ is generated by orbit sets $\{(\gamma_{p_1},m_1), (\gamma_{p_2},m_2)\}$ such that
$$(m_1+m_2)(1+\varepsilon \mathfrak{q}^*f) \leq T.$$
Moreover, we have $c_\tau(D) = 0$ for any disk $D$ bounding $\gamma_{p_1}$ or $\gamma_{p_2}$ since $\tau$ is a global trivialization of $\xi=\ker\lambda_0$ Let $Q_i := Q_\tau(Z_i)$, where $Z_i$ is the unique class in $H_2(Y, \gamma_{p_i},\emptyset)$ for $i=1,2$, and let $Q_{12}= Q_\tau(Z_1,Z_2)$. Then, from the definition of $Q_\tau$ and using equation \eqref{qtauquadratic} repeatedly, we obtain$$Q_\tau(\gamma_{p_1}^{m_1}\gamma_{p_2}^{m_2}) = m_1^2Q_1 + 2m_1m_2 Q_{12} + m_2^2 Q_2.$$
Furthermore, for this trivialization we have $Q_i = Q_i - c_\tau(Z_i)$ which agrees with the self-linking number of the transverse knot $\gamma_{p_i}$, and $Q_{12}$ is just the linking number $lk(\gamma_{p_1},\gamma_{p_2})$. Since $\gamma_{p_i}$ is a fiber of the Hopf fibration, we have $Q_i = -1$, and noting that $\gamma_1$ intersects once and positively a disk bounded by $\gamma_2$ (and vice-versa), we have $Q_{12} = 1$; see \cite{hryniewicz2014systems} for a more general situation that one has these numbers. Therefore
\begin{equation}\label{eq:qtau}Q_\tau(\gamma_{p_1}^{m_1}\gamma_{p_2}^{m_2}) = -m_1^2 +2m_1m_2 - m_2^2.\end{equation}
Combining \eqref{CZs3} and \eqref{eq:qtau}, we compute the ECH index between $\alpha = \{(\gamma_{p_1},m_1), (\gamma_{p_2},m_2)\}$ and the empty set:
$$I(\alpha, \emptyset) = -m_1^2 + 2m_1m_2 -m_2^2 + \sum_{k=1}^{m_1} 4k -1 + \sum_{k=1}^{m_2} 4k +1 = (m_1+m_2)^2 + m_1 + 3m_2.$$
This agrees with the ECH index in \cite[Proposition 3.5]{nelson2020embedded}. Taking the direct limit on $T$, we obtain the well-known ECH of $S^3$:
\begin{align*}
ECH_*(S^3,\xi_0,0) = \begin{cases}
\Z, & \text{if} \ * \in 2\Z_{\geq 0} \\
0, & \text{else},
\end{cases}
\end{align*}
and ECH spectrum
$$c_k(\partial B(1),\lambda_0) = c_k(B(1)) = N(1,1)_k.$$
The ECH spectrum comes from computing the actions of the generators ordered by the grading $|\alpha| = I(\alpha, \emptyset)$:
$$ 1, \gamma_{p_1}, \gamma_{p_2}, \gamma_{p_1}^2, \gamma_{p_1} \gamma_{p_2},  \gamma_{p_2}^2, \gamma_{p_1}^3, \gamma_{p_1}^2 \gamma_{p_2}, \gamma_{p_1} \gamma_{p_2}^2, \gamma_{p_2}^3, \ldots,$$
the $U$ map described in \cite[Proposition 4.1]{hutchings2014lecture}, and the fact that for our contact form we have
$$\mathcal{A}(\gamma_{p_1}^{m_1} \gamma_{p_2}^{m_2}) = m_1 + m_2.$$
We use the product notation $\gamma_{p_1}^{m_1} \gamma_{p_2}^{m_2}$ to denote an orbit set $\{(\gamma_{p_1},m_1), (\gamma_{p_2},m_2)\}$ above. In this case, we write $1$ for the empty set.

\section{Unit cotangent bundle of sphere}
\subsection{Preliminaries}
Let $\Sigma$ be a closed possibly non-orientable surface. It is well known that the cotangent bundle $\pi \colon T^*\Sigma \to \Sigma$ has a canonical symplectic form given by $\omega = d\lambda$, where $\lambda$ is the tautological $1$-form. Pick a Riemannian metric on $\Sigma$ and let $Y = S^*\Sigma$ be the unit cotangent bundle for this metric, i.e, the norm $1$ covectors of $\Sigma$. It turns out that $Y$ is a closed $3$-manifold, $\lambda$ restricts to a contact form on $Y$ (which we denote again by $\lambda$), and the Reeb flow on $(Y,\lambda)$ is dual to the geodesic flow on the unit tangent bundle
$$S\Sigma = \{v \in T\Sigma| \ \Vert v \Vert = 1\},$$
see e.g. \cite[Theorem 1.5.2]{geiges2008introduction}. In particular, the Reeb orbits correspond to closed orbits of geodesic flow on $S\Sigma$. Moreover, the length of a closed geodesic is equal to the action of the corresponding Reeb orbit. We denote the contact structure by $\xi = \ker \lambda$, and note that $\xi$ is always fillable since one can consider the disk cotangent bundle $(D^*\Sigma,\omega)$, i.e, the covectors on $\Sigma$ with norm $\leq 1$ equipped with the restriction of the canonical symplectic form.
If $(q_1,q_2)$ are local coordinates for $\Sigma$ there are induced cotangent coordinates $(q_1,q_2,p_1,p_2)$ such that $\lambda$ is locally given by
$$\lambda = p_1 dq_1 + p_2 dq_2$$
on $T^*\Sigma$. If $\Sigma$ is orientable, $\xi$ admits a non-vanishing global section with values in the fiber of the bundle $S^*\Sigma\to \Sigma$. Such a section can be constructed as follows. Suppose that $\{\partial_{q_1},\partial_{ q_2}\}$ is positively oriented. Let $A(q)$ be the matrix representing the Riemannian metric in $T^*\Sigma$. In particular, for $(q,p)\in T^*\Sigma$, we have  $\Vert p\Vert^2=\langle A(q)p,p\rangle$. So we define \[\partial_{\theta}=\frac{J_0 A(q) p}{\Vert J_0 A(q) p \Vert},\quad \text{where } J_0=\begin{bmatrix}0&-1\\1&0\end{bmatrix}. \] It follows from a straight-forward computation that a oriented change of coordinates preserves $\partial_\theta$ locally. So $\partial_\theta$ is globally defined. Thus $c_1(\xi) = 0$.
\begin{rmk}
Hutchings and Sullivan have given a thorough combinatorial description of ECH of $T^3 = S^1 \times T^2 = \R/2\pi\Z \times \R^2/\Z^2 = S^*T^2$ with the contact structure described above in \cite{hutchings2006rounding}.
\end{rmk}

We note here that the tautological form $\lambda$ on the unit cotangent bundle of a surface $Y = S^*\Sigma$ is Zoll if and only if the metric on $\Sigma$ which defines $Y$ is Zoll, i.e., all prime geodesics are closed and have the same length. In particular, there are exactly two cases in which $\lambda$ is a Zoll contact form, namely, $\Sigma = S^2$ and $\Sigma = \R P^2$. Our goal is to treat both cases in the rest of this work. First we recall that the orbit spaces are diffeomorphic to the sphere $S^2$.

\begin{lemma}\label{orbitspace2}
Let $Y = S^*S^2$ or $S^*\R P^2$ defined by a Zoll metric and $\lambda$ be the tautological contact form. Then the Reeb orbit space $\SS = Y/S^1$ is diffeomorphic to the sphere $S^2$.

\begin{proof}
By Lemma \ref{orbitspace}, $\SS$ is an orientable compact surface and so we prove this Lemma by checking its topology. The homotopy long exact sequence related to the fibration $S^1 \hookrightarrow Y \twoheadrightarrow \SS$ yields
$$\cdots \to \pi_2(S^1) \to \pi_2(Y) \to \pi_2(\SS) \to \pi_1(S^1) \to \pi_1(Y) \to \pi_1(\SS) \to \cdots,$$
in particular, $\pi_2(\SS) \cong \Z$, since $\pi_2(Y) = 0$ and $\pi_1(Y) = \Z_2$ or $\Z_4$ (see section \S \ref{total} and \S \ref{total2}). Therefore, $\SS$ is diffeomorphic to the sphere $S^2$.
\end{proof}
\end{lemma}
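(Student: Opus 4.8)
The plan is to invoke the classification of closed surfaces. By Lemma~\ref{orbitspace} we already know that $\SS$ is a closed orientable surface, so it remains only to pin down its genus; it is enough to show that $\pi_2(\SS)\cong\Z$, since among closed orientable surfaces only $S^2$ has nontrivial $\pi_2$ (every surface of positive genus is aspherical, being a $K(\pi,1)$).

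Since $\lambda$ is Zoll, the Reeb flow defines a free $S^1$-action on $Y$, so $\mathfrak{q}\colon Y\to\SS$ is a fiber bundle with fiber $S^1$. I would write down the associated long exact sequence of homotopy groups,
\[\cdots\to\pi_2(S^1)\to\pi_2(Y)\to\pi_2(\SS)\to\pi_1(S^1)\to\pi_1(Y)\to\pi_1(\SS)\to\pi_0(S^1),\]
and use $\pi_2(S^1)=0$ together with the finiteness of $\pi_1(Y)$. The latter is the geometric input: in our two cases $\pi_1(S^*S^2)=\Z_2$ and $\pi_1(S^*\R P^2)=\Z_4$, as identified in \S\ref{total} and \S\ref{total2}. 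Finiteness of $\pi_1(Y)$ forces the universal cover of $Y$ to be a simply connected closed $3$-manifold, hence $S^3$ by the Poincar\'e conjecture, so that $\pi_2(Y)=\pi_2(S^3)=0$. The sequence then gives an injection $\pi_2(\SS)\hookrightarrow\pi_1(S^1)=\Z$ whose image is the kernel of the map $\Z\to\pi_1(Y)$; since $\pi_1(Y)$ is finite, this kernel has finite index and is therefore isomorphic to $\Z$. Hence $\pi_2(\SS)\cong\Z$ and $\SS\cong S^2$.

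The one substantial step is the finiteness of $\pi_1(Y)$, i.e. identifying the topology of the unit cotangent bundles $S^*S^2$ and $S^*\R P^2$ precisely enough to read off their (finite) fundamental groups; everything else is formal bookkeeping with the long exact sequence together with the surface classification. For that reason I would postpone the fundamental-group computation to the sections where the total spaces $Y$ are described explicitly, and treat its finiteness as the key input here. As a variant that avoids passing through $\pi_2$, one can note directly that the map $\pi_1(Y)\to\pi_1(\SS)$ is surjective, so $\pi_1(\SS)$ is a finite quotient of $\pi_1(Y)$, and $S^2$ is the only closed orientable surface with finite fundamental group.
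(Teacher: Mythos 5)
Your proposal is correct and follows essentially the same route as the paper: Lemma~\ref{orbitspace} for orientability, then the homotopy long exact sequence of the circle bundle $\mathfrak{q}\colon Y\to\SS$ together with $\pi_2(Y)=0$ and the finiteness of $\pi_1(Y)$ (computed in \S\ref{total} and \S\ref{total2}) to get $\pi_2(\SS)\cong\Z$, hence $\SS\cong S^2$. You merely add details the paper leaves implicit --- note that $\pi_2(Y)=0$ needs no appeal to the Poincar\'e conjecture here, since both total spaces are explicitly covered by $S^3$ ($S^*S^2\cong\R P^3$ and $S^*\R P^2\cong L(4,1)$) --- and your closing variant via surjectivity of $\pi_1(Y)\to\pi_1(\SS)$ is a clean shortcut.
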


\subsection{The topology of $S^* S^2$} \label{total}
Let $Y = S^*S^2$. Using the inclusion $T^* S^2\subset T^*\R^3$, we have a natural identification
\begin{equation} \label{r6}
Y = \{(q,p) \in \R^3 \times \R^3\mid \Vert q\Vert=\Vert p\Vert = 1, \langle q,p \rangle = 0\},
\end{equation}
where $\Vert \cdot \Vert$ and $\langle \cdot, \cdot \rangle$ denote the Euclidean norm and the scalar product on $\R^3$, respectively.
Let $S^3\subset\mathbb{H}=\C^2$ be the unit quaternionic sphere. We define
\begin{eqnarray*}
\psi \colon S^3 \subset \mathbb{H} &\to& Y \subset \R^3 \times \R^3 \\
u &\mapsto& (uj\overline{u}, uk\overline{u}).
\end{eqnarray*}
Here $\overline{u}$ denotes the quaternionic conjugate of $u$.
\begin{lemma} \label{doblecov}
The map
\begin{eqnarray*}
\psi \colon S^3 &\to& Y \subset \R^3 \times \R^3 \\
u &\mapsto& (uj\overline{u}, uk\overline{u})
\end{eqnarray*} is a double covering such that $\psi^*\lambda=4\lambda_0$. In particular, $\psi$ lifts doubly covered Reeb orbits of $(Y,\lambda)$ to Reeb orbits of $(S^3,\lambda_0)$.
\end{lemma}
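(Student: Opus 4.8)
The plan is to establish the three assertions separately: that $\psi$ takes values in $Y$, that it is a double covering, and that $\psi^*\lambda = 4\lambda_0$; the final ``lifting'' statement then follows formally. I would begin by identifying $\R^3$ with the imaginary quaternions $\mathrm{Im}\,\mathbb{H}$ and recalling that for a unit quaternion $u$ the conjugation $\rho_u(v) = uv\overline u$ is a rotation of $\mathrm{Im}\,\mathbb{H}$, so that $u\mapsto\rho_u$ is the standard double cover $S^3 = SU(2)\to SO(3)$ with kernel $\{\pm 1\}$. Since $\psi(u) = (\rho_u(j),\rho_u(k))$ and $\rho_u$ is an isometry preserving $\mathrm{Im}\,\mathbb{H}$, its two components are orthonormal imaginary quaternions, hence $\psi(u)\in Y$ by the description \eqref{r6}. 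For the covering claim I would note that $\rho_u(i) = \rho_u(j)\times\rho_u(k)$, so $\psi(u)$ determines the whole frame $(\rho_u(i),\rho_u(j),\rho_u(k))$ and thus $\rho_u$ itself; therefore $\psi$ factors as $S^3\to SO(3)\xrightarrow{\sim} Y$, the second map being the standard identification $S^*S^2\cong SO(3)$ sending $(q,p)$ to $[\,q\times p\mid q\mid p\,]$. Consequently $\psi(u)=\psi(u')$ iff $u'=\pm u$, and $\psi$, being a proper local diffeomorphism and $2$-to-$1$ between connected $3$-manifolds, is a double covering.

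The analytic heart is the identity $\psi^*\lambda = 4\lambda_0$, and this is the step I expect to demand the most care. Writing $Q(u)=uj\overline u$, $P(u)=uk\overline u$ and using that the tautological form restricts on $Y$ to $\lambda = \langle p, dq\rangle$, I would compute for $\dot u\in T_uS^3$, via $\langle a,b\rangle = -\mathrm{Re}(ab)$ on imaginary quaternions and $dQ(\dot u)=\dot u\, j\,\overline u + u\,j\,\overline{\dot u}$,
\[
\psi^*\lambda(\dot u) = -\mathrm{Re}\big(uk\overline u\,(\dot u\, j\,\overline u + u\,j\,\overline{\dot u})\big).
\]
Expanding with $\overline u u = 1$, $jk=i$, $kj=-i$ and the cyclic and conjugation symmetries of $\mathrm{Re}$, both resulting terms collapse to $-\mathrm{Re}(i\,\overline u\,\dot u)$, so $\psi^*\lambda(\dot u) = -2\,\mathrm{Re}(i\,\overline u\,\dot u)$. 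On the other side, after fixing the identification $\mathbb{H}\cong\C^2$ so that the diagonal Hopf $S^1$-action defining $\lambda_0$ is right multiplication by $i$, the Reeb field is $R_0 = 2ui$ and $\lambda_0 = \tfrac12\langle ui,\cdot\rangle$, whence $\lambda_0(\dot u) = -\tfrac12\,\mathrm{Re}(i\,\overline u\,\dot u)$; comparison gives $\psi^*\lambda = 4\lambda_0$. The main obstacle is precisely this bookkeeping: keeping the quaternionic conventions and the $\mathbb{H}\cong\C^2$ identification consistent so that the constant is exactly $4$ with the correct sign. A clean consistency check on the Reeb direction alone pins down the factor: $dQ(R_0) = 2u(ij-ji)\overline u = 4uk\overline u = 4P$ and $dP(R_0) = 2u(ik-ki)\overline u = -4Q$, so $d\psi(R_0) = 4(P,-Q) = 4R_\lambda$, which both confirms that $\psi$ intertwines Reeb directions and explains the $4$ as a product of a $2$ from $R_0 = 2ui$ and a $2$ from $ij-ji = 2k$.

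Finally, the Reeb-orbit statement follows from $\psi^*\lambda = 4\lambda_0$: as a local diffeomorphism pulling $\lambda$ back to a constant multiple of $\lambda_0$, the map $\psi$ carries $\lambda_0$-Reeb trajectories to $\lambda$-Reeb trajectories. Tracking a Hopf fiber explicitly, I expect $Q(ue^{2si}) = \cos(4s)\,Q + \sin(4s)\,P$ (using $e^{2si}je^{-2si} = e^{4si}j$), so that as $s$ runs over the simple $\lambda_0$-period $[0,\pi]$ the image sweeps the great circle through $Q$ in the direction $P$ exactly twice. Thus a simple Hopf orbit maps bijectively onto the double cover of a simple geodesic, which is the asserted lifting of doubly covered Reeb orbits of $(Y,\lambda)$ to (embedded) Reeb orbits of $(S^3,\lambda_0)$; the actions are consistent, since $\int\psi^*\lambda = 4\int\lambda_0 = 4\pi = 2\cdot 2\pi$.
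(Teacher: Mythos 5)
Your proposal is correct and follows essentially the same route as the paper: the analytic core is the identical quaternionic expansion of $\psi^*\lambda$ (your $-2\,\mathrm{Re}(i\,\overline u\,\dot u)$ agrees with the paper's $2\,\mathrm{Re}(i\,\overline{w_u}\,u)$ by cyclicity and conjugation-invariance of $\mathrm{Re}$), followed by the identification with $4\lambda_0$ under a suitable $\mathbb{H}\cong\C^2$ convention. The only difference is that you supply details the paper declares straightforward --- the factorization $S^3\to SO(3)\cong Y$ for the double-covering claim, the Reeb-direction check $d\psi(R_0)=4R_\lambda$, and the explicit Hopf-fiber tracking for the orbit-lifting statement --- all of which are correct.
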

\begin{proof}
It is straight-forward to see that $\psi$ is a smooth double covering.
Note that the tautological contact form visualizing $Y \subset \R^3 \times \R^3$ as in \eqref{r6} is given by
$$\lambda_{(q,p)}(w_1,w_2) = \langle p,w_1 \rangle.$$
This way, inspired in \cite{albers2018reeb}, we compute
\begin{eqnarray*}
(\psi^*\lambda)_u(w_u) &=& \lambda_{\psi(u)}(d\psi_u(w_u)) \\ &=& \lambda_{(uj\overline{u},uk\overline{u})}(uj\overline{w_u}+w_uj\overline{u},uk\overline{w_u}+w_uku) \\ &=& \langle uk\overline{u}, uj\overline{w_u}+w_uj\overline{u} \rangle \\ &=& \langle k, j\overline{w_u}u+\overline{u}w_uj \rangle \\ &=& \text{Re}(i\overline{w_u}u-k\overline{u}w_uj) \\ &=& \text{Re}(i\overline{w_u}u+k\overline{w_u}uj) \\ &=& \text{Re}(i\overline{w_u}u+i\overline{w_u}u) \\ &=& 2\text{Re}(i\overline{w_u}u),
\end{eqnarray*}
for $u \in S^3 \subset \mathbb{H}$ and $w_u$ a tangent vector in $T_uS^3 \subset \mathbb{H}$. One then can check directly that
$$\psi^*\lambda = 2(x_1dy_1 - y_1dx_1 + x_2dy_2 - y_2dx_2)=4\lambda_0.$$
\end{proof}

It turns out that $\psi$ induces a well known diffeomorphism between $Y = S^*S^2$ and the real projective space $\R P^3$. Thus, we conclude
\begin{align}\label{homologys2}
H_*(Y;\Z) = \begin{cases}
\Z, & * = 0, 3 \\
\Z_2, & * = 1 \\
0, & * = \text{otherwise}.
\end{cases}
\end{align}
We note here that a smooth curve with transverse self-intersections in $Y$ is nullhomologus exactly when it has an odd number of self-intersections.

\subsection{Reeb orbits and perturbation}
A well known fact is that the closed geodesics of the standard sphere $S^2$ are exactly the great circles (and its iterations). So every geodesic is closed and has prime period equal to $2\pi$. We fix this standard metric as a defining metric for $Y=S^*S^2$. Therefore the tautological contact form $\lambda = pdq$ is Zoll and by Lemma \ref{orbitspace2}, the orbit space is homeomorphic to the sphere $S^2$. We note that, when we fix the action range, the perturbed contact form does not realize some Reeb orbits from the old one, but cannot create new orbits. In particular, the remaining Reeb orbits keep corresponding to closed geodesics on $S^2$.
\

In this case, we trivialize $\xi = \ker \lambda$ along an orbit $\gamma$ using $\tau$ given by $\partial_\theta$, obtaining the symplectic matrices path $\Psi_\gamma(t)$ given by
$$\Psi_\gamma(t) = \begin{pmatrix}
\cos t & -\sin t \\
\sin t & \cos t
\end{pmatrix}, \quad t \in [0,2\pi].$$
So the crossing form has only $2$ regular crossings, namely at the ending points $\Psi_\gamma(0) = \Psi_\gamma(2\pi) = I_{2 \times 2}$, and thus the Robin--Salamon index of a simple orbit is $2$, yielding $\mu^\tau(\gamma_p^k) =2k$. We note here that with our trivialization of $\xi$, the Robin--Salamon index of an orbit must agree with the Morse index of this orbit as a critical point of energy functional being a geodesic on the sphere, see \cite[Proposition 1.7.3]{eliashberg2000introduction}. Combining this with Lemma \ref{bourg}, for $T > 2\pi$ and small $\varepsilon = \varepsilon(T)>0$, the periodic orbits of $R_{\lambda_\varepsilon}$ on $Y$ with action up to $T$ are nondegenerate and lie above the critical points $p_1,p_2$ of $f$. So it follows from \eqref{CZ} that
\begin{equation} \label{CZesfera}
CZ_\tau(\gamma_{p_1}^k) = 2k -1 \quad \text{and} \quad CZ_\tau(\gamma_{p_2}^k) =2k + 1.
\end{equation}
So the Reeb orbits that generates the filtered chain complex $ECC^T_*(Y,\lambda_{\varepsilon(T)},\Gamma,J)$ are all elliptic again. As in \S \ref{esferas3}, the index parity property gives that the differential vanishes. Therefore the filtered ECH homology $ECH^T_*(Y,\lambda_{\varepsilon(T)},\Gamma,J)$ is isomorphic to its chain complex $ECC^T_*(Y,\lambda_{\varepsilon(T)},\Gamma,J)$, which is generated by the orbit sets $\{(\gamma_{p_1},m_1), (\gamma_{p_2},m_2)\}$ in homology class $\Gamma$ such that
$$2\pi(m_1+m_2)(1+\varepsilon \mathfrak{q}^*f) \leq T.$$

\subsection{ECH index}
The goal of this section is to compute the ECH index between two orbit sets and then establishing an absolute grading on $ECH^T_*(S^*S^2,\lambda_\varepsilon,\Gamma,J)$.
\

Given an orbit set $\alpha = \{(\gamma_{p_1},m_1), (\gamma_{p_2},m_2)\}$, its homology class is described as
\begin{equation} \label{ahomologia}
\alpha \in \Gamma \in H_1(Y) \cong \Z_2 \Leftrightarrow m_1+m_2 \equiv \Gamma\mod 2.
\end{equation}
Indeed, simple closed geodesics of sphere are never contractible in $Y = S^*S^2$ (see section \S \ref{total}). Now we compute the ECH index.

\begin{prop}\label{echindex2}
Let $\alpha = \{(\gamma_{p_1},m_1),(\gamma_{p_2},m_2)\}$ and~$\beta= \{(\gamma_{p_1},n_1),(\gamma_{p_2},n_2)\}$ be orbit sets in the same homology class $\Gamma$. Then the ECH index $I(\alpha,\beta)$ is given by
$$I(\alpha,\beta) = \frac{1}{2}\left(m_1+m_2-n_1-n_2\right)^2  + (m_1+m_2-n_1-n_2)(n_1+n_2) +2m_2 -2n_2.$$
\begin{proof}
First we note that since $H_2(Y) = 0$, there is a unique class $Z \in H_2(Y,\alpha,\beta)$. If $S$ is a $2$-chain representing this class, then
$$\partial S = m_1\gamma_{p_1} + m_2\gamma_{p_2} - n_1\gamma_{p_1} - n_2\gamma_{p_2} = (m_1-n_1)\gamma_{p_1} + (m_2-n_2)\gamma_{p_2}.$$
Therefore, if $S_0$ is a representative of the class $Z_0 \in H_2(Y,\gamma_{p_1}^{2(m_1-n_1)}\gamma_{p_2}^{2(m_2-n_2)},\emptyset)$, we can take $S_0 +n_1(\R \times \gamma_{p_1}^2) + n_2(\R \times \gamma_{p_2}^2)$ as a representative of $2Z$. Let $C_\gamma$ denote the trivial cylinder $\R \times \gamma$. Now using \eqref{qtauquadratic}, we compute
\begin{eqnarray}
Q_\tau(Z) &=& \frac{1}{4}Q_\tau(2Z) = \frac{1}{4} Q_\tau(S_0 + n_1 C_{\gamma_{p_1}^2} + n_2C_{\gamma_{p_2}^2}) \nonumber \\ &=&\frac{1}{4}\left(Q_\tau(S_0) + 2Q_\tau(S_0,n_1C_{\gamma_{p_1}^2}) + 2Q_\tau(S_0,n_2C_{\gamma_{p_2}^2})\right). \label{qtauZ}
\end{eqnarray}
Let $Z_i$ be the unique class in $H_2(Y,\gamma_{p_i}^2,\emptyset)$ for $i=1,2$. It follows from \eqref{qtauquadratic} that
\begin{eqnarray*}
Q_\tau(S_0) &=& (m_1-n_1)^2Q_\tau(Z_1) + 2(m_1-n_1)(m_2-n_2)Q_\tau(Z_1,Z_2) \\ &+&(m_2-n_2)^2Q_\tau(Z_2).
\end{eqnarray*}
Since $c_\tau$ vanishes in our trivialization, we have $Q_\tau(Z_i) = Q_\tau(Z_i) - c_\tau(Z_i)$, and thus, it coincides with the self-linking number $sl(\gamma_{p_i}^2)$. Moreover, $Q_\tau(Z_1,Z_2)$ is just the linking number $lk(\gamma_{p_1}^2,\gamma_{p_2}^2)$. To compute these linking numbers, we use the commutative diagram that we obtained in section \S \ref{total}.
\[
  \begin{tikzcd}
     S^3,4\lambda_0 \arrow[dd,bend right] \arrow{d} \arrow[swap]{dr} \\ \R P^3 \arrow{r}{\simeq} \arrow[d, dotted] & Y \arrow[swap]{dl}{\pi}   \\ S^2 
  \end{tikzcd}
\]
Note that the standard contact form $\lambda_0$ defined on $S^3$ is invariant by the antipodal map and then, defines a contact form on $\R P^3 \cong Y$. The Reeb orbits from this contact form are exactly the fibers of bundle map $\pi \colon Y = S^*S^2 \to S^2$ while the Reeb orbits from the tautological form $\lambda$ are transverse to these fibers. Lemma \ref{doblecov} ensures that $\gamma_{p_i}^2$ lifts to the image of a Hopf fiber in $S^3$. Hence, without loss, we can suppose that $\gamma_{p_1}^2$ and $\gamma_{p_2}^2$ lift to $\widetilde{\gamma}_1$, $\widetilde{\gamma}_2$ with images $\{(a,b,0,0)\}, \{(0,0,c,d)\}$ in $S^3 \subset \R^4$, respectively. Thus $sl(\widetilde{\gamma}_1) = sl(\widetilde{\gamma}_2) = -1$. A simple computation shows that $lk(\widetilde{\gamma}_1,\widetilde{\gamma}_2) = 1$, and so, we conclude
\begin{equation}\label{lks}
sl(\gamma_{p_i}^2) = -2, \ \text{for} \ i=1,2 \quad \text{and} \quad lk(\gamma_{p_1}^2,\gamma_{p_2}^2) = 2.
\end{equation}
Therefore,
$$Q_\tau(S_0) =  -2(m_1-n_1)^2 + 4(m_1-n_1)(m_2-n_2) -2(m_2-n_2).$$
Furthermore, the remaining terms on $Q_\tau(Z)$ are given by
\begin{eqnarray*}
Q_\tau(S_0,n_1C_{\gamma_{p_1}^2}) &=& (m_1-n_1)n_1 sl(\gamma_{p_1}^2) + (m_2-n_2)n_1 lk(\gamma_{p_1}^2,\gamma_{p_2}^2) \\ &=& -2(m_1-n_1)n_1 + 2(m_2-n_2)n_1
\end{eqnarray*}
and
\begin{eqnarray*}
Q_\tau(S_0,n_2C_{\gamma_{p_2}^2}) &=& (m_2-n_2)n_2 sl(\gamma_{p_2}^2) + (m_1-n_1)n_2 lk(\gamma_{p_1}^2,\gamma_{p_2}^2) \\ &=& -2(m_2-n_2)n_2 + 2(m_1-n_1)n_2.
\end{eqnarray*}
Finally, putting these values in  \eqref{qtauZ}, yields
\begin{eqnarray*}
Q_\tau(Z) &=& \frac{1}{4}(-2(m_1-n_1)^2 + 4(m_1-n_1)(m_2-n_2) -2(m_2-n_2) \\ &+& 2(-2(m_1-n_1)n_1 + 2(m_2-n_2)n_1)  \\ &+& 2(-2(m_2-n_2)n_2 + 2(m_1-n_1)n_2)) \\ &=& -\frac{m_1^2}{2} - \frac{m_2^2}{2} + \frac{n_1^2}{2} + \frac{n_2^2}{2} +m_1m_2 - n_1n_2.
\end{eqnarray*}
Recalling the Conley-Zehnder indices in \eqref{CZ}, we obtain
\begin{eqnarray}
I(\alpha,\beta) &=& c_\tau(Z) +Q_\tau(Z) +CZ_\tau(\alpha) - CZ_\tau(\beta) \nonumber \\ &=& -\frac{m_1^2}{2} - \frac{m_2^2}{2} + \frac{n_1^2}{2} + \frac{n_2^2}{2} +m_1m_2 - n_1n_2 \nonumber \\ &+& \sum_{j=1}^{m_1} 2j-1 + \sum_{j=1}^{m_2} 2j+1 - \sum_{j=1}^{n_1} 2j-1 - \sum_{j=1}^{n_2} 2j+1 \nonumber \\ &=& \frac{m_1^2}{2} +\frac{m_2^2}{2} -\frac{n_1^2}{2} - \frac{n_2^2}{2} +2m_2 - 2n_2 +m_1m_2 - n_1n_2, \label{done}
\end{eqnarray}
since $\sum_{j=1}^a 2j-1 = a^2$ and $\sum_{j=1}^a 2j+1 = a^2+2a$. The expression in \eqref{done} is equal to the expression in the statement.
\end{proof}
\end{prop}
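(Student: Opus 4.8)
The plan is to evaluate the defining formula \eqref{echindex}, namely $I(\alpha,\beta) = c_\tau(Z) + Q_\tau(Z) + CZ^I_\tau(\alpha) - CZ^I_\tau(\beta)$, term by term. Since $H_2(Y)=0$ by \eqref{homologys2}, there is a unique relative class $Z \in H_2(Y,\alpha,\beta)$, so $I(\alpha,\beta)$ is well defined as an integer with no ambiguity term to worry about. Two of the four contributions are immediate: because the global trivialization $\tau = \partial_\theta$ satisfies $c_1(\xi)=0$, the relative Chern term vanishes, $c_\tau(Z)=0$; and the Conley--Zehnder contributions are read off directly from \eqref{CZesfera}, giving $CZ^I_\tau(\alpha) = \sum_{k=1}^{m_1}(2k-1) + \sum_{k=1}^{m_2}(2k+1)$ and similarly for $\beta$. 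Using $\sum_{k=1}^a (2k-1) = a^2$ and $\sum_{k=1}^a (2k+1) = a^2 + 2a$, these collapse to $m_1^2 + m_2^2 + 2m_2$ and $n_1^2 + n_2^2 + 2n_2$. The entire problem therefore reduces to computing the relative self-intersection number $Q_\tau(Z)$.

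The difficulty in computing $Q_\tau(Z)$ is that the individual orbits $\gamma_{p_1},\gamma_{p_2}$ are not nullhomologous — they generate the nontrivial class in $H_1(Y)\cong\Z_2$ — so $Q_\tau$ cannot be read off directly as a linking pairing of Seifert surfaces. The way around this is to pass to the doubled orbits $\gamma_{p_i}^2$, which \emph{are} nullhomologous, and exploit the quadratic behaviour of $Q_\tau$ in \eqref{qtauquadratic}. Concretely, I would write $Q_\tau(Z) = \tfrac14 Q_\tau(2Z)$ and represent $2Z$ by $S_0 + n_1 C_{\gamma_{p_1}^2} + n_2 C_{\gamma_{p_2}^2}$, where $C_\gamma = \R\times\gamma$ is the trivial cylinder and $S_0$ represents the class in $H_2(Y,\gamma_{p_1}^{2(m_1-n_1)}\gamma_{p_2}^{2(m_2-n_2)},\emptyset)$. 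Expanding by \eqref{qtauquadratic} converts $Q_\tau(Z)$ into a combination of the self-pairings $Q_\tau(Z_i)$ of $\gamma_{p_i}^2$ and the mutual pairing $Q_\tau(Z_1,Z_2)$; since $c_\tau$ vanishes, these coincide with the self-linking numbers $sl(\gamma_{p_i}^2)$ and the linking number $lk(\gamma_{p_1}^2,\gamma_{p_2}^2)$, respectively.

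The crux — and the step I expect to be the main obstacle — is evaluating these linking invariants. Here I would use the double cover $\psi\colon S^3 \to Y$ of Lemma \ref{doblecov}, under which each doubled orbit $\gamma_{p_i}^2$ lifts to a single Hopf fiber $\widetilde{\gamma}_i \subset S^3$; after a rotation these may be taken to be the standard circles $\{(a,b,0,0)\}$ and $\{(0,0,c,d)\}$ in $S^3 \subset \R^4$. For these one knows the standard values $sl(\widetilde{\gamma}_i)=-1$ and $lk(\widetilde{\gamma}_1,\widetilde{\gamma}_2)=1$. The delicate point is transporting these numbers back down through the degree-two covering: because $\psi$ is two-to-one, the Seifert-surface count downstairs picks up a factor of two, yielding $sl(\gamma_{p_i}^2)=-2$ and $lk(\gamma_{p_1}^2,\gamma_{p_2}^2)=2$. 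Justifying this covering factor with care — as opposed to the linking data upstairs, which is classical — is where the real work lies.

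With the linking numbers in hand the remaining assembly is routine algebra. Substituting $sl(\gamma_{p_i}^2)=-2$ and $lk=2$ into the expanded expression gives $Q_\tau(Z) = -\tfrac{m_1^2}{2} - \tfrac{m_2^2}{2} + \tfrac{n_1^2}{2} + \tfrac{n_2^2}{2} + m_1m_2 - n_1n_2$. Adding the Conley--Zehnder contribution and collecting terms yields $I(\alpha,\beta) = \tfrac{m_1^2}{2} + \tfrac{m_2^2}{2} - \tfrac{n_1^2}{2} - \tfrac{n_2^2}{2} + 2m_2 - 2n_2 + m_1m_2 - n_1n_2$; one then checks this equals the stated closed form by writing $M = m_1+m_2$, $N = n_1+n_2$ and observing $\tfrac12(M-N)^2 + (M-N)N = \tfrac12 M^2 - \tfrac12 N^2$.
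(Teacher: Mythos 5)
Your proposal is correct and follows essentially the same route as the paper: the same doubling trick $Q_\tau(Z)=\tfrac14 Q_\tau(2Z)$ with the representative $S_0+n_1C_{\gamma_{p_1}^2}+n_2C_{\gamma_{p_2}^2}$, the same identification of $Q_\tau(Z_i)$ and $Q_\tau(Z_1,Z_2)$ with $sl(\gamma_{p_i}^2)$ and $lk(\gamma_{p_1}^2,\gamma_{p_2}^2)$ via the vanishing of $c_\tau$, the same lift through the double cover of Lemma \ref{doblecov} to Hopf fibers giving $sl(\gamma_{p_i}^2)=-2$ and $lk(\gamma_{p_1}^2,\gamma_{p_2}^2)=2$, and the same final assembly with the Conley--Zehnder sums. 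The factor-of-two through the covering, which you rightly flag as the delicate point, is exactly the step the paper asserts in \eqref{lks}, and your heuristic for it is sound.
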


As we noted in \S \ref{grading}, we can define an absolute $\Z$ grading on $ECC_*^T(Y,\lambda_{\varepsilon(T)},0)$ by letting
$$|\alpha| = I(\alpha,\emptyset).$$
Thus, if $\alpha=\{(\gamma_{p_1},m_1),(\gamma_{p_2},m_2)\}$, it follows from \eqref{ahomologia} and Proposition \ref{echindex2}, that
\begin{equation} \label{grading2}
|\alpha| = \frac{1}{2}(m_1+m_2)^2 + 2m_2,
\end{equation}
whenever $m_1+m_2$ is an even number. Now it is a direct verification from equation \eqref{grading2} that the sequence of generators in homology class $\Gamma = 0$ ordered by (increasing) grading and starting at index $0$ is
\begin{equation}\label{ordergen}
1, \gamma_{p_1}^2,\gamma_{p_1} \gamma_{p_2}, \gamma_{p_2}^2, \gamma_{p_1}^4, \gamma_{p_1}^3 \gamma_{p_2}, \gamma_{p_1}^2 \gamma_{p_2}^2, \gamma_{p_1} \gamma_{p_2}^3, \gamma_{p_2}^4, \ldots
\end{equation}

\subsection{Direct limit}
In this section, we follow \cite{nelson2020embedded} and summarize the computation of $ECH_*(S^*S^2,\xi,\Gamma)$. All the results here follows directly as the analogous results in the reference.

\begin{prop}(cf. \cite[Proposition 3.2] {nelson2020embedded}) \label{dirlim}
Let $Y, \lambda$ and $\varepsilon(T)$ as discussed previously, for any $\Gamma \in \Z_2 \cong H_1(Y)$, the filtered embedded contact homology groups $ECH_*^T(Y, \lambda_{\varepsilon(T)}, \Gamma)$ form a direct system. The direct limit $\lim_{T\to\infty}ECH_*^T(Y, \lambda_\varepsilon, \Gamma)$ is the homology of the chain complex generated by the orbit sets $\{(\gamma_{p_1},m_1),(\gamma_{p_2},m_2)\}$, such that $m_1+m_2 \equiv \Gamma \mod 2$.
\end{prop}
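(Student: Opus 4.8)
The plan is to follow the direct-limit argument of Nelson--Weiler \cite{nelson2020embedded}, taking advantage of the fact that the previous subsections have already made the filtered chain complexes completely explicit. First I would record the decisive simplification: by the Conley--Zehnder computation \eqref{CZesfera}, every generator of $ECC_*^T(Y,\lambda_{\varepsilon(T)},\Gamma,J)$ is a product of iterates of the two elliptic orbits $\gamma_{p_1},\gamma_{p_2}$, so by the index parity property in Proposition \ref{ECHind} the ECH index between any two generators is even and the differential vanishes identically. Hence, for each threshold $T$, the filtered homology $ECH_*^T(Y,\lambda_{\varepsilon(T)},\Gamma)$ is canonically identified with the $\Z_2$-span of the orbit sets $\{(\gamma_{p_1},m_1),(\gamma_{p_2},m_2)\}$ of action below $T$ satisfying the parity condition $m_1+m_2\equiv\Gamma \mod 2$ of \eqref{ahomologia}, graded by Proposition \ref{echindex2}.

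Next I would set up the direct system. For $T<T'$ the generators of action below $T$ form a subset of those of action below $T'$, and the inclusion of filtered subcomplexes induces a map $ECH_*^T\to ECH_*^{T'}$. The delicate point is that the two groups are a priori computed with different perturbation parameters $\varepsilon(T)$ and $\varepsilon(T')$ dictated by Lemma \ref{bourg}. To organize the inclusion maps into a coherent direct system, I would invoke the invariance of filtered ECH: for a fixed $T$, as long as $\varepsilon$ is small enough that no Reeb orbit of $\lambda_\varepsilon$ has action exactly $T$, the group $ECH_*^T(Y,\lambda_\varepsilon,\Gamma)$ is independent of $\varepsilon$, because shrinking $\varepsilon$ moves no orbit across the action window and the Taubes isomorphism identifies the resulting filtered groups canonically. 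This yields well-defined groups attached to the degenerate Zoll form $\lambda$, together with compatible inclusion maps.

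Finally I would pass to the limit. Because direct limits are exact and the differential is zero at every finite stage, the limit $\lim_{T\to\infty}ECH_*^T(Y,\lambda_\varepsilon,\Gamma)$ is the homology of the colimit chain complex, which by construction is freely generated over $\Z_2$ by all orbit sets $\{(\gamma_{p_1},m_1),(\gamma_{p_2},m_2)\}$ with $m_1+m_2\equiv\Gamma\mod 2$; by the Hutchings--Taubes framework this direct limit also computes $ECH_*(Y,\xi,\Gamma)$.

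I expect the main obstacle to be the middle step: verifying that the family $\{ECH_*^T\}_T$ together with the inclusion-induced maps genuinely forms a direct system despite the $T$-dependence of the perturbation $\varepsilon(T)$, i.e.\ that the filtered groups stabilize in $\varepsilon$ and the maps are mutually compatible. Since everything here is formally identical to \cite[Proposition 3.2]{nelson2020embedded}, I would carry out this verification by citing their argument and checking that our orbit and index data from \eqref{CZesfera} and Proposition \ref{echindex2} satisfy the same hypotheses, rather than reproving the invariance of filtered ECH from scratch.
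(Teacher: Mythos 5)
Your proposal is correct and follows essentially the same route as the paper, which gives no independent proof of this proposition but simply cites \cite[Proposition 3.2]{nelson2020embedded} after having established the vanishing of the differential and the orbit/index data in the preceding subsections. Your reconstruction of the cited argument --- vanishing differential via index parity, inclusion-induced maps, invariance of filtered ECH in $\varepsilon$ (a Hutchings--Taubes result, strictly speaking, rather than the unfiltered Taubes isomorphism), and exactness of direct limits --- is exactly the content being invoked, and deferring the $\varepsilon(T)$-compatibility verification to Nelson--Weiler is precisely what the authors do.
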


\begin{prop}(cf. \cite[Theorem 7.1]{nelson2020embedded}) \label{limit}
For $Y = S^*S^2$, the contact form $\lambda_\varepsilon$, and $\varepsilon = \varepsilon(T)$ as in \eqref{lambdaepsilon}, and Lemma \ref{bourg}, respectively,
$$\lim_{T\to\infty}ECH_*^T(Y, \lambda_\varepsilon, \Gamma) = ECH_*(Y,\lambda, \Gamma),$$
for both $\Gamma \in H_1(Y) \cong \Z_2$.
\end{prop}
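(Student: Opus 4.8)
The plan is to deduce Proposition \ref{limit} from the Hutchings--Taubes isomorphism between embedded contact homology and Seiberg--Witten Floer cohomology \cite{taubes2010embedded}, following the strategy used by Nelson \cite{nelson2020embedded} for prequantization bundles. For a \emph{nondegenerate} contact form $\lambda'$ one has $ECH_*(Y,\xi,\Gamma)\cong\lim_{L\to\infty} ECH^L_*(Y,\lambda',\Gamma)$, with maps induced by the inclusions of action-filtered subcomplexes. The obstruction to quoting this directly is that $\lambda$ is degenerate and that no single perturbation $\lambda_\varepsilon$ is nondegenerate at all actions: Lemma \ref{bourg} only provides nondegeneracy up to action $T$ once $\varepsilon\le\varepsilon(T)$, and $\varepsilon(T)\to 0$ as $T\to\infty$. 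One therefore works with the whole \emph{family} $\{\lambda_{\varepsilon(T)}\}$ and must show that the filtered groups assemble into a direct system whose limit is the topological invariant $ECH_*(Y,\xi,\Gamma)$.

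First I would record a stabilization statement. The action of an orbit set $\{(\gamma_{p_1},m_1),(\gamma_{p_2},m_2)\}$ for $\lambda_\varepsilon$ is $2\pi(m_1+m_2)+2\pi\varepsilon\bigl(m_1 f(p_1)+m_2 f(p_2)\bigr)$, which tends to $2\pi(m_1+m_2)$ as $\varepsilon\to0$. Hence for any threshold $L=2\pi N+\pi$ and all sufficiently small $\varepsilon$, the generators of action below $L$ are exactly the orbit sets with $m_1+m_2\le N$, independently of $\varepsilon$. Because every such generator is elliptic, the Conley--Zehnder indices \eqref{CZesfera} and the index parity property force the differential to vanish, so $ECH^L_*(Y,\lambda_\varepsilon,\Gamma)$ is the free $\Z_2$--module on these generators and is \emph{canonically independent of the small parameter}. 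This is what makes the maps $ECH^{T}_*(Y,\lambda_{\varepsilon(T)},\Gamma)\to ECH^{T'}_*(Y,\lambda_{\varepsilon(T')},\Gamma)$ of Proposition \ref{dirlim} well defined, since below the lower action window the two complexes are identified on the nose and above it one merely adjoins generators.

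Next I would identify the resulting direct limit with $ECH_*(Y,\xi,\Gamma)$. For each $T$ the Hutchings--Taubes construction yields a map $ECH^{T}_*(Y,\lambda_{\varepsilon(T)},\Gamma)\to\widehat{HM}^{-*}(Y,\mathfrak{s}_\xi+PD(\Gamma))$ that is compatible with raising the action bound. The point is that these maps are also compatible with the stabilization identifications above, so they descend to the direct limit; by the Hutchings--Taubes isomorphism the resulting map is an isomorphism onto $ECH_*(Y,\xi,\Gamma)$. Since the generators, their actions, the vanishing of the differential, and the indices \eqref{CZesfera} are the exact analogues of those appearing in \cite{nelson2020embedded}, the argument there transfers essentially verbatim.

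The step I expect to be the main obstacle is this last compatibility: one must verify that the cobordism (continuation) maps induced by the deformation $\lambda_{\varepsilon(T)}\rightsquigarrow\lambda_{\varepsilon(T')}$ respect the action filtration and agree, under the Hutchings--Taubes isomorphism, with the identity on Seiberg--Witten Floer cohomology. This is exactly what guarantees that the limit does not depend on the chosen sequence $\varepsilon(T)\to0$ and that it computes the topological invariant rather than an artifact of the perturbation scheme. This input is imported from \cite{nelson2020embedded} and the foundational Hutchings--Taubes theory rather than checked by hand here.
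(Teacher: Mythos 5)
Your proposal is correct and follows essentially the same route as the paper, which proves Proposition \ref{limit} simply by deferring to the analogous result \cite[Theorem 7.1]{nelson2020embedded} and the underlying Hutchings--Taubes filtered ECH machinery, exactly the inputs you invoke. The stabilization details you spell out (the action formula $2\pi(m_1+m_2)+2\pi\varepsilon(m_1f(p_1)+m_2f(p_2))$, the $\varepsilon$-independence of low-action generators, and the vanishing differential) are left implicit in the paper but are consistent with it, and the compatibility of the continuation/cobordism maps that you flag as the main obstacle is precisely what the paper, like you, imports from \cite{nelson2020embedded} rather than verifying by hand.
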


\begin{prop}\label{echsphere}
Let $S^*S^2$ be the unit cotangent bundle of the round sphere with the standard metric, $\lambda$ be the tautological form, and $\xi = \ker \lambda$. Then the $\Z$-graded $ECH$ of it is given by
\begin{align*}
ECH_*(S^*S^2,\xi,\Gamma) = \begin{cases}
\Z, & \text{if} \ * \in 2\Z_{\geq 0} \\
0, & \text{else}
\end{cases}
\end{align*}
for each $\Gamma \in H_1(S^*S^2)$.
\end{prop}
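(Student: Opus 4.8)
The plan is to combine the direct-limit description of the homology with the explicit grading computed above, reducing the whole statement to a bookkeeping of generators by degree. By Proposition \ref{limit} we have $ECH_*(S^*S^2,\xi,\Gamma)=\lim_{T\to\infty}ECH_*^T(Y,\lambda_\varepsilon,\Gamma)$, and by Proposition \ref{dirlim} this limit is the homology of the chain complex freely generated by the orbit sets $\gamma_{p_1}^{m_1}\gamma_{p_2}^{m_2}$ with $m_1+m_2\equiv\Gamma\pmod 2$. Every such generator is built from the elliptic orbits $\gamma_{p_1},\gamma_{p_2}$, whose iterates all have odd Conley--Zehnder index by \eqref{CZesfera}; so by the index parity property of Proposition \ref{ECHind} the index $I(\alpha,\beta)$ between any two generators in a fixed class is even. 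Since the differential counts $I=1$ curves, it vanishes identically, and the homology coincides with the chain complex as a graded group (with $\Z$ coefficients, exactly as in the $S^3$ computation of \S\ref{esferas3}). It then remains only to verify that in each homology class the generators occupy precisely the even non-negative degrees, one per degree.

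For the class $\Gamma=0$ I would use the absolute grading \eqref{grading2}, namely $|\alpha|=\tfrac12(m_1+m_2)^2+2m_2$. Writing $m_1+m_2=2\ell$ and letting $m_2$ run over $0,1,\dots,2\ell$, the $2\ell+1$ generators of this total multiplicity carry the gradings $2\ell^2,\,2\ell^2+2,\,\dots,\,2\ell^2+4\ell$. Since $2\ell^2+4\ell=2(\ell+1)^2-2$, these are exactly the even integers in the interval $[2\ell^2,\,2(\ell+1)^2-2]$, so as $\ell$ ranges over $\Z_{\geq 0}$ the levels tile $2\Z_{\geq 0}$ bijectively, with no gaps and no repetitions. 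This produces one copy of $\Z$ in each even non-negative degree and nothing in odd degrees.

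For the class $\Gamma=1$ there is no empty generator, so I would fix the base point $\beta=\gamma_{p_1}$ and read off the absolute grading directly from Proposition \ref{echindex2}: for $\alpha=\gamma_{p_1}^{m_1}\gamma_{p_2}^{m_2}$ with $m_1+m_2=2\ell+1$ one obtains $|\alpha|=I(\alpha,\gamma_{p_1})=2\ell^2+2\ell+2m_2$. As $m_2$ runs over $0,1,\dots,2\ell+1$ these gradings fill the even integers from $2\ell^2+2\ell$ up to $2\ell^2+6\ell+2$, while the next level begins at $2(\ell+1)^2+2(\ell+1)=2\ell^2+6\ell+4$; hence the levels again tile $2\Z_{\geq 0}$ with exactly one generator per even degree. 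Thus both homology classes yield $\Z$ in every even non-negative degree and $0$ otherwise, as claimed.

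I expect the genuinely substantive inputs, the direct-limit identification (Propositions \ref{dirlim} and \ref{limit}) and the index formula (Proposition \ref{echindex2}), to already be in place, so the only care required is twofold. First, one must confirm that an absolute $\Z$-grading is globally defined, which holds because $c_1(\xi)=0$ and $H^2(Y;\Z)$ is torsion, so that $c_1(\xi)+2\,PD(\Gamma)$ is torsion and \S\ref{grading} applies. Second, one must check that the tiling argument accounts for all generators and double-counts none; this is the step most prone to an off-by-one error, and I would verify it by comparing the first few levels explicitly against the ordered list \eqref{ordergen}.
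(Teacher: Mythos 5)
Your proposal is correct, and its skeleton matches the paper's proof: both reduce the statement, via Propositions \ref{dirlim} and \ref{limit} and the vanishing of the differential (all generators elliptic, index parity), to showing that the ECH index of Proposition \ref{echindex2} puts the generators in each class $\Gamma$ in bijection with $2\Z_{\geq 0}$. Where you diverge is at that final step: the paper does not compute the bijection at all, but instead observes that its index agrees with the ECH index of \cite[Proposition 3.5]{nelson2020embedded} for $L(2,1)$ as a prequantization bundle and imports the bijection from \cite[Theorem 7.6]{nelson2020embedded} with Euler number $e=-2$. You instead prove the bijection by hand: for $\Gamma=0$ the tiling $\{2\ell^2,2\ell^2+2,\dots,2(\ell+1)^2-2\}$ from \eqref{grading2}, and for $\Gamma=1$ the gradings $I(\alpha,\gamma_{p_1})=2\ell^2+2\ell+2m_2$ relative to the base generator $\gamma_{p_1}$ (your arithmetic checks out in both cases, and agrees with \eqref{ordergen} and its analogue in class $1$; your handling of the two prerequisites — the $\Z$-grading being well defined since $c_1(\xi)=0$ and $H^2(Y;\Z)\cong\Z_2$ is torsion, and the choice of base generator in the odd class where the empty set is unavailable — is exactly the care required). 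Your route buys a self-contained, elementary verification covering both homology classes explicitly; the paper's route buys brevity and, more substantively, establishes the identification of its index with the prequantization-bundle index of $L(2,1)$, which the paper reuses afterwards (e.g.\ for the $U$ map via Seiberg--Witten theory and for the $\R P^2$ case with $e=-4$), so the citation is doing double duty there. One cosmetic remark: like the paper, you state the answer with $\Z$ coefficients although the chain complex was set up over $\Z_2$; since the differential vanishes and all generators are even-graded, the computation is coefficient-independent, but it is worth a sentence if you want the statement over $\Z$ to be airtight.
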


\begin{proof}
By Proposition \ref{dirlim} and Proposition \ref{limit} and recalling that our filtered chain complex coincides with the filtered ECH homology, it is enough to show that the ECH index in Proposition \ref{echindex2} gives a bijection between the chain complex generators and the nonnegative even integers $2\Z_{\geq 0}$. Our index agrees with the ECH index in \cite[Proposition 3.5]{nelson2020embedded} for $Y = L(2,1)$ as a prequantization bundle, so this bijection follows from the one found in \cite[Theorem 7.6]{nelson2020embedded} with Euler number $e=-2$.
\end{proof}

\begin{rmk}\label{rmkz}
Since $S^*S^2$ and $L(2,1)$ are diffeomorphic, $ECH_*(S^*S^2,\xi,\Gamma)$ can be computed using the isomorphism with Seiberg--Witten Floer cohomology and the results in \cite{kronheimer2007monopoles}. Alternatively, it follows from \cite{abbondandolo2017systolic} that the orbit space quotient $\mathfrak{q}\colon S^*S^2 \to \SS \cong S^2$ is a prequantization bundle. So Proposition \ref{echsphere} follows from \cite{nelson2020embedded}.
\end{rmk}

\subsection{U map and ECH spectrum}\label{umapspec}
The goal of this section is to describe the $U$ map for the direct limit of the direct system consisting in our filtered ECH groups $ECH_*^T(Y, \lambda_{\varepsilon(T)},0)$. After that, we compute the ECH spectrum $c_k(S^*S^2,\lambda)$ by means of a limiting argument.
\

Note that Proposition \ref{limit} and Proposition \ref{echsphere} show that $\lim_{T\to\infty}ECH_*^T(Y, \lambda_\varepsilon,0)$ has exactly one generator of grading $2k$ for each $k \in \Z_{\geq 0}$; denote this generator by $\zeta_k$. Moreover, as we mentioned in Remark \ref{rmk:umap}, our $U$ map coincides with the $U$ map on Seiberg-Witten Floer cohomology of the lens space $L(2,1)$ whose is already known \cite{kronheimer2007monopoles,kronheimer2007monopolesb}. Hence, we have the following Proposition. 

\begin{prop}\label{Umap1}
For any almost complex structure $J$, the $U$ map on the direct limit is given by
$$U\zeta_k = \zeta_{k-1}, \quad k \in \Z_{\geq 1}.$$
\end{prop}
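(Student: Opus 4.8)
The plan is to reduce the computation to a grading constraint together with the topological invariance of the $U$ map. First I would record the degree bookkeeping. By construction the $U$ map lowers the absolute $\Z$-grading by $2$, so for $k\geq 1$ it carries the one-dimensional summand $ECH_{2k}(S^*S^2,\xi,0)=\Z\langle\zeta_k\rangle$ into $ECH_{2k-2}(S^*S^2,\xi,0)$, which by Proposition \ref{echsphere} equals $\Z\langle\zeta_{k-1}\rangle$. Hence $U\zeta_k$ is automatically a multiple of $\zeta_{k-1}$ (over $\Z_2$ it is either $0$ or $\zeta_{k-1}$), and the whole content of the proposition is that this multiple is a unit, i.e.\ that $U$ restricts to an isomorphism $ECH_{2k}\xrightarrow{\sim}ECH_{2k-2}$ for every $k\geq 1$. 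Thus the only thing to prove is nonvanishing of $U$ at each step.

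To establish nonvanishing I would invoke the topological invariance of the $U$ map. By Remark \ref{rmk:umap} and Taubes' theorem, under the identification $S^*S^2\cong\R P^3\cong L(2,1)$ the ECH $U$ map on $ECH_*(S^*S^2,\xi,0)$ agrees with the $U$ map on the Seiberg--Witten Floer (co)homology $\widehat{HM}$ of $L(2,1)$ in the $\mathrm{spin}^c$ structure matching $\Gamma=0$. The $\Z[U]$-module structure of $\widehat{HM}$ for lens spaces is computed in \cite{kronheimer2007monopoles,kronheimer2007monopolesb}: since $L(2,1)$ is an $L$-space, each $\mathrm{spin}^c$ structure contributes a single semi-infinite tower $\mathcal{T}^+$ on which $U$ acts surjectively, hence isomorphically in every degree above the bottom generator. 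Transporting this through Taubes' isomorphism shows $U\colon ECH_{2k}\to ECH_{2k-2}$ is an isomorphism for all $k\geq 1$, which promotes the multiple above to $U\zeta_k=\zeta_{k-1}$ (the bottom generator $\zeta_0=[\emptyset]$ being killed, consistent with the range $k\geq 1$).

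One preliminary point I would make explicit is that the $U$ map in the statement is the one induced on the direct limit $\lim_{T\to\infty}ECH^T_*(Y,\lambda_\varepsilon,0)$, so I must check it coincides with the genuine ECH $U$ map of $(Y,\lambda)$ to which the Seiberg--Witten comparison applies. This holds because the chain-level $U$ map (counting index-$2$ currents through a base point) is compatible with the maps of the direct system of filtered ECH groups, so passing to the limit exactly as in Proposition \ref{limit} recovers the $U$ map on $ECH_*(Y,\xi,0)$.

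The main obstacle is not the grading bookkeeping, which is immediate, but confirming that the Seiberg--Witten tower really produces isomorphisms with \emph{no} vanishing step in precisely the gradings occupied by the $\zeta_k$. Concretely, one must match the absolute $\Z$-grading used here (normalized by $|\alpha|=I(\alpha,\emptyset)$, with $\zeta_0=[\emptyset]$ at degree $0$) with the grading on $\widehat{HM}(L(2,1))$, and verify that the $\mathrm{spin}^c$ tower is the one aligned with $\Gamma=0$, so that the known surjectivity of $U$ on $\mathcal{T}^+$ translates into the claimed shift $\zeta_k\mapsto\zeta_{k-1}$. Once invariance of the $U$ map and this grading identification are in place, the conclusion follows at once.
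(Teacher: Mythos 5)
Your proposal is correct and is essentially the paper's own justification of this proposition: the paper likewise deduces it from Remark \ref{rmk:umap} (Taubes' identification of the ECH $U$ map with the Seiberg--Witten $U$ map) together with the known $\widehat{HM}$ computation for $L(2,1)$ in \cite{kronheimer2007monopoles,kronheimer2007monopolesb}, with the same grading bookkeeping making nonvanishing the only issue. The paper's subsequent holomorphic-curve count (Lemma \ref{Umap2}, via gradient flow lines and meromorphic $1$-forms on $S^2$) is offered only as a second, independent approach, so your argument matches the primary one.
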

Now we present a second approach to prove Proposition \ref{Umap1} following closely the arguments in \cite[\S 4.1]{hutchings2014lecture}. In fact, this follows from the generators grading order in \eqref{ordergen} and the next Lemma.

\begin{lemma} \label{Umap2}
The $U$ map on $\lim_{T\to\infty}ECH_*^T(Y, \lambda_\varepsilon, 0)$ is given by:
\begin{enumerate}[label=\alph*)]
\item $U(\gamma_{p_1}^i \gamma_{p_2}^j) = \gamma_{p_1}^{i+1}\gamma_{p_2}^{j-1}$, if $j >0$. \label{(a)}
\item $U(\gamma_{p_1}^i) = \gamma_{p_2}^{i-2}$, if $i>0$. \label{(b)}
\end{enumerate}
\begin{proof}
The independence of $J$ follows from the same argument in \cite[Proposition 4.1]{hutchings2014lecture}. Let $i+j > 0$. If $\mathcal{C}$ is a $J$-holomorphic current counted in $U(\gamma_{p_1}^i\gamma_{p_2}^j)$ then, by Proposition \ref{lowind}, we can write $\mathcal{C} = C_0 \sqcup C_2$, where $C_0$ is a union of trivial cylinders with multiplicities, and $C_2$ is embedded with $\text{ind}(C_2)=I(C_2) = 2$. We ignore the trivial cylinders part $C_0$. Moreover, we can take a representative in our direct system such that the rotation angles of $\gamma_{p_1}$ and $\gamma_{p_2}$, with respect the trivialization $\tau$ of $\xi|_{\gamma_{p_1} \cup \gamma_{p_2}}$, represent classes in $\R/\Z$ very close to $0$ and $1$, respectively. Partition conditions (see \cite[section \S 3.9]{hutchings2014lecture}) guarantee that $C_2$ has at most one positive end at a cover of $\gamma_{p_1}$, all negative ends of $C_2$ at covers of $\gamma_{p_1}$ have multiplicity $1$, and the opposite for $\gamma_{p_2}$. \\ 
\textbf{Case $j=0$:} Let $i>2$. By the latter fact and generators grading order \eqref{ordergen}, $C_2$ has exactly one negative end at $\gamma_{p_2}^{i-2}$, yielding
\begin{eqnarray*}
2 = \text{ind}(C_2) &=& -\chi(C_2) + CZ_\tau(\gamma_{p_1}^i) -  CZ_\tau(\gamma_{p_2}^{i-2}) \\ &=&-\chi(C_2) + 2i - 1 - (2(i-2)+1) \\ &=& -\chi(C_2) + 2,
\end{eqnarray*}
and thus $C_2$ is a cylinder between with positive end at $\gamma_{p_1}^i$ and negative end $\gamma_{p_2}^{i-2}$. If $i=2$, Fredholm index equation yields $\chi(C_2) = 1$ and so, $C_2$ is a plane bounding $\gamma_{p_1}^2$. \\
\textbf{Case $j>0$:} First let $i=0$ and $j>1$. Then $C_2$ has $j$ positive ends at $\gamma_{p_2}$, a negative end at $\gamma_{p_2}^{j-1}$ and a negative end at $\gamma_{p_1}$. Then
\begin{eqnarray*}
2 = \text{ind}(C_2) &=& -\chi(C_2) + 3j - (2(j-1)+1 + 1) \\ &=& 2g(C_2) - 2 +j +2 + j,
\end{eqnarray*}
which leads to the contradiction $g(C_2) = 1-j<0$. \\
If $i,j>1$, $C_2$ has a positive end at $\gamma_{p_1}^i$, $j$ positive ends at $\gamma_{p_2}$, $i+1$ negative ends at $\gamma_{p_1}$ and a negative end at $\gamma_{p_2}^{j-1}$. So
\begin{eqnarray*}
2 = \text{ind}(C_2) &=& -\chi(C_2) + 3j + 2i-1 - (2(j-1)+1 + i+1) ) \\ &=& 2g(C_2) - 2 +j +i+3 + j + i - 1 \\ &=& 2g(C_2) +2i+2j,
\end{eqnarray*}
yielding the contradiction $g(C_2) = 1 - i - j<0$. \\
If $j=1$ and $i>0$, $C_2$ has a positive end at $\gamma_{p_1}^i$, a positive end at $\gamma_{p_2}$ and $i+1$ negative ends at $\gamma_{p_1}$. This way,
\begin{eqnarray*}
2 = \text{ind}(C_2) &=& -\chi(C_2) + 3 + 2i-1 - (i+1) \\ &=& 2g(C_2) - 2 +i+3 + 1 + i \\ &=& 2g(C_2) +2i + 2,
\end{eqnarray*}
giving the contradiction $g(C_2) = 1-i<0$. \\
The remaining possibility is $i=0$ and $j=1$. In this case, $C_2$ has a positive end at $\gamma_{p_2}$ and a negative end at $\gamma_{p_1}$, thus
$$2 = \text{ind}(C_2) = -\chi(C_2) + 3 -1 = -\chi(C_2) +2,$$
concluding that $C_2$ is a cylinder from $\gamma_{p_2}$ to $\gamma_{p_1}$. \\
Therefore, to prove this Lemma we need to count $J$-holomorphic cylinders $C$ with a positive end at $\gamma_{p_1}^i$, and a negative end at $\gamma_{p_2}^{i-2}$, planes that bound $\gamma_{p_1}^2$, and cylinders from $\gamma_{p_2}$ to $\gamma_{p_1}$, all of these passing through a fixed base point $(0,z) \in \R \times Y$ such that $z$ is not on any Reeb orbit. It follows from \cite{bourgeois2002morse} and \cite{moreno} that the latter counting is equivalent to counting negative gradient flow lines of $f\colon \SS = S^2 \to \R$ from $p_2$ to $p_1$ passing through a fixed point in $S^2$. Since $z \in Y$ is not on a Reeb orbit, the corresponding point in $S^2$ cannot be $p_1$ or $p_2$. Thus there is a unique flow line that counts and hence, a unique cylinder with a positive end at $\gamma_{p_2}$ and a negative end at $\gamma_{p_1}$ passing through $(0,z)$, so proving the part \ref{(a)}. \

For the remaining counting, we note that the symplectization $\R \times S^*S^2$ can be identified with $T^*S^2\backslash S^2$ viewing $S^2 \subset T^*S^2$ as the zero section, so that the canonical complex structure on $T^*S^2$ corresponds to a symplectization-admissible almost complex structure, and under this identification, our count is equivalent to count the number of meromorphic sections of $T^*S^2$, i.e., meromorphic $1$-forms on $S^2$, having a pole of order $i$ at $p_1$, a zero of order $i-2$ at $p_2$, $i\geq 2$, and no other zeros or poles, passing through a base point in  $T^*S^2\backslash S^2$. By Riemann-Roch theorem, the last number is one, and we are done.
\end{proof}
\end{lemma}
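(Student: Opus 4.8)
The plan is to compute $U$ directly from its definition as a count of ECH index $2$ currents through a generic base point $(0,z)\in\R\times Y$, leaning on Proposition \ref{lowind} to reduce to a single embedded curve and on the explicit grading \eqref{ordergen} to identify the target generator. First I would invoke Proposition \ref{lowind}(iii): any current $\mathcal{C}$ counted by $U$ decomposes as $\mathcal{C}_0\sqcup C_2$, where $\mathcal{C}_0$ is a union of trivial cylinders and $C_2$ is embedded with $\mathrm{ind}(C_2)=I(C_2)=2$. Discarding $\mathcal{C}_0$, the problem becomes one of understanding a single embedded index-$2$ curve whose total positive (resp.\ negative) asymptotics realize the source (resp.\ target) orbit set.

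Next I would pin down the ends of $C_2$ using the partition conditions together with a choice of representative in the direct system for which the rotation angles of $\gamma_{p_1}$ and $\gamma_{p_2}$ represent classes in $\R/\Z$ near $0$ and $1$, respectively. This forces $C_2$ to have at most one positive end covering $\gamma_{p_1}$ while all its negative ends over $\gamma_{p_1}$ are simple, with the roles reversed for $\gamma_{p_2}$. Combined with the fact that $U$ lowers the grading by $2$ and that \eqref{ordergen} exhibits a unique generator in each grading, this determines the target orbit set uniquely and hence dictates the asymptotic data of $C_2$. Feeding that data into the Fredholm index formula \eqref{fredholm} with the Conley--Zehnder values \eqref{CZesfera} constrains $-\chi(C_2)$, i.e.\ the genus and number of ends; in the configurations that do not match formulas (a) and (b) one computes $g(C_2)<0$, a contradiction, so only the asserted cases survive---a cylinder from $\gamma_{p_2}$ to $\gamma_{p_1}$ in case (a), and a cylinder from $\gamma_{p_1}^i$ to $\gamma_{p_2}^{i-2}$ (or, when $i=2$, a plane bounding $\gamma_{p_1}^2$) in case (b).

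It then remains to show each surviving count equals $1$, and this is where the two cases diverge. For part (a) I would use Bourgeois's Morse--Bott correspondence to identify the cylinders through $(0,z)$ with negative gradient flow lines of the Morse function $f\colon\SS\cong S^2\to\R$ from $p_2$ to $p_1$ passing through the image point of $z$; since $z$ lies on no Reeb orbit, that image is neither $p_1$ nor $p_2$, and there is exactly one such flow line. For part (b) I would instead identify $\R\times S^*S^2$ with $T^*S^2\setminus S^2$, arrange the canonical complex structure to be symplectization-admissible, and reinterpret the count as one of meromorphic $1$-forms on $S^2$ having a pole of order $i$ at $p_1$, a zero of order $i-2$ at $p_2$, and no other zeros or poles, through a prescribed point of $T^*S^2\setminus S^2$; Riemann--Roch yields exactly one such form.

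The step I expect to be the main obstacle is precisely these two geometric identifications: verifying that Bourgeois's translation of holomorphic cylinders into gradient flow lines applies in the filtered, Morse--Bott-perturbed setting so that transversality and the base-point constraint behave as expected, and checking that the canonical complex structure on $T^*S^2$ is genuinely symplectization-admissible under the chosen identification, so that the Riemann--Roch count is the right one. The index bookkeeping in the middle paragraph is routine once the partition conditions are in hand; it is the reduction to Morse theory and to meromorphic sections that carries the analytic content.
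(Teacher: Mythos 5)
Your proposal is correct and follows essentially the same route as the paper's proof: the same decomposition $\mathcal{C}=\mathcal{C}_0\sqcup C_2$ via Proposition \ref{lowind}, the same partition conditions with rotation angles near $0$ and $1$, the same Fredholm-index case analysis using \eqref{ordergen} and \eqref{CZesfera} to rule out all configurations except the cylinder from $\gamma_{p_2}$ to $\gamma_{p_1}$ and the cylinder (or plane, when $i=2$) from $\gamma_{p_1}^i$ to $\gamma_{p_2}^{i-2}$, and the same two enumerations, namely the Morse--Bott gradient-flow-line count for part (a) and the meromorphic $1$-form count via Riemann--Roch on $T^*S^2\setminus S^2$ for part (b). The only difference is cosmetic: you organize the end-constraint step by appealing to the grading drop and uniqueness of generators up front, whereas the paper folds this into the case-by-case index computations.
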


Proposition \ref{Umap1} give us that the $U$ map sends the generator in grading $2k$, $\zeta_k$, to the generator in grading $2k-2$, $\zeta_{k-1}$, for $k \in \Z_{\geq 1}$. So we can compute the ECH spectrum using the generators grading order
\begin{equation*}
1, \gamma_{p_1}^2,\gamma_{p_1} \gamma_{p_2}, \gamma_{p_2}^2, \gamma_{p_1}^4, \gamma_{p_1}^3 \gamma_{p_2}, \gamma_{p_1}^2 \gamma_{p_2}^2, \gamma_{p_1} \gamma_{p_2}^3, \gamma_{p_2}^4, \ldots,
\end{equation*}
and the fact that for our original contact form $\lambda$ we have
$$\mathcal{A}(\gamma_{p_1}^{m_1}\gamma_{p_2}^{m_2}) =2\pi(m_1 + m_2).$$
In fact, we recall \eqref{cklim}, and do
\begin{eqnarray*}
c_k(Y,\lambda) &=& \lim_{\varepsilon\to 0} c_k(Y,(1+\varepsilon\mathfrak{q}^*f)\lambda) \\ &=& 2\pi M_2(N(1,1))_k,
\end{eqnarray*}
which proves part \ref{parta} of Theorem \ref{thm:capacities}.

\section{Unit tangent bundle of real projective plane}
Now we explain in a short way how can one does prove essentially the same things that we proved in sphere case but in case of real projective plane $\R P^2$. First of all, recall that $\R P^2$ inherits a standard Zoll metric induced by the standard metric on $S^2$ and the quotient map
\begin{eqnarray*}
q\colon S^2 &\to& \R P^2 = S^2/(p\sim -p) \\
p &\mapsto& [p].
\end{eqnarray*}
We fix this metric to define $Y=S^* \R P^2$. In this case, the tautological contact form $\lambda$ is Zoll and the Reeb orbits have minimal action equal to $\pi$. From now on, we denote the tautological contact form on $S^*S^2$ by $\tilde{\lambda}$ and the associated contact structure by $\tilde{\xi} = \ker \tilde{\lambda}$.

\subsection{Topology} \label{total2}
Now to describe the topology of the total space of the $S^1$-bundle $\pi \colon Y \to \R P^2$, we note that the $2$-covering map $q$ induces a $2$-covering map
\begin{eqnarray*}
\tilde{q}\colon S^*S^2 &\to& Y = S^*\R P^2
\end{eqnarray*}
such that $\tilde{q}^*\lambda = \tilde{\lambda}$. Furthermore, since $S^*S^2 \cong \R P^3 = L(2,1)$, we have a commutative diagram
\[\begin{tikzcd}
S^3 \arrow{d}{2:1} \arrow{dr}{4:1} \\ L(2,1) \arrow{r}{2:1} \arrow[d] & Y  \arrow[d] \\
\ S^2 \arrow{r}{2:1}  & \R P^2
\end{tikzcd}
\]
and in fact, it turns out that $Y$ is diffeomorphic to the lens space $L(4,1)$, see \cite{konno2002unit}. In particular,
\begin{align}\label{homology}
H_*(Y;\Z) = \begin{cases}
\Z, & * = 0, 3 \\
\Z_4, & * = 1 \\
0, & * = \text{otherwise}.
\end{cases}
\end{align}
Moreover, we recall that a simple closed geodesic on $\R P^2$ lifts to half of a great circle on $S^2$, while a doubly covered simple geodesic lifts to a simple geodesic (a whole great circle) on the sphere via $q$. Therefore, a doubly covered Reeb orbit $\gamma^2$ on $Y$ lifts to a simple Reeb orbit $\tilde{\gamma}$ on $(S^*S^2,\tilde{\lambda})$ via $\tilde{q}$. Moreover, since $\tilde{\gamma}$ generates $H_1(S^*S^2) \cong \Z_2$, we have $2[\gamma]$ nonzero in $H_1(Y) \cong \Z_4$, and hence, $[\gamma]$ is a generator of $H_1(Y)$ for any simple Reeb orbit $\gamma$.

\subsection{ECH Index}
We consider again the perturbed contact form $\lambda_\varepsilon = (1+\varepsilon \mathfrak{q}^*f)$, with the quotient map $\mathfrak{q} \colon Y \to \SS$. Lemma \ref{orbitspace2} ensures that $\SS \cong S^2$ again, and thus, the filtered chain complex $ECC_*^T(Y,\lambda_{\varepsilon},J,\Gamma)$ is generated by two orbits $\gamma_{p_1},\gamma_{p_2}$. Furthermore, by last section, the homology class of an orbit set $\alpha = \{(\gamma_{p_1},m_1),(\gamma_{p_2},m_2)\}$ is determined by
\begin{equation}\label{ahomologia2}
\alpha \in \Gamma \in H_1(Y) \cong \Z_4 \Leftrightarrow m_1+m_2 \equiv \Gamma\mod 4.
\end{equation} 

The main difference now is that since $\R P^2$ is nonorientable, the previous global nonvanishing section $\partial_\theta \colon Y \to \xi = \ker \lambda$ does not exist any more. The problem here is that it is not true that every trivialization along a simple Reeb orbit on $S^*S^2$ descends to a trivialization along the image Reeb orbit on $Y$ via $\tilde{q}$. However, we can do a trick. Let $\tau$ be a trivialization along a Reeb orbit $\gamma$ on $Y$ such that the linearization of the Reeb flow turns around once along $\gamma$. Via the $2$-covering $\tilde{q}$, one can lift $\tau$ to a trivialization $\tilde{\tau}$ along a lift $\tilde{\gamma}$ of $\gamma$ on $S^*S^2$ such that the linearization of the Reeb flow now turns twice along $\tilde{\gamma}$.\

We do this previous idea along the orbits $\gamma_{p_1}$ and $\gamma_{p_2}$ and, without loss, we suppose that $\gamma_{p_1}^2$ and $\gamma_{p_2}^2$ lift to the Reeb orbits that we used in sphere case, now denoted by $\tilde{\gamma}_{p_1}$ and $\tilde{\gamma}_{p_2}$, respectively. In this case, the difference between the previous trivialization, now denoted by $\tau_\theta$, and this new trivialization $\tilde{\tau}$ is $\tilde{\tau} - \tau_\theta = 1$. Therefore, we can use equations \eqref{czmuda}, \eqref{ctaumuda} and \eqref{qtaumuda} to compute the new terms that appear in ECH index with this new trivialization. First, \eqref{czmuda} and \eqref{CZesfera} yield
\begin{equation}\label{CZesfera2}
CZ_{\tilde{\tau}}(\tilde{\gamma}_{p_1}^k) = 4k-1 \quad \text{and} \quad CZ_{\tilde{\tau}}(\tilde{\gamma}_{p_2}^k) = 4k+1.
\end{equation}
Now let $\tilde{Z}_i$ be the unique class in $H_2(S^*S^2,\tilde{\gamma}_{p_i}^2,\emptyset)$. Recalling that $c_{\tau_\theta}(\tilde{Z}_i) = 0$, equation \eqref{ctaumuda} gives us
\begin{equation}\label{ctauesfera2}
c_{\tilde{\tau}}(\tilde{Z}_i) = -2,
\end{equation}
and since $Q_{\tau_\theta}(\tilde{Z}_i) = sl(\tilde{\gamma_i}^2) = -2$, equations \eqref{qtauquadratic} and \eqref{qtaumuda} yield
\begin{equation}\label{qtauesfera2}
Q_{\tilde{\tau}}(\tilde{Z}_i) = -6.
\end{equation}
Therefore, we can compute these terms in $\R P^2$ case noting that if $\gamma$ is a Reeb orbit of $(Y,\lambda)$ such that $\gamma^2$ lifts to a Reeb orbit $\tilde{\gamma}$ on $(S^*S^2,\lambda)$, then the covering $\tilde{q}$ induces an isomorphism between $\tilde{\xi}|_{\tilde{\gamma}}$ and $\xi|_{\gamma^2}$. With this in mind, we have
$$\mu^\tau(\gamma^2) = \mu^{\tilde{\tau}}(\tilde{\gamma}) = 4,$$
and thus, $\mu^\tau(\gamma^k) = 2k$. Thus,
\begin{equation}\label{czrp2}
CZ_\tau(\gamma_{p_1}^k) = 2k-1 \quad \text{and} \quad CZ_\tau(\gamma_{p_2}^k) = 2k+1,
\end{equation}
by equations in \eqref{CZ}. In particular, the orbits $\gamma_{p_1}$ and $\gamma_{p_2}$ are elliptic, and hence, the filtered ECH homology $ECH_*^T(Y,\lambda_{\varepsilon(T)},J,\Gamma)$ is, for any almost complex structure $J$, isomorphic to its chain complex $ECC_*^T(Y,\lambda_{\varepsilon(T)},J,\Gamma)$ which is generated by orbit sets $\{(\gamma_{p_1},m_1),(\gamma_{p_2},m_2)\}$ in homology class $\Gamma$ such that
$$\pi(m_1+m_2)(1+\varepsilon \mathfrak{q}^*f) \leq T.$$  Moreover, if $Z_i$ is the unique class in $H_2(Y,\gamma_{p_i}^4,\emptyset)$, equations \eqref{ctauesfera2} and \eqref{qtauesfera2} yield
\begin{equation}\label{ctaurp2}
c_\tau(Z_i) = c_{\tilde{\tau}}(\tilde{Z}_i) = -2,
\end{equation}
and
\begin{equation}\label{qtaurp2}
Q_\tau(Z_i) = 2 Q_{\tilde{\tau}}(\tilde{Z_i}) = -12.
\end{equation}
Finally, let $\alpha = \{(\gamma_{p_1},m_1),(\gamma_{p_2},m_2)\}$ and $\beta= \{(\gamma_{p_1},n_1),(\gamma_{p_2},n_2)\}$ be two orbit sets, and let $Z \in H_2(Y,\alpha,\beta)$. Doing essentially the same that we did in the proof of Proposition \ref{echindex2} but changing $\gamma_{p_i}^2$ to $\gamma_{p_i}^4$, linearity of $c_\tau$ and quadratic property of $Q_\tau$, i.e., equations \eqref{ctaulinear} and \eqref{qtauquadratic}, together \eqref{czrp2}, \eqref{ctaurp2} and \eqref{qtaurp2} lead us to
\begin{equation*}
CZ_\tau(\alpha) = \sum_{j=1}^{m_1} 2k-1 + \sum_{j=1}^{m_2} 2k+1 = m_1^2 + m_2^2 - 2m_2,
\end{equation*}

\begin{equation*}
CZ_\tau(\beta) = \sum_{j=1}^{n_1} 2k-1 + \sum_{j=1}^{n_2} 2k+1 = n_1^2 + n_2^2 - 2n_2,
\end{equation*}

\begin{equation*}
c_\tau(Z) = -\frac{1}{2}(m_1+m_2 -n_1-n_2)
\end{equation*}
and
\begin{equation*}
Q_\tau(Z) = -\frac{3}{4}m_1^2 -\frac{3}{4}m_2^2 + \frac{3}{4}n_1^2 + \frac{3}{4}n_2^2 +\frac{1}{2}m_1m_2  -\frac{1}{2}n_1n_2,
\end{equation*}
Putting last equations together, we compute the ECH index $I(\alpha,\beta)$.

\begin{prop}\label{ECHindexrp2}
For any $T$ and $\Gamma$, given two orbit sets $\alpha = \{(\gamma_{p_1},m_1),(\gamma_{p_2},m_2)\}$ and $\beta= \{(\gamma_{p_1},n_1),(\gamma_{p_2},n_2)\}$, the ECH index in $ECH_*^T(S^*\R P^2,\lambda_{\varepsilon(T)},\Gamma)$ is given by
$$I(\alpha,\beta) = \frac{1}{4}(m_1+m_2-n_1-n_2)^2 + \frac{1}{2}(m_1+m_2-n_1-n_2)(n_1+n_2) -\frac{m_1}{2} + \frac{3m_2}{2} + \frac{n_1}{2} - \frac{3n_2}{2}.$$
\end{prop}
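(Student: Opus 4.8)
The plan is to compute $I(\alpha,\beta) = c_\tau(Z) + Q_\tau(Z) + CZ_\tau(\alpha) - CZ_\tau(\beta)$ by assembling the four pieces that the text has already set up just above the statement. The Conley--Zehnder contributions $CZ_\tau(\alpha)$ and $CZ_\tau(\beta)$ are already evaluated in closed form using \eqref{czrp2}, giving $m_1^2 + m_2^2 - 2m_2$ and $n_1^2 + n_2^2 - 2n_2$ respectively, via the elementary identities $\sum_{j=1}^a (2j-1) = a^2$ and $\sum_{j=1}^a (2j+1) = a^2 + 2a$. Likewise $c_\tau(Z)$ and $Q_\tau(Z)$ have been recorded. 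So the proof is essentially: substitute these four expressions into \eqref{echindex} and simplify.

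The only genuine work, and the step I would carry out most carefully, is deriving the stated $c_\tau(Z)$ and $Q_\tau(Z)$, mirroring the argument of Proposition \ref{echindex2} but with $\gamma_{p_i}^2$ replaced by $\gamma_{p_i}^4$ because in the $\R P^2$ case it is the \emph{fourth} power (not the square) of a simple orbit that lifts to a simple Hopf-type orbit on $S^*S^2$. Concretely, I would write a $2$-chain $S$ with $\partial S = (m_1-n_1)\gamma_{p_1} + (m_2-n_2)\gamma_{p_2}$, pass to $4Z$ so that its relevant piece decomposes into trivial cylinders $C_{\gamma_{p_i}^4}$ plus a class $S_0 \in H_2(Y,\gamma_{p_1}^{4(m_1-n_1)}\gamma_{p_2}^{4(m_2-n_2)},\emptyset)$, and then expand $Q_\tau(4Z)$ using the quadratic property \eqref{qtauquadratic}. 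The building blocks are $Q_\tau(Z_i) = -12$ from \eqref{qtaurp2} and the cross term $Q_\tau(Z_1,Z_2)$, which I would obtain from the corresponding linking number on $S^*S^2$ scaled appropriately under the covering. Linearity of $c_\tau$ in \eqref{ctaulinear} together with $c_\tau(Z_i) = -2$ from \eqref{ctaurp2} gives $c_\tau(Z) = -\tfrac{1}{2}(m_1+m_2-n_1-n_2)$ after dividing by the factor of $4$.

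The main obstacle, and the place where an arithmetic slip is most likely, is keeping the factors of $4$ and the homology-class bookkeeping consistent: because one divides $Q_\tau(4Z)$ by $16$ while the generators lift under a different covering degree than in the sphere case, the coefficients $-\tfrac{3}{4}$ and $\tfrac{1}{2}$ appearing in $Q_\tau(Z)$ must be tracked with care against the values $-12$ and the cross term. I would double-check these by specializing to $\beta = \emptyset$ and confirming that the resulting grading $|\alpha| = I(\alpha,\emptyset)$ is consistent with the expected ECH of $L(4,1)$ and, in particular, that it takes integer values on all admissible orbit sets in each class $\Gamma$ modulo $4$.

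Finally, with all four terms in hand I would collect them: the quadratic parts of $CZ_\tau(\alpha)-CZ_\tau(\beta)$ combine with $Q_\tau(Z)$ to produce the terms $\tfrac{1}{4}(m_1+m_2-n_1-n_2)^2 + \tfrac{1}{2}(m_1+m_2-n_1-n_2)(n_1+n_2)$, after completing the square in the symmetric variables $m_1+m_2$ and $n_1+n_2$, while the linear remainder assembles into $-\tfrac{m_1}{2} + \tfrac{3m_2}{2} + \tfrac{n_1}{2} - \tfrac{3n_2}{2}$. A sanity check against the identity $I(\alpha,\beta) = I(\alpha,\emptyset) - I(\beta,\emptyset)$ modulo the index ambiguity, or against the additivity property in Proposition \ref{ECHind}, would confirm the final algebra.
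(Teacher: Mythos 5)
Your proposal follows essentially the same route as the paper: the text's computation leading up to the Proposition is exactly the mirror of Proposition \ref{echindex2} with $\gamma_{p_i}^2$ replaced by $\gamma_{p_i}^4$, i.e.\ representing $4Z$ by $S_0 + n_1 C_{\gamma_{p_1}^4} + n_2 C_{\gamma_{p_2}^4}$ with $S_0 \in H_2(Y,\gamma_{p_1}^{4(m_1-n_1)}\gamma_{p_2}^{4(m_2-n_2)},\emptyset)$, expanding via \eqref{ctaulinear} and \eqref{qtauquadratic} with the lifted building blocks $c_\tau(Z_i)=-2$, $Q_\tau(Z_i)=-12$, and the cross term $Q_\tau(Z_1,Z_2)=4$ (twice the linking number $lk(\tilde{\gamma}_{p_1}^2,\tilde{\gamma}_{p_2}^2)=2$ on $S^*S^2$, which is insensitive to the trivialization change since the two classes have boundary on disjoint orbits), and then substituting into \eqref{echindex}. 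One correction: the value $CZ_\tau(\alpha)=m_1^2+m_2^2-2m_2$ you record (copied from a sign typo in the text) contradicts your own identity $\sum_{j=1}^{a}(2j+1)=a^2+2a$; the correct value is $m_1^2+m_2^2+2m_2$, and it is this sign that yields the linear terms $-\tfrac{m_1}{2}+\tfrac{3m_2}{2}+\tfrac{n_1}{2}-\tfrac{3n_2}{2}$ of the stated formula, whereas $-2m_2$ would give $-\tfrac{m_1}{2}-\tfrac{5m_2}{2}+\tfrac{n_1}{2}+\tfrac{5n_2}{2}$ and contradict the Proposition. Also note that your final sanity check $I(\alpha,\beta)=I(\alpha,\emptyset)-I(\beta,\emptyset)$ via additivity only makes sense when $\Gamma=0$, since $H_2(Y,\beta,\emptyset)$ is nonempty only if $n_1+n_2\equiv 0 \bmod 4$; your other check, integrality of the grading on each class $\Gamma$, is the one that works in general.
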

Here we can almost repeat Remark \ref{rmkz} changing $L(2,1)$ to $L(4,1)$. Moreover, the ECH index of Proposition \ref{ECHindexrp2} is equal to the index in \cite{nelson2020embedded} in the case of the lens space $L(4,1)$ as a prequantization bundle over the sphere $S^2$. Moreover, versions of Propositions \ref{dirlim} and \ref{limit}, follow again from Nelson-Weiler work. Therefore, we have
$$\lim_{T\to\infty}ECH_*^T(Y, \lambda_\varepsilon, \Gamma) = ECH_*(Y,\lambda, \Gamma),$$
for $\varepsilon = \varepsilon(T)$ as is Lemma \ref{bourg} and for each $\Gamma \in H_1(Y) \cong \Z_4$. Furthermore, the direct limit $\lim_{T\to\infty}ECH_*^T(Y, \lambda_\varepsilon, \Gamma)$ is the homology of the chain complex generated by the orbit sets $\{(\gamma_{p_1},m_1),(\gamma_{p_2},m_2)\}$ such that $m_1+m_2 \equiv \Gamma \mod 4$. Since $\gamma_{p_1}$ and $\gamma_{p_2}$ are elliptic, the differential vanishes, and hence, the bijection found in \cite[Theorem 7.6]{nelson2020embedded} for Euler number $e=-4$ proves the following Proposition.

\begin{prop}\label{echrp2}
Let $S^*\R P^2$ be the unit cotangent bundle of the real projective plane with the standard metric, $\lambda$ be the tautological form, and $\xi = \ker \lambda$. Then the $\Z$-graded $ECH$ of it is given by
\begin{align*}
ECH_*(S^* \R P^2,\xi,\Gamma) = \begin{cases}
\Z, & \text{if} \ * \in 2\Z_{\geq 0} \\
0, & \text{else}
\end{cases}
\end{align*}
for each $\Gamma \in H_1(S^* \R P^2)$.
\end{prop}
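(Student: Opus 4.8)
The plan is to mirror the proof of Proposition~\ref{echsphere}. The key structural input is already in place: by the Conley--Zehnder computation \eqref{czrp2} both generating orbits $\gamma_{p_1}$ and $\gamma_{p_2}$ are elliptic, so every chain complex generator $\{(\gamma_{p_1},m_1),(\gamma_{p_2},m_2)\}$ contributes only odd Conley--Zehnder indices; by the index parity property of Proposition~\ref{ECHind}, $I(\alpha,\beta)$ is always even, and hence the ECH differential vanishes for every admissible $J$. Combined with the direct limit identification $\lim_{T\to\infty}ECH_*^T(Y,\lambda_\varepsilon,\Gamma) = ECH_*(Y,\lambda,\Gamma)$ recorded above, this reduces the computation of $ECH_*(S^*\R P^2,\xi,\Gamma)$ to counting the chain complex generators by their grading. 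Thus it suffices to show that for each fixed $\Gamma \in \Z_4 \cong H_1(Y)$, the ECH index sets up a bijection between the generators in class $\Gamma$ and the nonnegative even integers $2\Z_{\geq 0}$.

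First I would fix the absolute $\Z$-grading. Since $Y \cong L(4,1)$ has $H_2(Y)=0$, the class $c_1(\xi) + 2PD(\Gamma)$ is torsion in $H^2(Y;\Z)$, so by the discussion in \S\ref{grading} the relative grading refines to an absolute $\Z$-grading once we pick, for each residue class $\Gamma$, a generator $\beta_\Gamma$ of minimal action and set $|\alpha| = I(\alpha,\beta_\Gamma)$. Specializing Proposition~\ref{ECHindexrp2} to $\beta = \beta_\Gamma$ then yields an explicit quadratic expression for $|\alpha|$ in $(m_1,m_2)$ subject to the constraint $m_1+m_2 \equiv \Gamma \bmod 4$.

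Next I would verify the bijection. The cleanest route is to observe that the index formula of Proposition~\ref{ECHindexrp2} agrees, term by term, with the ECH index computed by Nelson--Weiler for the prequantization bundle over $S^2$ with Euler number $e=-4$, namely $L(4,1)$; the required bijection between generators and $2\Z_{\geq 0}$ is then exactly the one established in \cite[Theorem 7.6]{nelson2020embedded}. Alternatively one can check the bijection by hand: in each of the four classes $\Gamma \in \{0,1,2,3\}$, order the generators first by $m_1+m_2$ and, within each level set $m_1+m_2=\text{const}$, by $m_2$; substituting into the specialized formula shows that the gradings increase in steps of $2$ and that consecutive level sets dovetail without gaps or repetitions.

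The main obstacle I anticipate is purely bookkeeping but genuinely delicate: the coefficients $\tfrac14$, $\tfrac12$ and $\tfrac32$ in Proposition~\ref{ECHindexrp2} make it non-obvious that $|\alpha|$ is even-integer valued, and one must use the congruence $m_1+m_2 \equiv \Gamma \bmod 4$ to clear denominators correctly in each of the four cases. Ensuring that the resulting map onto $2\Z_{\geq 0}$ is simultaneously injective and surjective across all residue classes is where the argument requires care, and it is precisely this combinatorial matching that is absorbed into the citation of \cite[Theorem 7.6]{nelson2020embedded} with $e=-4$.
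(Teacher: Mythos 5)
Your proposal is correct and follows essentially the same route as the paper: both orbits are elliptic by \eqref{czrp2}, so index parity forces the differential to vanish, the direct limit identifies $ECH_*(Y,\lambda,\Gamma)$ with the homology of the limiting chain complex, and the grading bijection with $2\Z_{\geq 0}$ is delegated to \cite[Theorem 7.6]{nelson2020embedded} for Euler number $e=-4$ via the agreement of Proposition~\ref{ECHindexrp2} with the Nelson--Weiler index for $L(4,1)$. Your extra care in anchoring the absolute $\Z$-grading with a reference generator $\beta_\Gamma$ in each class $\Gamma$ (since $\emptyset$ only lies in class $0$) is a welcome elaboration of a point the paper leaves implicit, but it does not change the argument.
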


\subsection{U map and ECH spectrum}
Now we do the analogue of \S \ref{umapspec} for $Y = S^*\R P^2$. We already know that our ECH is given as the limit $\lim_{T\to\infty}ECH_*^T(Y, \lambda_\varepsilon, \Gamma)$ and it has exactly one generator of grading $2k$ for each $k \in \Z_{\geq 0}$, and vanishes else. Let $\zeta_k$ denote the generator of grading $2k$ of $\lim_{T\to\infty}ECH_*^T(Y, \lambda_\varepsilon, 0)$. Note that it follows from Proposition \ref{ECHindexrp2} that the grading $|\alpha| = I(\alpha,\emptyset)$ is
$$|\alpha| = \frac{1}{4}(m_1+m_2)^2 -\frac{m_1}{2} + \frac{3m_2}{2},$$
and thus, this together \eqref{ahomologia2}, ensure that the sequence of generators in homology class $\Gamma = 0$ ordered by (increasing) grading $|\alpha| = I(\alpha,\emptyset)$, and starting at index $0$ is given by

\begin{equation}\label{ordergen2}
1, \gamma_{p_1}^4, \gamma_{p_1}^3\gamma_{p_2}, \gamma_{p_1}^2 \gamma_{p_2}^2, \gamma_{p_1}\gamma_{p_2}^3, \gamma_{p_2}^4, \gamma_{p_1}^8, \gamma_{p_1}^7\gamma_{p_2}, \gamma_{p_1}^6\gamma_{p_2}^2, \gamma_{p_1}^5\gamma_{p_2}^3,\ldots
\end{equation}

\begin{prop}\label{Umap3}
For any almost complex structure $J$, the $U$ map on the direct limit is given by
$$U\zeta_k = \zeta_{k-1}, \quad k \in \Z_{\geq 1}.$$
\begin{proof}
It follows again from the $U$ map on Seiberg-Witten Floer Cohomology of lens spaces found in \cite{kronheimer2007monopoles,kronheimer2007monopoles}. For the sake of completeness, we sketch another approach here. We assert that the following holds.
\begin{enumerate}[label=\alph*)]
\item $U(\gamma_{p_1}^i \gamma_{p_2}^j) = \gamma_{p_1}^{i+1}\gamma_{p_2}^{j-1}$, if $j >0$. \label{(a2)}
\item $U(\gamma_{p_1}^i) = \gamma_{p_2}^{i-4}$, if $i>0$. \label{(b2)}
\end{enumerate}
The proof of Proposition \ref{Umap3} follows then from assertions \ref{(a2)} and \ref{(b2)} and the generators grading order in \eqref{ordergen2}. The independence of $J$ follows from the same argument in \cite[Proposition 4.1]{hutchings2014lecture}. Let $i+j >0$ and let $\mathcal{C}$ be a $J$-holomorphic current counted in $U(\gamma_{p_1}^i\gamma_{p_2}^j)$. Once more, we can write $\mathcal{C} = C_0 \sqcup C_2$, where $C_2$ is embedded with $\text{ind}(C_2) = I(C_2) = 2$ and that is the only part that matters in the counting. Again, partition conditions guarantee that $C_2$ has at most one positive end at a cover of $\gamma_{p_1}$, all negative ends of $C_2$ at covers of $\gamma_{p_1}$ have multiplicity $1$, and the opposite for $\gamma_{p_2}$.
\

Using equation $\text{ind}(C_2) = 2$, by computations very close to those in the proof of Lemma \ref{Umap2}, it turns out that we need to count $J$-holomorphic cylinders $C$ with a positive end at $\gamma_{p_1}^i$ and a negative end at $\gamma_{p_2}^{i-4}$, planes that bound $\gamma_{p_1}^4$, and cylinders from $\gamma_{p_2}$ to $\gamma_{p_1}$, all of these passing through a fixed point $(0,z) \in \R \times Y$ such that $z$ is not on any Reeb orbit. Similarly to the proof of Lemma \ref{Umap2}, a $J$-holomorphic cylinder with positive end at $\gamma_{p_2}$ and a negative end at $\gamma_{p_1}$ passing through $(0,z)$ corresponds to a negative gradient flow line of $f \colon \SS = S^2 \to \R$ from $p_2$ to $p_1$ passing through a point in $S^2 \backslash \{p_1,p_2\}$, and vice-versa. In particular, there is a unique cylinder with this properties, and thus, we obtain part \ref{(a2)}. \

To conclude \ref{(b2)}, we note that an symplectization admissible almost complex structure $\tilde{J}$ in $\R \times S^*S^2 \cong T^*S^2\backslash S^2$ induces an symplectization admissible almost complex structure $J$ in $\R \times S^*\R P^2$ by means of the covering $\tilde{q}$. Furthermore, a $J$-holomorphic cylinder between\footnote{We recall that since $\gamma_{p_1}^i$ is nullhomologous, $i$ is a multiple of $4$.} $\gamma_{p_1}^i$ and $\gamma_{p_2}^{i-4}$ lifts to a $\tilde{J}$-holomorphic cylinder between $\tilde{\gamma}_{p_1}^{i/2}$ and $\tilde{\gamma}^{i/2 - 2}$, while a $J$-holomorphic plane bounding $\gamma_{p_1}^4$ lifts to a $\tilde{J}$-holomorphic plane bounding $\gamma_{p_1}^2$ in $\R \times S^*S^2$. Therefore, our countings follow from the sphere case.
\end{proof}
\end{prop}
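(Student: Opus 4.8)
The plan is to deduce the formula $U\zeta_k=\zeta_{k-1}$ from the two assertions \ref{(a2)} and \ref{(b2)} together with the grading order \eqref{ordergen2}: since each grading $2k$ houses a single generator, it suffices to produce, for every $k$, exactly one (mod $2$) embedded ECH index-$2$ curve through a generic base point joining the generator in degree $2k$ to the one in degree $2k-2$. There are two routes. The quick one is to invoke Taubes's isomorphism and $Y\cong L(4,1)$ to identify $U$ with the Seiberg--Witten $U$ map on the lens space, computed in \cite{kronheimer2007monopoles}. For a self-contained argument I would instead count holomorphic curves directly, first unwinding the definition of $U$ as a count of index-$2$ $J$-holomorphic currents through a fixed $(0,z)\in\R\times Y$ with $z$ off every Reeb orbit, and using Proposition \ref{lowind}(iii) to split any such current as $\mathcal C_0\sqcup C_2$ with $C_2$ embedded and $\mathrm{ind}(C_2)=I(C_2)=2$; the trivial-cylinder part $\mathcal C_0$ is discarded, and the independence of $J$ follows from the argument in \cite[Proposition 4.1]{hutchings2014lecture}.

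Next I would constrain the ends of $C_2$ by the ECH partition conditions. As $\gamma_{p_1}$ and $\gamma_{p_2}$ are elliptic, choosing a suitable representative in the direct system makes their rotation angles behave as in the sphere case, so the partition conditions permit at most one positive end at a cover of $\gamma_{p_1}$, force all negative ends over $\gamma_{p_1}$ to be simply covered, and impose the mirror conditions at $\gamma_{p_2}$. Substituting these into the Fredholm index formula \eqref{fredholm} with $\mathrm{ind}(C_2)=2$ and the Conley--Zehnder indices \eqref{czrp2}, and solving for the Euler characteristic exactly as in the proof of Lemma \ref{Umap2}, I expect every configuration to force negative genus (hence to be excluded) except three surviving types: a cylinder from $\gamma_{p_2}$ to $\gamma_{p_1}$, which yields \ref{(a2)}; a plane bounding $\gamma_{p_1}^4$; and a cylinder from $\gamma_{p_1}^i$ to $\gamma_{p_2}^{i-4}$, the latter two giving \ref{(b2)}.

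It then remains to count these families mod $2$. For the cylinder from $\gamma_{p_2}$ to $\gamma_{p_1}$ I would appeal to the Morse--Bott correspondence of \cite{bourgeois2002morse,moreno}: such cylinders through $(0,z)$ correspond bijectively to negative gradient flow lines of $f\colon\SS=S^2\to\R$ from $p_2$ to $p_1$ passing through the image of $z$ in $S^2\setminus\{p_1,p_2\}$, of which there is exactly one, proving \ref{(a2)}. For the plane and the long cylinder I would pass to the double cover $\tilde q\colon S^*S^2\to S^*\R P^2$: a symplectization-admissible $\tilde J$ on $\R\times S^*S^2$ is compatible with $J$ on $\R\times S^*\R P^2$, and a $J$-holomorphic plane bounding $\gamma_{p_1}^4$ (resp.\ cylinder from $\gamma_{p_1}^i$ to $\gamma_{p_2}^{i-4}$) lifts to a $\tilde J$-holomorphic plane bounding $\gamma_{p_1}^2$ (resp.\ cylinder from $\tilde\gamma_{p_1}^{i/2}$ to $\tilde\gamma_{p_2}^{i/2-2}$). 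This reduces the count to the sphere case already settled in Lemma \ref{Umap2}, where Riemann--Roch gives the value $1$, establishing \ref{(b2)}.

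The main obstacle I anticipate is the lifting argument across $\tilde q$. One must check that the chosen almost complex structures correspond so that the lift of a $J$-holomorphic curve is genuinely $\tilde J$-holomorphic with precisely the stated ends, that the homological constraint forcing $i$ to be a multiple of $4$ (so that $\gamma_{p_1}^i$ is nullhomologous) makes the lift well defined, and that descending to the cover produces an honest bijection of curve counts through the base point rather than introducing a factor of $2$ or spurious additional lifts. Getting this bookkeeping of multiplicities and base points right under the covering is the delicate point; the remaining steps parallel the sphere computation of Lemma \ref{Umap2}.
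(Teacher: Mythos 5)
Your proposal is correct and follows essentially the same route as the paper's proof: decompose the index-$2$ current via Proposition \ref{lowind}, constrain ends by the partition conditions, rule out all but the three configurations by the Fredholm index, count the short cylinder via the Morse--Bott correspondence with gradient flow lines of $f$, and reduce the plane and long cylinder counts to Lemma \ref{Umap2} through the double cover $\tilde{q}$. The delicate lifting bookkeeping you flag (well-definedness of lifts given $i\equiv 0 \bmod 4$ and the bijection of counts through the base point) is exactly the point the paper handles only implicitly, so your caution there is well placed.
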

Finally, as we did for $S^*S^2$, the ECH spectrum follows from Proposition \ref{Umap3}, the generators grading order in \eqref{ordergen2}, and the fact that the action of our unperturbed contact form is given by
$$\mathcal{A}(\gamma_{p_1}^{m_1}\gamma_{p_2}^{m_2}) = \pi(m_1+m_2).$$
Therefore
\begin{eqnarray*}
c_k(Y,\lambda) &=& \lim_{\varepsilon\to 0} c_k(Y,(1+\varepsilon\mathfrak{q}^*f)\lambda) \\ &=& \pi M_4(N(1,1))_k,
\end{eqnarray*}
as we stated in part \ref{partb} of Theorem \ref{thm:capacities}.

\section{Integrable systems and symplectomorphisms}
In this section, we prove Theorem \ref{thm:symplecto}.
\begin{proof}[Proof of Theorem \ref{thm:symplecto}]
It is clearly enough to prove the statements for $q=(0,0,1)$ and $\Sigma=\{(x_1,x_2,x_3)\in S^2\mid x_3 <0\}$.
Let $\phi:S^2\setminus\{q\}\to\R^2$ denote the stereographic projection. Then $\phi$ induces a symplectomorphism $\bar{\phi}:T^*S^2\to T^*\R^2$ by $\bar{\phi}(\xx,\yy)=(\phi(\xx),(\phi^{-1})^*(\yy))$. It follows from a simple calculation that
\[\begin{aligned}
W_\infty&:=\bar{\phi}(D^*(S^2\setminus\{q\})=\left\{(\xx,\yy)\in T^*\R^2\mid |\yy|^2<\frac{4}{(1+|\xx|^2)^2}\right\},\\
W_1&:=\bar{\phi}(D^*\Sigma)=\left\{(\xx,\yy)\in T^*\R^2\mid|\xx|<1\text{ and } |\yy|^2<\frac{4}{(1+|\xx|^2)^2}\right\}.\end{aligned}\]
We will show that $W_\infty$ and $W_1$ are symplectomorphic to $P(2\pi,2\pi)$ and $B(2\pi)$, respectively. We will use techniques from integrable systems, similarly to \cite{ramossepe,or_lpsum,bidisk}.

Fix $\epsilon,C>0$ such that $\epsilon<1$ and $C\ge 1$. For $|\xx|^2<C$,
\[H^{\epsilon,C}(\xx,\yy)=\frac{|\yy|^2(1+|\xx|^2)^2}{4}+\frac{\epsilon}{C-|\xx|^2},\qquad J(\xx,\yy)=\xx\times\yy.\]
Here $\xx\times\yy$ denotes the usual angular momentum, i.e., $\xx\times\yy=x_1y_2-x_2y_1$. Now we define a domain
\begin{equation}\label{eq:xec}W^{\epsilon,C}=\{(\xx,\yy)\in T^*\R^2\mid |\xx|^2<C\text{ and } H^{\epsilon,C}(\xx,\yy)\le 1\}.\end{equation} Let $F^{\epsilon,C}:=(H^{\epsilon,C},J):X^{\epsilon,C}\to \R^2$ and let
\begin{equation}\label{eq:hmin}h_{min}^{\epsilon,C}(j)=\min\left\{\frac{j^2(1+u)^2}{4u}+\frac{\epsilon}{C-u}\mid u\in(0,C)\right\}.\end{equation}
It follows from a simple calculation that $h_{min}^{\epsilon,C}:\R\to\R$ is a well-defined smooth function.

We now claim the following facts about $F^{\epsilon,C}$:
\begin{enumerate}[label=(\arabic*)]
\item $H^{\epsilon,C}$ and $J$ Poisson commute, i.e., $\{H^{\epsilon,C},J\}=0$,
\item $F^{\epsilon,C}(W^{\epsilon,C})$ consists of the points $(h,j)\in\R^2$ such that $h_{min}^{\epsilon,C}(j)\le h\le 1$, and $h_{min}^{\epsilon,C}(j)= h$ if, and only if, $(h,j)$ is a critical value of $F^{\epsilon,C}$.
\item $(F^{\epsilon,C})^{-1}(h,j)$ is compact and connected for every $(h,j)\in\R^2$.
\end{enumerate}

\begin{proof}[Proof of the Claims]
Assuming $\xx\neq 0$, we let $(r,\theta)$ denote $\xx$ in polar coordinates and let $(p_r,p_\theta)$ be the induced coordinates for $\yy$. It is easy to see that
\[|\xx|=r,\qquad|\yy|^2=p_r^2+\frac{p_\theta^2}{r^2},\qquad\xx\times\yy=p_\theta.\]
So 
\begin{equation}\label{eq:hj_polar}
F^{\epsilon,C}(\xx,\yy)=\left(\frac{1}{4}\left(p_r^2+\frac{p_\theta^2}{r^2}\right)(1+r^2)^2+\frac{\epsilon}{C-r^2},p_\theta\right).\end{equation}
Therefore $\{H^{\epsilon,C},J\}=0$ for $\xx\neq 0$. By continuity $\{H^{\epsilon,C},J\}=0$ for all $(\xx,\yy)\in W^{\epsilon,C}$, proving (1).

Now suppose that $F^{\epsilon,C}(\xx,\yy)=(h,j)$ for some $(\xx,\yy)\in W^{\epsilon,C}$. It follows from \eqref{eq:xec}, \eqref{eq:hmin} and \eqref{eq:hj_polar} that
\begin{equation}\label{eq:hj}1\ge h\ge\frac{j^2}{4r^2}(1+r^2)^2+\frac{\epsilon}{C-r^2}\ge h_{min}^{\epsilon,C}(C).\end{equation}
Conversely, if $h_{min}^{\epsilon,C}(j)\le h\le 1$, then there exists $r_0\in(0,\sqrt{C})$ such that \begin{equation}\label{eq:r0}\frac{j^2(1+r_0^2)^2}{4r_0^2}+\frac{\epsilon}{C-r_0^2}=h.\end{equation}
Let $\xx=(r_0,0)$ and $\yy=(0,j/r_0)$. It follows from a simple calculation that $F^{\epsilon,C}(\xx,\yy)=(h,j)$. To determine the critical set of $F^{\epsilon,C}$, we compute $dH_{(\xx,\yy)}$ and $dJ_{(\xx,\yy)}$ in polar coordinates.
\begin{equation}\label{eq:hj2}
\begin{aligned}
dH^{\epsilon,C}_{(\xx,\yy)}&=\frac{p_r(1+r^2)^2}{2}dp_r+\left(\frac{d}{dr}\left(\frac{j^2(1+r^2)^2}{4r^2}+\frac{\epsilon}{C-r^2}\right)+p_r^2r\right)dr\\&+\frac{j(1+r^2)^2}{2r^2}dp_\theta,\\
dJ_{(\xx,\yy)}&=dp_\theta.
\end{aligned}
\end{equation}
Let $(\xx,\yy)$ be a critical point of $F^{\epsilon,C}$ and let $(h,j)=F^{\epsilon,C}(\xx,\yy)$. It follows from \eqref{eq:hj2} that $p_r=0$ and that $r^2$ is the minimizer in \eqref{eq:hmin}. We conclude that $h=h_{min}^{\epsilon,C}(j)$. Conversely, if $(h,j)=F^{\epsilon,C}(\xx,\yy)$ and $h=h_{min}^{\epsilon,C}(j)$, then it follows from \eqref{eq:hj} that $p_r=0$ and that $r^2$ is the minimizer in \eqref{eq:hmin}. So \eqref{eq:hj2} implies that $dH_{(\xx,\yy)}$ and $dJ_{(\xx,\yy)}$ are proportional. Therefore, $(\xx,\yy)$ is a critical point of $F^{\epsilon,C}$, proving (2).

Since $W^{\epsilon,C}$ is bounded, it is clear that $F^{\epsilon,C}(h,j)$ is compact. Now to show it is connected, let $(\xx,\yy)\in F^{\epsilon,C}(h,j)$. Let $r_{min}^{\epsilon,C}(h,j)\le r_{max}^{\epsilon,C}(h,j)$ be the solutions\footnote{It is a simple calculation to verify that \eqref{eq:r0} has exactly two solutions when $h_{min}^{\epsilon,C}(j)<h$ and one solution when $h_{min}^{\epsilon,C}=h$.} of \eqref{eq:r0} in $(0,C)$. It follows from \eqref{eq:hj2} that flowing $(\xx,\yy)$ along the Hamiltonian vector field $X_{H^{\epsilon,C}}$ the $r$-coordinate will eventually reach a point $(\widetilde{\xx},\widetilde{\yy})$ such that $|\widetilde{\xx}|=r_{max}^{\epsilon,C}(h,j)$ and whose $p_r$-coordinate is $0$. From a straight-forward calculation we obtain that \[\widetilde{\yy}=\frac{j}{r_{max}^{\epsilon,C}(h,j)^2}i\widetilde{\xx}.\]
After applying the flow of $X_J$ for time $t=-\arg(\xx)$, we reach the point \begin{equation}\label{eq:p0}p_0:=\left(\left(r_{max}^{\epsilon,C}(h,j),0\right),\left(0,\frac{j}{r_{max}^{\epsilon,C}(h,j)}\right)\right).\end{equation} Since $(\xx,\yy)$ was arbitray, we can connect $p_0$ to any point in $(F^{\epsilon,C})^{-1}(h,j)$, proving (3).
\end{proof}

Now let \[\widehat{W}^{\epsilon,C}=\{(\xx,\yy)\in X^{\epsilon,C}\mid 1>H^{\epsilon,C}(\xx,\yy)>h_{min}^{\epsilon,C}(J(\xx,\yy))\}.\]
As in \cite{ramossepe,or_lpsum}, and we now apply the classical Arnold-Liouville theorem to conclude that $\widehat{W}^{\epsilon,C}$ admits a Hamiltonian toric action. In other words there exist an open set $\widehat{\Omega}^{\epsilon,C}\subset\R^2$, a symplectomorphism $\Phi^{\epsilon,C}$ and a diffeomorphism $\varphi^{\epsilon,C}$ such that the following diagram commutes.
\[
\xymatrix{
 \quad \widehat{W}^{\epsilon,C}\quad\ar[d]^{F^{\epsilon,C}} \ar[r]^{\Phi^{\epsilon,C}} & \;\widehat{\Omega}^{\epsilon,C} \times\mathbb{T}^n \ar[d]^{\text{pr}_1}\; \\
F^{\epsilon,C}(\widehat{W}^{\epsilon,C})\ar[r]^{\quad\varphi^{\epsilon,C}}          & \quad \widehat{\Omega}^{\epsilon,C} \quad }
\]
Moreover we can compute $\varphi^{\epsilon,C}$ as follows. We fix two smooth families of simple curves $\{\gamma_1^{(h,j)},\gamma_2^{(h,j)}\}$ in $(F^{\epsilon,C})^{-1}(h,j)$ generating its first homology and a primitive $\lambda$ of $\omega_0$. So we can choose $\varphi^{\epsilon,C}$ to be the function
\[\varphi^{\epsilon,C}(h,j)=\left(\int_{\gamma_1^{(h,j)}}\lambda,\int_{\gamma_2^{(h,j)}}\lambda\right).\]
In our case, we take $\lambda$ to be the standard Liouville form in $T^*\R^2$ and we define $\gamma_1^{(h,j)}$ and $\gamma_2^{(h,j)}$ as follows. For $(h,j)$, let $r_{max}^{\epsilon,C}(h,j)$ be the largest solution of \eqref{eq:r0} as above. Let $\sigma_0$ be the curve following the flow of $X_{H^{\epsilon,C}}$ from the point $p_0$ defined in \eqref{eq:p0} until the next point $(\widetilde{\xx},\widetilde{\yy})$ such that $|\widetilde{\xx}|=r_{max}^{\epsilon,C}(h,j)$. As in the proof of (3) above, $(\widetilde{\xx},\widetilde{\yy})$ can be joined to $p_0$ by the flow of $X_J$. There are two simple ways to do that. We define $\sigma_1$ and $\sigma_2$ to be the curves from $(\widetilde{\xx},\widetilde{\yy})$ to $p_0$ obtained by flowing by $X_J$ by time $-\arg(\widetilde{\xx})\in(-2\pi,0)$ and $2\pi-\arg(\widetilde{\xx})\in(0,2\pi)$, respectively. Finally, for $i=1,2$, we let $\gamma_i^{(h,j)}$ be a smoothening of the composition of $\sigma_0$ with $\sigma_i$. For $j\neq 0$, we can compute $\varphi^{\epsilon,C}(h,j)$ using polar coordinates as follows:
\begin{equation}\label{eq:int}
\int_{\gamma_i^{(h,j)}}\lambda=\int_{\gamma_i^{(h,j)}}p_r\,dr+\int_{\gamma_i^{(h,j)}}p_\theta\,d\theta=\int_{\sigma_0}p_r\,dr+\int_{\gamma_i^{(h,j)}}p_\theta\,d\theta.\end{equation}
We observe that
\begin{equation}\label{eq:int2}
\begin{aligned}
\int_{\gamma_i^{(h,j)}}p_\theta\,d\theta&=\left\{\begin{aligned}2\pi j,&\text{ if }i=2,j>0,\\
0,&\text{ if }i=2,j<0 \text{ or }i=1,j>0,\\
-2\pi j&\text{ if }i=1, j<0.\end{aligned}\right.=:\Theta_i(j)\\
\int_{\sigma_0}p_r\,dr&=2\int_{r_{min}^{\epsilon,C}(h,j)}^{r_{max}^{\epsilon,C}(h,j)}\sqrt{\frac{4}{(1+r^2)^2}\left(h-\frac{\epsilon}{C-r^2}\right)-\frac{j^2}{r^2}}\,dr.
\end{aligned}
\end{equation}
As in the commutative diagram above, we let $\widehat{\Omega}^{\epsilon,C}=\varphi^{\epsilon,C}(F^{\epsilon,C}(\widehat{W}^{\epsilon,C}))$. It is simple to check that $\widehat{\Omega}^{\epsilon,C}\times \mathbb{T}^n$ is symplectomorphic to 
\[X_{\widehat{\Omega}^{\epsilon,C}}=\left\{(z_1,z_2)\in\mathbb{C}^2\mid (\pi|z_1|^2,\pi|z_2|^2)\in \widehat{\Omega}^{\epsilon,C}\right\}.\]
Let $\Omega^{\epsilon,C}$ be the closure of $\widehat{\Omega}^{\epsilon,C}$ in $\R^2_{\ge 0}$. Similarly to \cite{bidisk} and \cite{or_lpsum}, we can use theorems of Eliasson \cite{eliasson} to extend $\Phi^{\epsilon,C}$ to a symplectomorphism $W^{\epsilon,C}\cong X_{\Omega^{\epsilon,C}}$.

We now observe that 
\begin{equation*}
\begin{aligned}
W_\infty=\bigcup_{\epsilon>0} W^{\epsilon,1/\sqrt{\epsilon}},\qquad
W_1=\bigcup_{\epsilon>0}W^{\epsilon,1}.
\end{aligned}
\end{equation*}
It is a simple calculus exercise to verify that 
\[\epsilon_1<\epsilon_2\Rightarrow \Omega^{\epsilon_2,1/\sqrt{\epsilon_2}}\subset \Omega^{\epsilon_1,1/\sqrt{\epsilon_1}} \text{ and }\Omega^{\epsilon_2,1}\subset\Omega^{\epsilon_1,1}.\] Let
\begin{equation}\label{eq:omega}
\Omega_\infty=\bigcup_{\epsilon>0} \Omega^{\epsilon,1/\sqrt{\epsilon}},\qquad
\Omega_1=\bigcup_{\epsilon>0}\Omega^{\epsilon,1}.
\end{equation}
Arguing as in \cite{or_lpsum}, we obtain symplectomorphisms
\begin{equation}\label{eq:w}
\begin{aligned}
W_\infty&\cong\bigcup_{\epsilon>0} X_{\Omega^{\epsilon,1/\sqrt{\epsilon}}}=X_{\Omega_\infty},\\
W_1&\cong\bigcup_{\epsilon>0}X_{\Omega^{\epsilon,1}}=X_{\Omega_1}.
\end{aligned}
\end{equation}
We note that $\Omega_\infty$ and $\Omega_1$ are relatively open in $\R^2_{\ge 0}$. To finish the proof of the proposition, we need to compute $\Omega_\infty$ and $\Omega_1$. 

The boundary of $\Omega^{\epsilon,1/\sqrt{\epsilon}}$ is a curve parametrized by $\varphi^{\epsilon,1/\sqrt{\epsilon}}(1,j)$. So $\Omega_\infty$ is the relatively open set bounded by the curve \[\left(\rho_1(j),\rho_2(j)\right)=\lim_{\epsilon\to 0}\varphi^{\epsilon,1/\sqrt{\epsilon}}(1,j).\] The domain of the parametrization is $[-1,1]$ as $\epsilon\to 0$.  Since this curve is continuous, it suffices to compute $\rho_1(j)$ and $\rho_2(j)$ for $0<j^2<1$.
Let $r(j)\le R(j)$ be the positive roots of $4r^2-j^2(1+r^2)^2=0$. It follows from \eqref{eq:int} and \eqref{eq:int2} that \begin{equation}\label{eq:rho}\rho_i(j)=2\int_{r(j)}^{R(j)}\sqrt{\frac{4}{(1+r^2)^2}-\frac{j^2}{r^2}}\,dr+\Theta_i(j).\end{equation}
We now compute the integral above.
\begin{equation}\label{eq:int3}\begin{aligned}
&2\int_{r(j)}^{R(j)}\sqrt{\frac{4}{(1+r^2)^2}-\frac{j^2}{r^2}}\,dr\\&=2\int_{r(j)}^{R(j)}\frac{4 r^2-j^2(1+r^2)^2}{r(1+r^2)\sqrt{4 r^2-j^2(1+r^2)^2}}\,dr\\
&=\Bigg[2\arcsin\left(\frac{r^2-1}{\sqrt{1-j^2}(r^2+1)}\right)-|j|\arcsin\left(\frac{j^2r^2+j^2-2}{2\sqrt{1-j^2}}\right)\\&\quad+|j|\arcsin\left(\frac{j^2+j^2r^2-2r^2}{2\sqrt{1-j^2}r^2}\right)\Bigg]_{r=r(j)}^{r=R(j)}\\
&=2\left(\arcsin 1-\arcsin(-1)\right)-|j|(\arcsin 1-\arcsin(-1))+|j|(\arcsin(-1)-\arcsin 1)\\&=2\pi-2\pi|j|.
\end{aligned}\end{equation}
Combining \eqref{eq:rho} and \eqref{eq:int3} we obtain
\[\left(\rho_1(j),\rho_2(j)\right)=\left\{\begin{aligned}(2\pi(1-j),2\pi),\quad&\text{if }0<j<1,\\
(2\pi,2\pi(1+j)),\quad&\text{if }-1<j<0.\end{aligned}\right.\]
So $\Omega_\infty=[0,2\pi)\times[0,2\pi)$. Therefore $(D^*(S^2\setminus\{q\})$ is symplectomorphic to $P(2\pi,2\pi)$.

Arguing similarly, $\Omega_1$ is the relatively open set in $\R^2_{\ge 0}$ bounded by the coordinate axes and the curve
\[(\tilde{\rho}_1(j),\tilde{\rho}_2(j))=\lim_{\epsilon\to 0} \varphi^{\epsilon,1}(1,j).\] Again the domain of the parametrization is $[-1,1]$. We now compute $\tilde{\rho}_i(j)$ for $0<|j|<1$.
\[\begin{aligned}\tilde{\rho}_i(j)&=2\int_{r(j)}^{1}\sqrt{\frac{4}{(1+r^2)^2}-\frac{j^2}{r^2}}\,dr+\Theta_i(j)\\&=\Bigg[2\arcsin\left(\frac{r^2-1}{\sqrt{1-j^2}(r^2+1)}\right)-|j|\arcsin\left(\frac{j^2r^2+j^2-2}{2\sqrt{1-j^2}}\right)\\&\quad+|j|\arcsin\left(\frac{j^2+j^2r^2-2r^2}{2\sqrt{1-j^2}r^2}\right)\Bigg]_{r=r(j)}^{r=1}+\Theta_i(j)\\&=\pi-\pi|j|+\Theta_i(j)\\&=\pi+(-1)^i\pi j.\end{aligned}\]
So \[(\widetilde{\rho}_1(j),\widetilde{\rho}_2(j))=(\pi(1-j),\pi(1+j)),\quad \text{for }0<|j|<1.\]
By continuity this expression holds for all $j\in[-1,1]$. Therefore $D^*\Sigma$ is symplectomorphic to $B(2\pi)$.
\end{proof}

\bibliographystyle{alpha}
\bibliography{biblio}
\end{document}